\numberwithin{equation}{subsection}
\newtheorem{propo}{Proposition}[section]
\newtheorem{corol}[propo]{Corollary}
\newtheorem{theor}[propo]{Theorem}
\newtheorem{lemma}[propo]{Lemma}
\theoremstyle{definition}
\theoremstyle{remark}
\newcommand{\A}{\mathcal{A}}
\newcommand{\ad}{\operatorname{ad}}
\newcommand{\Aut}{\operatorname{Aut}}
\newcommand{\Com}{\operatorname{Com}}
\newcommand{\Der}{\operatorname{der}}
\newcommand{\coder}{\operatorname{coder}}
\newcommand{\End}{\operatorname{End}}
\newcommand{\g}{\mathfrak{g}}
\newcommand{\grAlg}{\operatorname{\mathcal{GA}}}
\newcommand{\grCom}{\operatorname{\mathcal{CGA}}}
\newcommand{\Hom}{\operatorname{Hom}}
\newcommand{\id}{\operatorname{id}}
\newcommand{\Ker}{\operatorname{Ker}}
\newcommand{\kk}{\mathbb{K}}
\newcommand{\Mat}{\operatorname{Mat}}
\newcommand{\RR}{\mathbb{R}}
\newcommand{\tr}{\operatorname{tr}}
\newcommand{\ZZ}{\mathbb{Z}}
\newcommand{\bracket}[2]{\left\{#1,#2\right\}}
\newcommand{\double}[2]{\left\{\!\!\left\{#1,#2\right\}\!\!\right\}}
\newcommand{\triple}[3]{\left\{\!\!\left\{#1,#2,#3\right\}\!\!\right\}}
\let\oldmarginpar\marginpar
\renewcommand\marginpar[1]{\oldmarginpar{\footnotesize #1}}
\begin{document}

  \title[{Poisson-Gerstenhaber brackets in representation algebras}]{
  Poisson-Gerstenhaber brackets in representation algebras}

\author[Vladimir Turaev]{Vladimir Turaev}
\address{Vladimir Turaev \newline
\indent   Department of Mathematics \newline
\indent  Indiana University \newline
\indent Bloomington IN47405, USA\newline
\indent $\mathtt{vturaev@yahoo.com}$
}

\begin{abstract}
We introduce cyclic bilinear forms on coalgebras and  use them to generalize Van den Bergh's   Poisson brackets in representation algebras.
\end{abstract}

\maketitle

\section {Introduction}

Van den Bergh \cite{VdB} introduced   double Poisson algebras  as a non-commutative version of Poisson geometry, see also Crawley-Boevey \cite{Cb} for a related approach. A    double Poisson
     algebra is an algebra $A$ (possibly, non-unital) equipped with  a      double   Poisson bracket defined as a linear map $\double{-}{-}:A\otimes A \to A\otimes A$ satisfying certain axioms. A      double Poisson bracket in $A$ induces  a   Poisson structure on the variety ${\rm Rep}(A,N)$ of $N$-dimensional representations of $A$ for all $N\geq 1$.   To give a more  precise formulation, note that the pair $(A,N)$   determines a commutative    algebra  $ A_N  $ and  a homomorphism   $ t_N$ from $ A$ to the algebra $\Mat_N(A_N) $  of $N\times N$ matrices over $A_N$ characterized by the following universal  property: for any commutative algebra~$B$ and any  homomorphism  $s:A\to \Mat_N(B) $, there is a unique homomorphism  $r:A_N\to B$ such that  $s=\Mat_N(r) t_N$ (see \cite{Pr, BW}). One calls  $A_N$   the coordinate algebra of   ${\rm Rep}(A,N)$. Van den Bergh showed that a  double Poisson bracket in $A$ induces a Poisson bracket
in     $ A_N  $  for all $N\geq 1$.

The aim of this paper is to generalize Van den Bergh's   Poisson bracket   in $A_N$   to a wider  class of   commutative  algebras associated with $A$. Fix a  commutative ring $\kk$ which will be the ground ring of all modules, algebras, and coalgebras. For an algebra $A$   and   a coalgebra $M$ (possibly, non-counital), we define a commutative {\it   representation algebra}  $A_M$. It is  characterized by the   universal  property as above with   $\Mat_N (B) $ replaced by  the convolution algebra $ \Hom_\kk(M,B)$. The algebra $A_M$ is described  here  via       generators $\{a_\alpha\, \vert \,  a\in A, \alpha\in M\}$
 and certain relations. For the coalgebra  $ M=(\Mat_N(\kk))^*$ dual to $\Mat_N(\kk)$, we  recover   $ A_N$ because $$\Hom_\kk((\Mat_N(\kk))^*,B)=B\otimes_\kk \Mat_N(\kk)=\Mat_N(B).$$

With each  bilinear form $v:M\otimes M\to \kk$ on a coalgebra $M$ we associate a linear map $\widehat v: M\otimes M \to M\otimes M$. We call $v$ {\it   cyclic} if $\widehat v$ commutes with the permutation of the factors.  Our main result is the following theorem.

 \begin{theor}\label{intro} For a  double Poisson algebra $(A,  \double{-}{-})$  and a  cyclic bilinear form $v$ on a coalgebra $M$, there is a unique   Poisson bracket
 $\bracket{-}{-}_v $ in the representation algebra $A_M$ such that
 for  any $a,b\in A$, $\alpha, \beta \in M$, and any  finite expansions
  $$\double{a}{b}=\sum_i a_i \otimes b_i \in A\otimes A, \quad \widehat v (\alpha\otimes \beta)=\sum_j \alpha_j \otimes \beta_j \in M\otimes M,$$
  we have
\begin{equation*}\label{veryfirst} \bracket{a_\alpha}{b_\beta}_v =\sum_{i,j} \,\,  (a_i)_{\alpha_j}   (b_i)_{\beta_j} \in A_M.\end{equation*}
\end{theor}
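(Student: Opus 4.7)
Uniqueness is immediate: a Poisson bracket on a commutative algebra is a biderivation, and $A_M$ is generated as a $\kk$-algebra by the symbols $\{a_\alpha : a\in A, \alpha\in M\}$, so the stated formula on generators determines the bracket everywhere by the Leibniz rule in each argument.

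For existence, the first step is to declare the bracket on generators by the stated formula and verify it is well-defined modulo the defining relations of $A_M$. These relations express that $(a,\alpha)\mapsto a_\alpha$ is $\kk$-bilinear, that $(ab)_\alpha = \sum_{(\alpha)} a_{\alpha_{(1)}} b_{\alpha_{(2)}}$ for the coproduct $\Delta\alpha = \sum \alpha_{(1)}\otimes \alpha_{(2)}$ of $M$, and, in the unital case, $1_\alpha = \varepsilon(\alpha)$. Bilinearity is automatic from bilinearity of $\double{-}{-}$ and of $\widehat v$. The multiplicativity check is more delicate and should reduce to matching the double Leibniz axiom for $\double{-}{-}$ against a compatibility between $\widehat v$ and $\Delta_M$; the construction of $\widehat v$ from $v$ described earlier in the paper is presumably engineered precisely so that this compatibility holds.

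Once the formula descends to a well-defined $\kk$-bilinear map on pairs of generators, I would extend to $A_M\otimes A_M\to A_M$ by the Leibniz rule in each variable, producing a biderivation. Antisymmetry then combines three ingredients: the skew-symmetry axiom $\double{b}{a} = -\sigma\double{a}{b}$ for the double bracket, cyclicity of $v$, which gives $\widehat v(\beta\otimes\alpha) = \sigma\widehat v(\alpha\otimes\beta)$, and commutativity of $A_M$, which allows one to reorder the product $(a_i)_{\alpha_j}(b_i)_{\beta_j}$; the two sign-reversals conspire to give $\bracket{b_\beta}{a_\alpha}_v = -\bracket{a_\alpha}{b_\beta}_v$.

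The main obstacle is the Jacobi identity, which, by the biderivation property, reduces to a check on triples of generators $a_\alpha$, $b_\beta$, $c_\gamma$. Expanding the cyclic sum $\bracket{a_\alpha}{\bracket{b_\beta}{c_\gamma}_v}_v + \text{(cyclic)}$ produces a sum over iterated double brackets of the form $\double{a}{\double{b}{c}}$ paired with iterated applications of $\widehat v$ on $M^{\otimes 3}$. I expect this to match Van den Bergh's double Jacobi identity on $A^{\otimes 3}$, specialized via the generating map $x\mapsto x_\xi$, with the three cyclic terms aligned using cyclicity of $v$ together with the compatibility of $\widehat v$ with $\Delta_M$. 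Managing the Sweedler-style bookkeeping over the two iterations of $\widehat v$ and of $\Delta$, and verifying that cyclicity of $v$ is exactly what is needed to align the cyclic permutations in $A^{\otimes 3}$ with those in $A_M$, is the technical heart of the argument.
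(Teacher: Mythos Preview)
Your outline is correct and follows essentially the same route as the paper: Lemma~4.2 there constructs the biderivation exactly as you describe (checking well-definedness against the multiplicativity relation via compatibilities of $\widehat v$ with $\mu$, and antisymmetry via cyclicity of $v$, double antisymmetry, and commutativity of $A_M$), and then expresses the Jacobiator on generators purely in terms of the induced tribracket, so that the double Jacobi identity $\triple{-}{-}{-}=0$ forces the Jacobi identity. The one place where your sketch understates the work is the Jacobi step: aligning the three cyclic terms requires two nontrivial identities in $\End(M^{\otimes 3})$, namely $p_{312}XY = XYp_{312}$ and $Y p_{132} X = X p_{132} Y$ for $X=\widehat v\otimes\id_M$, $Y=\id_M\otimes\widehat v$, and these do not follow from cyclicity alone by inspection---they need the explicit formula $\widehat v(\alpha\otimes\beta)=v(\alpha\otimes\beta^2)\,\beta^1\otimes\beta^3$ together with coassociativity (also, drop the remark about $1_\alpha=\varepsilon(\alpha)$: neither $A$ unital nor $M$ counital is assumed here).
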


Theorem~\ref{intro}    derives a Poisson bracket in $A_M$ from a double Poisson bracket in $A$ and a cyclic bilinear form on $M$. For constructions of double brackets, see \cite{VdB, VdW, MT2, MT}. We 
 give here  two   constructions  of cyclic   bilinear  forms. One construction starts from   a $\kk$-valued conjugation-invariant function on a finite group $G$ and  produces    a cyclic    form on the coalgebra $(\kk[G])^*$ dual to the group algebra $\kk[G]$.
In particular, this derives cyclic    forms on $(\kk[G])^*$ from the traces of linear representations of $G$  of finite rank.
The second construction starts from a symmetric Frobenius algebra  and produces a cyclic    form on the dual coalgebra.
For example, the algebra  of  matrices $\Mat_N(\kk)$ with $N\geq 1$ is a symmetric Frobenius algebra with Frobenius pairing  being  the trace of the  product of  matrices. This determines a cyclic bilinear form $v$ on    $(\Mat_N(\kk))^*$. The     bracket  $\bracket{-}{-}_v$  in~$A_N$  is the original Van den Bergh's    Poisson bracket. Note  that symmetric Frobenius algebras
naturally arise  in   Topological Quantum Field Theory, see   \cite{Ko}.

   We establish several properties of the   bracket $\bracket{-}{-}_v$  in $A_M$. If  the coalgebra  $M$ is counital, then    the  group $U(M^*)$ of invertible elements   of the   algebra $M^*$  naturally acts on $A_M$. The commutator $[\varphi, \psi]=  \varphi  \psi-   \psi\varphi$ turns $M^*$ into  a Lie algebra, $\underline M^* $, which also acts on $A_M$. The  actions  of $U(M^*)$ and    $\underline M^* $ are compatible in an appropriate sense and preserve the bracket  $\bracket{-}{-}_v$. For non-degenerate $v$,  we define  a natural trace map $A\to A_M  $ and compute  $\bracket{-}{-}_v$ on its   image.

When the algebra $A$ is unital, we define a quotient $A^+_M$ of $A_M$  characterized by the   universal  property as above with
$B$ running over unital commutative algebras. The bracket $\bracket{-}{-}_v$ in $A_M$   induces a bracket $\bracket{-}{-}_v^+$ in $A^+_M$, and the actions of $U(M^*)$ and  $\underline M^* $ on $A_M$   descend to $A^+_M$. We use the action of $\underline M^* $ to   formulate    Hamiltonian reduction  in this setting and  to study     double quasi-Poisson algebras.

We also introduce an  equivariant generalization of the bracket  $\bracket{-}{-}_v$ and  discuss connections with the topology of   surfaces.

The main body of the paper is written  in the more general setting of
graded algebras. Instead of   (double) Poisson brackets we consider their graded versions, the   (double) Gerstenhaber brackets.

 This work   was partially supported by the NSF
  grant  DMS-1202335.

\section{Representation algebras}\label{sec-AMT0}

   We begin with generalities on graded   algebras  and coalgebras and then introduce   the representation algebras.

\subsection{Graded  algebras}\label{gral}

By a {\it   module} we mean a   module over the fixed commutative ring $\kk$. By a {\it graded  module} we mean a $\ZZ$-graded  module $A=\oplus_{p\in \ZZ} \, A^p$.
An element $a$ of $A$ is {\it homogeneous} if    $a\in A^p$ for some $p$; we
 call $p$ the {\it degree} of $a$ and write   $\vert a\vert =p$.
 For any integer $n$, the  {\it $n$-degree} $\vert a\vert_n$ of a homogeneous element $a \in A$   is defined by  $\vert a\vert_n=  \vert a\vert+n$. Note that the $n$-degree   of $0\in A$ is an arbitrary integer.

An {\it   algebra} is  a     module   endowed with  an  associative
bilinear multiplication.
A {\it graded  algebra} is    a   graded module $A $ endowed with  an  associative
bilinear multiplication such that $ A^p A^q\subset A^{p+q}$ for all   $p,q\in \ZZ$.     Thus,
$\vert a b \vert =\vert a \vert +\vert b\vert$ for   homogeneous $a,b\in A$.
A graded algebra    $A$ is \emph{unital} if it has a  two-sided unit $1_A\in A^0$.  Unless explicitly stated to the contrary, we   do not require   algebras to be unital.

 For  a graded  algebra $A$,   denote by $[A,A]$ the  submodule of $A$ spanned by
the vectors $ab-(-1)^{\vert a \vert \vert b\vert} ba$ where $a,b$ run over all homogeneous
elements of $ A$.  The graded
algebra $A$  is {\it commutative} if   $[A,A]=0$.
Factoring an arbitrary graded algebra
 $A$ by the 2-sided ideal generated by $[A,A]$ we obtain a
commutative graded algebra  $\Com(A)$.

Given graded algebras $A$ and $B$,   a   {\it  graded  algebra homomorphism}
$A\to B$ is an  algebra homomorphism  $A \to  B$ carrying $A^p$ to $B^p$ for all $p\in \ZZ$. The   graded algebras  and graded algebra homomorphisms form a  category denoted $\grAlg$.

 Non-graded algebras will be identified with      graded algebras concentrated in degree $0$.


\subsection{Coalgebras}\label{MMM---} A {\it coalgebra} is a    module $M$ endowed with a coassociative linear map  ${\mu}:M\to M\otimes M$ called the {\it comultiplication} (here and below $\otimes=\otimes_{\kk}$). We do not suppose $M$ to be graded.
The image of any $\alpha\in M$ under ${{\mu}}$ expands (non-uniquely) as
  a   sum $  \sum_i \alpha^1_i \otimes
 \alpha^2_i$ where $ \alpha^1_i,
 \alpha^2_i \in  M$   and the
index $i$ runs over
a finite set. To shorten the formulas, we will   drop  the index $i$ and the summation sign
and write simply ${{\mu}}(\alpha)=   \alpha^1  \otimes
 \alpha^2$.    The coassociativity of ${{\mu}}$ reads then $(\alpha^1)^1\otimes (\alpha^1)^2 \otimes \alpha^2=\alpha^1 \otimes (\alpha^2)^1 \otimes (\alpha^2)^2$.

 A {\it counit} of a coalgebra
$M$ is     a linear map $\varepsilon =\varepsilon_M :M\to \kk$  such that   $\varepsilon (\alpha^1) \alpha^2=\varepsilon (\alpha^2) \alpha^1=\alpha$ for all $\alpha\in M$. If a counit exists,  then it is unique. A coalgebra having a counit is   \emph{counital}.
Unless explicitly stated to the contrary, we   do not require   coalgebras to be counital.

\subsection{The algebra  $\widetilde A_M$}\label{AMT0ee} From now on,  the symbol $A$ denotes a graded algebra and the symbol $M$ denotes a   coalgebra   with comultiplication ${{\mu}} $.
 Consider the algebra ${\widetilde A}_M$  with generators $\{a_{\alpha}\}$, where $a$ runs over   $A$ and $\alpha
$ runs over   $ M$, subject to the following two sets of relations:

 (i) (the bilinearity relations) for all  $k\in \kk$,  $a,b\in A$,  and $\alpha, \beta\in {M}$, \begin{equation*}\label{eq:addid} k \, a_{\alpha}=(ka)_{\alpha}=  a_{k\alpha} , \quad (a+b)_{\alpha}=  a_{\alpha} + b_{\alpha}, \quad
 a_{\alpha+\beta}= a_{\alpha} +a_{\beta}; \end{equation*}

 (ii) (the multiplicativity relations) for all   $a,b\in A$  and $\alpha \in {M}$,
\begin{equation*}\label{mult} (ab)_{\alpha}=    a_{\alpha^1} b_{\alpha^2} .
\end{equation*}

A fully developed  version of the latter relation is
  $(ab)_{\alpha}=  \sum_i  a_{\alpha^1_i} b_{\alpha^2_i}$ for an  expansion ${{\mu}}(\alpha)=   \sum_i \alpha^1_i \otimes
 \alpha^2_i$ with $\alpha^1_i , \alpha^2_i \in M$. The   bilinearity relations ensure that   $a_{\alpha^1} b_{\alpha^2}=\sum_i  a_{\alpha^1_i} b_{\alpha^2_i}$ does not depend on the choice of the expansion.

We turn ${\widetilde A}_M$ into a graded algebra by declaring that the generator    $  a_{\alpha}$ is homogeneous of  degree $ \vert a \vert  $ for all  homogeneous $a\in A $ and  all $\alpha\in   M $.  A typical element of ${\widetilde A}_M$ is represented by a non-commutative polynomial in the generators with zero free term.
One can alternatively define  ${\widetilde A}_{M}$ as the tensor algebra  $\oplus_{n\geq 1} (A\otimes M)^{\otimes n}$ quotiented by the relations $ ab \otimes \alpha =(a\otimes \alpha^1) (b\otimes \alpha^2) $ for all   $a,b\in A$, $\alpha\in M$.

The construction of $\widetilde A_{M}$ is functorial: a  graded    algebra homomorphism $f:A\to B$ induces a
 graded  algebra homomorphism $\widetilde f_M:\widetilde A_{M}\to \widetilde B_M$ by
$ \widetilde f_M(a_{\alpha})  = (f(a))_{\alpha}$ for all $a\in A$, $\alpha \in {M}$. For a fixed $M$, the formulas $  A\mapsto \widetilde A_{M}$, $f \mapsto \widetilde f_{M}$ define  an endofunctor of    the category of graded algebras  $\grAlg$.

\subsection{The universal  property}\label{MMM---convo} We now formulate the universal  property of $ \widetilde A_{M}$.
 Note that the  linear maps from   $M$  to  a  graded algebra $B $ form a graded module  $$H_M(B)=\Hom_\kk(M,B) =\oplus_{p } \,  \Hom_\kk(M,B^p).$$  The product of
$f_1, f_2\in H_M(B)$ is the   map $ {m_B} (f_1\otimes f_2){{\mu}}:M\to B$ where ${m_B}: B\otimes B\to B$ is the multiplication in~$B$. This makes $H_M(B)$ into  a graded algebra called the {\it convolution algebra}.
 The  map $  B\mapsto H_M(B) $  obviously extends to a functor
$H_M: \grAlg\to \grAlg$.


\begin{lemma}\label{just} Let $A$ be a graded algebra and $M$ be a coalgebra.
For any graded algebra $B$, there is a canonical bijection
\begin{equation}\label{eq:adjunction}
\Hom_{ \grAlg } ( \widetilde A_{M} , B)
\stackrel{\simeq}{\longrightarrow} \Hom_{\grAlg} (A,  H_M(B))
\end{equation}
which is natural in $A$ and $B$.
\end{lemma}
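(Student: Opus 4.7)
The plan is to construct the bijection in both directions explicitly from the generators-and-relations presentation of $\widetilde A_M$, and then verify naturality.

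First, given a graded algebra homomorphism $F : \widetilde A_M \to B$, I define $\Phi(F) : A \to H_M(B)$ by the formula $\Phi(F)(a)(\alpha) = F(a_\alpha)$ for $a \in A$, $\alpha \in M$. The bilinearity relation $a_{\alpha+\beta} = a_\alpha + a_\beta$ and $(ka)_\alpha = ka_\alpha = a_{k\alpha}$ forces $\Phi(F)(a)$ to be $\kk$-linear in $\alpha$, hence an element of $H_M(B)$, and $\kk$-linear in $a$. If $a \in A^p$, the degree convention on $\widetilde A_M$ gives $F(a_\alpha) \in B^p$, so $\Phi(F)(a) \in \Hom_\kk(M, B^p) = H_M(B)^p$, showing $\Phi(F)$ preserves degree. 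Multiplicativity of $\Phi(F)$ is exactly the content of the multiplicativity relation in $\widetilde A_M$: writing $\mu(\alpha) = \alpha^1 \otimes \alpha^2$, one has
\[
\Phi(F)(ab)(\alpha) = F((ab)_\alpha) = F(a_{\alpha^1} b_{\alpha^2}) = F(a_{\alpha^1}) F(b_{\alpha^2}) = m_B\bigl(\Phi(F)(a) \otimes \Phi(F)(b)\bigr) \mu(\alpha),
\]
which is the definition of the convolution product $\Phi(F)(a) \cdot \Phi(F)(b)$ evaluated at $\alpha$.

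For the inverse, given a graded algebra homomorphism $g : A \to H_M(B)$, I define $\Psi(g) : \widetilde A_M \to B$ by specifying it on generators as $\Psi(g)(a_\alpha) = g(a)(\alpha)$ and extending multiplicatively. The main thing to check is that this assignment respects the defining relations of $\widetilde A_M$. The bilinearity relations follow from $\kk$-linearity of $g$ and of the map $g(a) : M \to B$; the grading is preserved because $g(a) \in H_M(B)^{|a|}$ means $g(a)(\alpha) \in B^{|a|}$. The multiplicativity relation $(ab)_\alpha = a_{\alpha^1} b_{\alpha^2}$ is exactly the statement that $g$ is an algebra homomorphism into the convolution algebra: both sides are sent to $g(ab)(\alpha) = (g(a) \cdot g(b))(\alpha) = g(a)(\alpha^1) g(b)(\alpha^2)$.

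It is then immediate from the definitions that $\Phi$ and $\Psi$ are mutually inverse: $\Psi\Phi(F)$ agrees with $F$ on the generators $a_\alpha$ and both are algebra homomorphisms, while $\Phi\Psi(g)(a)(\alpha) = g(a)(\alpha)$. Naturality in $A$ and $B$ is a direct verification: a graded algebra map $f : A \to A'$ induces $\widetilde f_M$ as in Section~\ref{AMT0ee}, and a graded algebra map $h : B \to B'$ induces $H_M(h) : H_M(B) \to H_M(B')$ by post-composition, and the commutative squares reduce on generators to the tautology $h(F(a_\alpha)) = (H_M(h) \circ \Phi(F))(a)(\alpha)$.

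The only mildly subtle point is confirming that the multiplicativity relation in $\widetilde A_M$ is well-posed despite the non-uniqueness of the expansion of $\mu(\alpha)$; this is handled by the bilinearity relations, as the paper already notes. Beyond that the proof is essentially a formal unwinding of definitions, and I would not expect any real obstacle.
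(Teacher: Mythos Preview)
Your proof is correct and follows essentially the same route as the paper: both construct the bijection by sending $r:\widetilde A_M\to B$ to $s(a)(\alpha)=r(a_\alpha)$ and inversely sending $s:A\to H_M(B)$ to the map determined on generators by $r(a_\alpha)=s(a)(\alpha)$, then verify the relations and multiplicativity directly. Your version is slightly more thorough in that you spell out the naturality check explicitly, which the paper leaves implicit.
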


\begin{proof}
The map \eqref{eq:adjunction}  carries
a   graded algebra homomorphism $r:\widetilde A_{M}\to B$    to the
linear  map   $ s=s_r: A \to H_M(B)$  defined by
  $s(a)(\alpha)= r( a_{\alpha }) $ for all $a\in A$ and $\alpha\in M$.
  The map $s$ is grading-preserving and multiplicative: for  $a,b\in A$,
  $$s(a) s(b)(\alpha)=   {m_B} (s(a) \otimes s(b)){{\mu}} (\alpha)= {m_B} (s(a) \otimes s(b)) (\alpha^1 \otimes \alpha^2) $$
  $$= {m_B} (s(a) (\alpha^1)  \otimes s(b) (\alpha^2)) = r( a_{\alpha^1 }) r( b_{\alpha^2 })=r( a_{\alpha^1 }  b_{\alpha^2 }) =r((ab)_\alpha)= s(a b)(\alpha).$$

The map inverse to \eqref{eq:adjunction}  carries
     a graded algebra homomorphism  $s:A \to H_M(B)$ to the
algebra homomorphism   $ r=r_s: \widetilde A_{M}\to B$ defined on the   generators    by $r( a_{\alpha })=s(a)(\alpha)$. That   this rule  is compatible with   the bilinearity relations between the generators of $\widetilde A_M$ follows from the   linearity of $s$. The compatibility with the multiplicativity relations:
$$r(  (ab)_{\alpha})  = s(ab)(\alpha)= s(a) s(b) (\alpha)= {m_B} (s(a) \otimes s(b))  {{\mu}} (\alpha) $$
$$=  {m_B} (s(a) \otimes s(b))   (\alpha^1 \otimes \alpha^2) =s(a)   (\alpha^1 ) \, s(b)   (\alpha^2 ) = r( a_{\alpha^1 }) \, r( b_{\alpha^2 })= r( a_{\alpha^1 } b_{\alpha^2 } ).$$
 Clearly, $r$ preserves the grading,  and   the maps $s\mapsto r_s$ and $r\mapsto s_r$ are mutually inverse.
 \end{proof}

  Lemma~\ref{just} implies the following universal  property of $\widetilde A_M$: for any graded  algebra $B$ and any graded  algebra   homomorphism  $s:A\to H_M(B)$, there is a unique graded  algebra  homomorphism  $r:\widetilde  A_M\to B$ such that the   diagram
  $$
\xymatrix{ A  \ar[r]   \ar[rd]_{s} & H_M(\widetilde  A_M)\ar[d]^{H_M(r)}  \\
&   H_M(B)
}
$$ commutes.   Here the horizontal arrow is the  graded  algebra   homomorphism carrying   $a\in A$ to the   map $M\to \widetilde  A_M, \alpha\mapsto a_\alpha$. This homomorphism corresponds to the identity automorphism of $\widetilde A_M$ via \eqref{eq:adjunction}.

\subsection{The algebra $A_{M}$}\label{commAN} The commutative graded algebra $A_{M}=\Com(\widetilde A_{M}) $ is called the {\it representation algebra of $A$ with respect to $M$}. This algebra   is defined by the same generators and relations as
 $\widetilde A_{M}$ with additional  relations
$a_{\alpha} b_{\beta}=(-1)^{\vert a \vert
\vert b \vert} b_{\beta} a_\alpha$ for all homogeneous $a,b\in A$ and
$\alpha, \beta \in {M}$. The construction of $  A_{M}$ is
functorial: a morphism $f:A\to B$ in $\grAlg$ induces a morphism
$\widetilde f_M:\widetilde A_{M}\to \widetilde B_M$ in $\grAlg$, which in its turn
induces a morphism $  f_M:  A_{M}\to   B_M$ in the category of
commutative graded algebras  $\grCom$.
For any commutative graded algebra $B$,
$$
 \Hom_{\grCom} ({  A}_M, B) {\simeq}  \Hom_{\grAlg} (\widetilde A_{M}, B)
{\simeq} \Hom_{\grAlg} (A, H_M(B)).
$$
   The algebra $A_M$ has the same universal  property as $\widetilde A_M$  with  $B$ running  over commutative graded algebras.

 \subsection{Examples}\label{newexa} 1. Consider the coalgebra $M=(\Mat_N(\kk) )^*$ dual to the matrix algebra $\Mat_N(\kk)$ with $N\geq 1$. For    $i,j\in \{1,..., N\}$, let $\tau_{i,j} \in M$ be the linear   functional on $\Mat_N(\kk) $ carrying each matrix to its $(i,j)$-term.   These functionals form a basis of   $M $.  The comultiplication  in $M $    carries 
 $  \tau_{i,j}$ to $\sum_{r}  \tau_{i,r}\otimes \tau_{r,j}$ for all $i,j$.    
Given a graded algebra $A$, we write $a_{i,j}$ for the generator $a_{(\tau_{i,j})}  $ of   $\widetilde A_M$.  The defining relations of $\widetilde A_M$ in these generators are as follows:
$$(ka)_{i,j}=k\, a_{i,j}, \quad (a+b)_{i,j}=a_{i,j}+b_{i,j}, \quad (ab)_{i,j}=\sum_{r=1}^N a_{i,r} b_{r,j} $$
for all $a,b\in A$, $k\in \kk$, $i,j\in \{1,..., N\}$.

2.   Consider  the  group  algebra  ${\mathcal A}=\kk[G]$ of  a finite group  $G$ and the dual coalgebra $M={\mathcal A}^*$. The underlying module of $M$ is free with basis $\{\delta_g\}_{g\in G }$ dual to the basis $\{g\}_{g\in G}$ of ${\mathcal A}$.  The comultiplication in   $M$  is given by  $${{\mu}}(\delta_g)= \sum_{\substack{x,y \in G \\ xy =g}} \delta_{x} \otimes \delta_{y} \quad {\rm {for \,\, any}} \quad g\in G.$$  Given a graded algebra $A$, we write $a_{g}$ for the generator $a_{(\delta_g)}  $ of   $\widetilde A_M$.  The defining relations of $\widetilde A_M$ in these generators are as follows:
\begin{equation}\label{newexaeq}(ka)_{g}=k\, a_{g}, \quad (a+b)_{g}=a_{g}+b_{g}, \quad (ab)_{g}=\sum_{\substack{x,y \in G \\ xy =g}}   a_{x} b_{y} \end{equation}
for all $a,b\in A$, $k\in \kk$, $g\in G$.

3. If, in the previous example, $A=\kk[\pi]$ is the (non-graded) group  ring of a group $\pi$, then 
the algebra $\widetilde A_M$ is generated by the set $\{a_g\}_{a\in \pi, g\in G}$ subject only to the third relation in \eqref{newexaeq} for all $a,b\in \pi$, $g\in G$.


%

\section{Brackets and double brackets}\label{bibrackets}

We recall    the notions  of   Gerstenhaber  algebras and       double brackets.

 \subsection{Gerstenhaber  algebras}\label{gral++++---}   A   {\it    bracket}  in a graded module $B$  is a    linear map
${B}\otimes {B}\to {B}$. A bracket $\bracket{-}{-} $ in $B$ is {\it $n$-graded} for $n\in \ZZ$ if     $\bracket{{B}^p}{{B}^q}\subset {B}^{p+q+n}$ for all $p,q\in \ZZ$. An $n$-graded   bracket
 $\bracket{-}{-}$ in $ {B}$ determines a linear map $\{-,-,-\}: {B} \otimes  B \otimes  B  \to {B}$, called the \emph{Jacobi form},  by
 \begin{equation*}\label{jaco---} \{a,b,c\}= (-1)^{ \vert a \vert_n \vert c \vert_n} \bracket {a}  {\bracket{b}{c}}
+ (-1)^{ \vert b \vert_n\vert a \vert_n} \bracket  {b} {\bracket{c}{a}}
+(-1)^{  \vert c \vert_n \vert b \vert_n  }   \bracket   {c} {\bracket{a}{b}}
\end{equation*}
  for any homogeneous $a,b,c \in {B}$. Note   that $ \{a,b,c\}= \{ b,c,a\}= \{c, a,b \}$.

By an $n$-graded {\it biderivation} in  a  graded algebra ${A}$ we mean an $n$-graded  bracket $\bracket{-}{-} $ in $A$ such that 
for any homogeneous $a,b,c  \in {A}$,
\begin{equation}\label{poisson1} \bracket{a}{bc} = \bracket{a}{b}c +
(-1)^{\vert a\vert_n \vert b\vert} b \bracket{a}{c},
\end{equation}
\begin{equation}\label{poisson2} \bracket{ab}{c} =  a  \bracket{b}{c}   + (-1)^{\vert
b\vert \vert c\vert_n}     \bracket {a}{c} b, \end{equation}
 \begin{equation}\label{antis} \bracket{a}{b}=- (-1)^{ \vert a \vert_n \vert
b\vert_n} \bracket{b}{a}.
\end{equation}
   Formulas \eqref{poisson1} and \eqref{poisson2}  are the   {\it $n$-graded Leibniz rules}. 
   Formula \eqref{antis} is the   {\it $n$-graded antisymmetry}. Clearly, each of the Leibniz rules follows from the other one and the antisymmetry.

 An $n$-graded  biderivation    in a  graded algebra  
is    {\it  Gerstenhaber} if   its Jacobi form is equal to zero (this    is the {\it  $n$-graded Jacobi identity}).
An $n$-graded  Gerstenhaber biderivation     is also  called  an \emph{$n$-graded Gerstenhaber bracket}. A graded algebra equipped with  an $n$-graded Gerstenhaber bracket is called an \emph{$n$-graded Gerstenhaber algebra}.
  For non-graded algebras (i.e., for  graded algebras concentrated in degree zero) and $n=0$, we say \lq\lq Poisson" instead of  \lq\lq Gerstenhaber".

An example of a $0$-graded
Gerstenhaber  algebra   in provided by an arbitrary graded algebra with bracket    $ \bracket{a}{b} =    ab-(-1)^{\vert a\vert \vert
b\vert}  ba$ for    homogeneous  $a,b $.


\subsection{Double brackets}\label{bibr} Double brackets in   algebras were   introduced by Van den Bergh     \cite{VdB}. 
 The exposition here follows     \cite{MT}. 
We begin with notation.


  The   tensor product of $m\geq 2$ copies of a graded algebra $A$ is denoted  $A^{\otimes m}$.
For a permutation $(i_1,\dots,i_m)$ of $(1,\dots,m)$ and $n\in \ZZ$,
the symbol $P_{i_1\cdots i_m,n}$ denotes the  \emph{$n$-graded permutation} $A^{\otimes m}\to A^{\otimes m}$
carrying   $a_1\otimes \cdots \otimes a_m $   with homogeneous $a_1,\dots,a_m\in A$ to
$(-1)^{t} a_{i_1} \otimes   \cdots \otimes a_{i_m}$
 where $t\in \ZZ$ is the sum of the products $\vert a_{i_k} \vert_n \, \vert a_{i_l} \vert_n$
over all pairs of indices  $ k < l $  such that $i_k>i_l$.
 Set
$P_{i_1\cdots i_m } = P_{i_1\cdots i_m,0}$.

We define
two   $A$-bimodule structures on $A^{\otimes 2} $    by  $a(x\otimes y) b= ax \otimes  y b $
and
$$
a*(x\otimes y)*b= (-1)^{\vert a\vert \vert    xb\vert +\vert b\vert \vert y\vert} \, x b \otimes a y \quad
$$
for any homogeneous $a,b,x,y \in A$.

An    \emph{ $n$-graded  double bracket} in    $A$ with $n\in \ZZ$ is a  linear map $\double{-}{-}\in \End (A^{\otimes 2})$      satisfying the following conditions:

(i) (the grading condition) for
any integers $p,q$, $$
\double{A^p}{A^q}  \subset \bigoplus_{{i+j=p+q+n}}\, A^i \otimes A^j;
$$

(ii) (the $n$-graded antisymmetry) $\double{-}{-} P_{21,n}=- P_{21} \double{-}{-}$;

(iii) (the $n$-graded Leibniz rules) for all   homogeneous $a,b, c \in A$,
\begin{equation}\label{bibi}
\double{a}{bc} = \double{a}{b}c + (-1)^{\vert a\vert_n \vert b\vert} b \double{a}{c},
\end{equation}
\begin{equation}\label{bibi+}
\double{ab}{c} =  a* \double{b}{c}   + (-1)^{\vert
b\vert \vert c\vert_n}   \double{a}{c}*b.
\end{equation}

Note that   \eqref{bibi+} is a consequence of
\eqref{bibi} and the antisymmetry. Observe also
 that each   $x \in A\otimes A $ expands  (non-uniquely)
  as a   sum $ x=\sum_i x'_i \otimes
x''_i$ where $ x'_i  ,
x''_i\in A$ are homogeneous   and the
index $i$ runs over
a finite set. In the sequel, we will   drop   the index and the summation sign
and write simply $x =x'\otimes x''$. In this notation,
$$\double{a}{b}= \double{a}{b}'\otimes \double{a}{b}''$$
for any $a,b\in A$.   The $n$-graded antisymmetry condition  may be rewritten as
\begin{equation}\label{flip}\double{b}{a}=  - (-1)^{\vert a \vert_n \vert b \vert_n+\vert \double{a}{b}' \vert \vert \double{a}{b}'' \vert} \double{a}{b}''\otimes \double{a}{b}' \end{equation}
for any    homogeneous $a,b \in A$.

  An $n$-graded   double bracket  $\double{-}{-}$ in $A$ determines a linear endomorphism  $\triple{-}{-}{-} $ of $A^{\otimes 3}$, called the {\it induced    tribracket},  by
\begin{equation}\label{tribracket}
\triple{-}{-}{-} =
\sum_{i=0}^2  P_{312}^i ( \double{-}{-} \otimes \id_A) (\id_A \otimes \double{-}{-}) P_{312,n}^{-i}.
\end{equation}
 The   double bracket  $\double{-}{-}$
is   an {\it   $n$-graded  double Gerstenhaber  bracket} if the induced tribracket   is equal to zero.
The pair $(A,\double{-}{-})$ is called then an \emph{$n$-graded double Gerstenhaber algebra}.
 For non-graded algebras  and $n=0$, we say \lq\lq Poisson" instead of \lq\lq Gerstenhaber".

\section{Cyclic bilinear forms  and induced brackets}\label{bibrJac}

We define cyclic bilinear forms on coalgebras and  introduce the associated brackets in representation algebras.

 \subsection{The map ${\widehat v}$}\label{mapfv} The comultiplication ${{\mu}}$ in a coalgebra $M$ induces a linear map ${{\mu}}^m:M\to M^{\otimes m}$ for all $m\geq 2$. Namely, ${{\mu}}^2={{\mu}}$ and inductively, for $m\geq 3$,
  $${{\mu}}^m=(\id_{M^{\otimes k}} \otimes {{\mu}}  \otimes \id_{M^{\otimes (m-k-2)}}) {{\mu}}^{m-1}:M\to M^{\otimes m}$$
  where $k$ is any integer between $0$ and $m-2$. The coassociativity of ${{\mu}}$ ensures that ${{\mu}}^m$ does not depend on the choice of $k$.
For   $\alpha\in M$, we   write  $  {{\mu}}^m (\alpha)=   \alpha^1  \otimes
 \alpha^2    \otimes \cdots \otimes
 \alpha^m $
 where $\alpha^1, \alpha^2,..., \alpha^m \in M$ depend on an  index running over a finite set, and summation over this index is tacitly assumed.

 We    identify   elements of   $(M  \otimes M)^*=\Hom_\kk (M^{\otimes 2}, \kk)$ with $\kk$-valued bilinear forms on $M $.
 Each    $v\in (M  \otimes M)^*  $  determines  a linear map ${\widehat v}: M^{\otimes 2} \to M^{\otimes 2}$ by
  \begin{equation}\label{fv} {\widehat v}(\alpha\otimes \beta)=   v (\alpha \otimes \beta^2) \beta^1 \otimes \beta^3  \end{equation}
  for any $\alpha, \beta\in M$.

For a permutation $(i_1,\dots,i_m)$ of $(1,\dots,m)$ with $m\geq 2$,
the symbol $p_{i_1\cdots i_m }$ denotes the  linear map  $M^{\otimes m}\to M^{\otimes m}$
carrying any vector  $\alpha_1\otimes \cdots \otimes \alpha_m $    to
$ \alpha_{i_1} \otimes   \cdots \otimes \alpha_{i_m}$. In particular, $p_{21}: M^{\otimes 2}\to  M^{\otimes 2} $ is the   flip of the tensor factors.

 \begin{lemma}\label{propbiv}
 For any $v\in  (M  \otimes M)^* $,  the following two  diagrams   commute:
 $$
 \xymatrix@R=1cm @C=2cm {
 M \otimes M  \ar[d]_{\id_M\otimes {{\mu}}}   \ar[r]^-{{\widehat v}} &   {M \otimes M} \ar[d]^{\id_M\otimes {{\mu}}}\\
 M \otimes M \otimes M  \ar[r]^-{{\widehat v} \otimes \id_M}
&   {M \otimes M \otimes M},
}
$$
$$
\xymatrix@R=1cm @C=1.5cm{
M \otimes M    \ar[d]_{\id_M\otimes {{\mu}}}  \ar[r]^-{{\widehat v}} &   {M \otimes M}  \ar[rd]^{{{\mu}} \otimes \id_M}\\
M \otimes M \otimes M   \ar[r]^-{{p_{21}\otimes \id_M}}
&   {M \otimes M \otimes M}  \ar[r]^-{\id_M \otimes {\widehat v}} &  {M \otimes M \otimes M}
 .
}
$$
\end{lemma}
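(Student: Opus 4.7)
The plan is to evaluate both compositions in each square on a generic element $\alpha\otimes \beta \in M\otimes M$ and match the outputs in $M^{\otimes 3}$ by applying coassociativity. The main tool is the iterated comultiplication $\mu^m:M\to M^{\otimes m}$ introduced in Section~4.1, whose independence of the iteration scheme is precisely the coassociativity of $\mu$. I write $\mu^4(\beta)=\beta^1\otimes \beta^2\otimes \beta^3\otimes \beta^4$ on both sides of each computation.

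For the first square, the top-right composition sends $\alpha\otimes\beta$ first to $v(\alpha\otimes \beta^2)\beta^1\otimes \beta^3$, using $\mu^3(\beta)=\beta^1\otimes \beta^2\otimes \beta^3$, and then $\id_M\otimes \mu$ splits the last factor, yielding after coassociative relabeling
\[
v(\alpha\otimes \beta^2)\,\beta^1\otimes \beta^3\otimes \beta^4.
\]
The bottom-left composition first gives $\alpha\otimes \beta^1\otimes \beta^2$ via $\id_M\otimes \mu$, and then $\widehat v\otimes \id_M$ invokes $\mu^3(\beta^1)$ to produce $v(\alpha\otimes (\beta^1)^{(2)})(\beta^1)^{(1)}\otimes (\beta^1)^{(3)}\otimes \beta^2$. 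Coassociativity identifies the four-tensor $(\beta^1)^{(1)}\otimes (\beta^1)^{(2)}\otimes (\beta^1)^{(3)}\otimes \beta^2$ with $\mu^4(\beta)=\beta^1\otimes \beta^2\otimes \beta^3\otimes \beta^4$, and the output becomes the same expression $v(\alpha\otimes \beta^2)\,\beta^1\otimes \beta^3\otimes \beta^4$. The two paths agree.

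For the second square I would show that both routes produce $v(\alpha\otimes \beta^3)\,\beta^1\otimes \beta^2\otimes \beta^4$. The upper route applies $\widehat v$ first and then splits the first factor via $\mu\otimes \id_M$, which by coassociativity converts $v(\alpha\otimes \beta^2)(\beta^1)^{(1)}\otimes (\beta^1)^{(2)}\otimes \beta^3$ into that form. The lower route first gives $\alpha\otimes \beta^1\otimes \beta^2$, then flips the first two factors, and finally feeds $\alpha$ and $\beta^2$ into $\widehat v$, invoking $\mu^3(\beta^2)$ to produce $v(\alpha\otimes (\beta^2)^{(2)})\,\beta^1\otimes (\beta^2)^{(1)}\otimes (\beta^2)^{(3)}$; coassociativity again reassembles $\beta^1\otimes (\beta^2)^{(1)}\otimes (\beta^2)^{(2)}\otimes (\beta^2)^{(3)}$ into $\mu^4(\beta)$, giving the same four-tensor contraction.

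No genuine obstacle arises: each verification reduces to recognising a four-fold expansion as $\mu^4(\beta)$. The only difficulty is notational, namely tracking which component of which iterated coproduct is paired by $v$. For clarity I would carry out the proof with the $\mu^4(\beta)$ expansion written explicitly on both sides of each diagram.
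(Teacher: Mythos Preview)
Your proposal is correct and follows essentially the same approach as the paper: both evaluate each diagram on a generic $\alpha\otimes\beta$, expand using the iterated comultiplication, and invoke coassociativity to match the outputs. The only difference is cosmetic---you make the coassociative relabelings $(\beta^1)^{(1)}\otimes(\beta^1)^{(2)}\otimes(\beta^1)^{(3)}\otimes\beta^2 = \mu^4(\beta)$ explicit, whereas the paper's Sweedler notation absorbs them silently.
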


\begin{proof}   The commutativity of the first diagram: for any $\alpha, \beta \in M$,
$$({\widehat v}\otimes \id_M) (\id_M \otimes {{\mu}})(\alpha \otimes \beta )=  ({\widehat v}\otimes \id_M)(\alpha \otimes \beta^1\otimes   \beta^2)$$
$$=  v (\alpha \otimes \beta^2) \beta^1 \otimes \beta^3  \otimes   \beta^4 =(\id_M \otimes {{\mu}})(v(\alpha \otimes \beta^2) \beta^1 \otimes \beta^3)= (\id_M \otimes {{\mu}}) {\widehat v}(\alpha \otimes \beta ).$$
The commutativity of the second diagram is verified similarly: for  $\alpha, \beta \in M$,
$$(\id_M \otimes {\widehat v})(p_{21}\otimes \id_M) (\id_M \otimes {{\mu}}) (\alpha \otimes \beta )=
(\id_M \otimes {\widehat v}) (  \beta^1 \otimes \alpha \otimes \beta^2)$$
\begin{equation*}
=  v(\alpha \otimes \beta^3) \beta^1\otimes  \beta^2 \otimes \beta^4
=  v(\alpha \otimes \beta^2)   {{\mu}}  (  \beta^1) \otimes \beta^3  =({{\mu}} \otimes \id_M) {\widehat v} (\alpha \otimes \beta ). \qedhere\end{equation*}
\end{proof}

  Given $\alpha, \beta\in M$,  we can   expand  $  {\widehat v}(\alpha \otimes \beta)  $  as a  finite sum $\sum_i x_i\otimes y_i$ with $x_i, y_i\in M$. To shorten the formulas, we will suppress  the index  and the summation sign and write $\alpha_\beta$ for $x_i$ and $\beta^\alpha$ for $y_i$. Thus, we  write     ${\widehat v}(\alpha \otimes \beta)=\alpha_\beta \otimes \beta^\alpha$. In this notation, Lemma~\ref{propbiv} says that for any $\alpha, \beta \in M$,
\begin{equation}\label{ci23} {\alpha_\beta} \otimes { (\beta^\alpha)^1} \otimes { (\beta^\alpha)^2}={\alpha_{(\beta^1)}} \otimes {(\beta^1)^\alpha} \otimes {\beta^2},\end{equation}
\begin{equation}\label{ci24}  {(\alpha_\beta)^1}  \otimes  {(\alpha_\beta)^2}  \otimes  {\beta^\alpha}=
   {\beta^1}   \otimes {\alpha_{(\beta^2)}}  \otimes  {(\beta^2)^\alpha}.\end{equation}

\subsection{Cyclic bilinear forms}\label{cicrular}  We call  a bilinear form $v\in  (M  \otimes M)^*$ \emph{cyclic} if the endomorphisms ${\widehat v}$ and $p_{21}$ of $M^{\otimes 2}$ commute: ${\widehat v} p_{21}= p_{21}{\widehat v}$ or, equivalently, ${\widehat v} = p_{21}{\widehat v} p_{21} $. A bilinear form $v$ is cyclic if and only if for any $\alpha, \beta\in M$,
\begin{equation}\label{forcyc}     v(\alpha \otimes \beta^2) \beta^1 \otimes \beta^3=v(\beta \otimes \alpha^2) \alpha^3 \otimes \alpha^1 . \end{equation}
   In the notation above, this
can be rewritten     as
$   \alpha_\beta \otimes  \beta^\alpha = \alpha^\beta\otimes \beta_\alpha    $.

The set of cyclic bilinear forms on $M$ is a submodule of $(M  \otimes M)^*$.
 In particular,  $v=0$ is cyclic. More interesting examples    will be given in Sections~\ref{frfr----} and~\ref{FFFF}.

  The following lemma is our main technical result. It derives a bracket in the representation  algebra $ A_M$  from a   double bracket in   $A$ and a  cyclic    form on~$M$.

 \begin{lemma}\label{bra}
    Let  $\double{-}{-}$ be an $n$-graded  double bracket  in a graded algebra $A$ with $n\in \ZZ$. Let $v $ be a cyclic  bilinear form on a coalgebra $M$. There is a unique $n$-graded  biderivation   $ \bracket{-}{-}_v$ in $ A_M$   such that for any
  $a,b\in   A $  and $\alpha, \beta \in M$,
\begin{equation}\label{mainf}\bracket{a_\alpha}{b_\beta}_v= \double{a}{b}'_{\alpha_\beta} \double{a}{b}''_{\beta^\alpha} =v (\alpha \otimes \beta^2) \double{a}{b}'_{ \beta^1} \double{a}{b}''_{ \beta^3} \end{equation}
where we   expand  $\double{a}{b}=\double{a}{b}'\otimes \double{a}{b}''$
  as in Section~\ref{bibr}. The   associated Jacobi form $\{-,-,-\}_v$ in $ A_M$    is computed on the generators of $A_M$ as follows:   for  any homogeneous $a,b,c\in A$ and any $\alpha, \beta, \gamma\in M$,
  \begin{equation}\label{IMP1-}  \{a_{\alpha}, b_{\beta} , c_{\gamma}\}_v=Q-R\end{equation} where
\begin{equation}\label{IMP1} Q= Q(a,b,c, \alpha, \beta, \gamma)= (-1)^{\vert a
\vert_n \vert   c\vert_n } {\triple {a}  {b}{c}}_{\alpha_{(\beta_\gamma)}}'    {\triple {a}  {b}{c}}_{{(\beta_\gamma)^\alpha}}''
{\triple {a}  {b}{c}}_{{\gamma^\beta}}''',\end{equation}
\begin{equation}\label{IMP2}R= R(a,b,c, \alpha, \beta, \gamma)=  (-1)^{\vert   ab \vert
\vert   c \vert_n } {\triple {a}  {c}{b}}_{{\alpha_{(\gamma^\beta)}}}' {\triple {a}  {c}{b}}_{{{(\gamma^\beta)^\alpha}}}'' {\triple {a}  {c}{b}}_{{\beta_\gamma}}'''.
\end{equation}
\end{lemma}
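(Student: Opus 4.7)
The plan is to define $\bracket{-}{-}_v$ on pairs of generators of $A_M$ by the formula \eqref{mainf}, extend it by the Leibniz rules to all of $A_M$, and verify compatibility with the defining relations. Uniqueness is immediate: an $n$-graded biderivation in a commutative graded algebra is determined by its values on any set of algebra generators via \eqref{poisson1} and \eqref{poisson2}, and the elements $\{a_\alpha\}$ generate $A_M$.

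For the existence portion, bilinearity of the formula in the four arguments $a, b, \alpha, \beta$ is immediate from the bilinearity of $\double{-}{-}$ and $v$. The essential verification is compatibility with the multiplicativity relation $(ab)_\alpha = a_{\alpha^1} b_{\alpha^2}$ in $A_M$. In the first slot one must check
\[
\bracket{(ab)_\alpha}{c_\gamma}_v = \bracket{a_{\alpha^1}}{c_\gamma}_v \, b_{\alpha^2} + (-1)^{\vert b\vert \vert c\vert_n} a_{\alpha^1} \bracket{b_{\alpha^2}}{c_\gamma}_v,
\]
and similarly in the second slot. Expanding both sides via \eqref{mainf}, applying the $n$-graded Leibniz rules \eqref{bibi} and \eqref{bibi+} for $\double{-}{-}$, and invoking the intertwining identities \eqref{ci23} and \eqref{ci24} from Lemma~\ref{propbiv} converts one side into the other. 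The $n$-graded antisymmetry \eqref{antis} on generators then follows by combining the double antisymmetry \eqref{flip} of $\double{-}{-}$ with cyclicity of $v$, rewritten as $\alpha_\beta \otimes \beta^\alpha = \alpha^\beta \otimes \beta_\alpha$. Once \eqref{mainf} respects the defining relations of $A_M$ in each slot, iterated application of \eqref{poisson1}--\eqref{poisson2} produces a well-defined biderivation on the whole algebra.

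To compute the Jacobi form, expand $\bracket{a_\alpha}{\bracket{b_\beta}{c_\gamma}_v}_v$ by first applying \eqref{mainf} to the inner bracket, then distributing the outer bracket over the product $\double{b}{c}'_{\beta_\gamma} \double{b}{c}''_{\gamma^\beta}$ via \eqref{poisson1}, and finally applying \eqref{mainf} a second time. The intertwining identities \eqref{ci23}--\eqref{ci24} allow one to rewrite the resulting iterated subscripts in the canonical form $\alpha_{(\beta_\gamma)} \otimes (\beta_\gamma)^\alpha \otimes \gamma^\beta$ that appears in \eqref{IMP1}. Summing over the three cyclic permutations of $(a_\alpha, b_\beta, c_\gamma)$ and regrouping the six resulting terms by the action of $P_{312}$ on the tensor factors yields the tribracket \eqref{tribracket} evaluated at $(a,b,c)$, producing $Q$. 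The remaining three terms, which come from the Leibniz rule splitting in the opposite order, are rewritten using the double antisymmetry \eqref{flip} and cyclicity of $v$ to interchange the roles of $b$ and $c$; this produces $\triple{a}{c}{b}$ with the sign shown in \eqref{IMP2}, contributing $-R$.

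The main obstacle is Koszul sign bookkeeping in the Jacobi computation: the signs from the $n$-graded permutations $P_{312,n}$ built into \eqref{tribracket}, from the Leibniz rule \eqref{poisson1} applied to elements of shifted degree (the subscripts $\beta_\gamma, \gamma^\beta$ do not affect degree, but the first tensor factor of $\double{b}{c}$ does), and from the antisymmetry \eqref{flip}, must all be reconciled simultaneously with the coalgebra rearrangements \eqref{ci23}--\eqref{ci24}. Cyclicity of $v$ is used precisely at the point where the three cyclic terms of the Jacobi form must be matched to the three summands in \eqref{tribracket}, since without it the coalgebra subscripts produced by the three cyclic orderings fail to align.
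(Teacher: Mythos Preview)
Your existence argument is essentially the paper's: define the pairing on generators by \eqref{mainf}, extend by the Leibniz rules, and use \eqref{ci23}--\eqref{ci24} for compatibility with the multiplicativity relation and cyclicity for the antisymmetry. That part is fine, though you should note that compatibility with the \emph{commutativity} relations of $A_M$ also needs a line of verification.

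The Jacobi-form computation has a real gap. You write that the intertwining identities \eqref{ci23}--\eqref{ci24} ``allow one to rewrite the resulting iterated subscripts in the canonical form $\alpha_{(\beta_\gamma)}\otimes(\beta_\gamma)^\alpha\otimes\gamma^\beta$.'' They do not. Those identities govern the interaction of a \emph{single} application of $\widehat v$ with the comultiplication $\mu$, and they hold for \emph{any} bilinear form $v$, cyclic or not; they cannot produce the required matching. When you expand, say, $(-1)^{\vert a\vert_n\vert b\vert_n}\bracket{b_\beta}{\bracket{c_\gamma}{a_\alpha}}$, the coalgebra subscripts come out as $\beta_{(\gamma_\alpha)},\,(\gamma_\alpha)^\beta,\,\alpha^\gamma$, and to identify this with the corresponding summand of $Q$ you need
\[
\beta_{(\gamma_\alpha)}\otimes(\gamma_\alpha)^\beta\otimes\alpha^\gamma \;=\; (\beta_\gamma)^\alpha\otimes\gamma^\beta\otimes\alpha_{(\beta_\gamma)},
\]
an identity involving \emph{two} applications of $\widehat v$ and a permutation. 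Writing $X=\widehat v\otimes\id_M$ and $Y=\id_M\otimes\widehat v$, this amounts to $p_{312}XY=XYp_{312}$; a second identity $Yp_{132}X=Xp_{132}Y$ is needed to handle the $R$-terms. Neither follows from \eqref{ci23}--\eqref{ci24}; both are proved by a direct computation that uses cyclicity of $v$ in the form \eqref{forcyc} (and, for the second, the commutation $\sigma X=X\sigma$ coming from $\widehat v p_{21}=p_{21}\widehat v$). This is the actual content of the Jacobi-form formula, and your sketch bypasses it.
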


\begin{proof}   The uniqueness of $ \bracket{-}{-}_v$ is obvious because the symbols ${a_\alpha}$ generate $ A_M$. To prove the existence,   consider the graded algebra $\A$ generated by the symbols $\{ a_\alpha\, \vert \, a\in A, \alpha \in M\}$ with $\vert a_\alpha\vert =\vert a\vert$ subject   to the bilinearity relations. The kernel of the natural projection $\A\to A_M$   is generated by the   multiplicativity
 relations  and the commutativity relations. It is clear that there is  a unique bilinear  pairing    $\bracket{-}{-}:\A\otimes \A\to A_M$  given by   \eqref{mainf} on the generators and satisfying the Leibniz rules. We claim that this  pairing  is antisymmetric and  the kernel of the projection $\A\to A_M$ lies in its   annihilator. This will imply the existence of $ \bracket{-}{-}_v$.

 Pick any homogeneous $a,b\in   A $ and set $x= \double{a}{b} \in A\otimes A$. For   $\alpha, \beta \in M$,
$$ \bracket{a_\alpha}{b_\beta}=x'_{\alpha_\beta} x''_{\beta^\alpha }=x'_{\alpha^\beta} x''_{\beta_\alpha } =(-1)^{ \vert x' \vert \vert x'' \vert} x''_{\beta_\alpha} x'_{\alpha^\beta}  $$
$$=-(-1)^{\vert a \vert_n \vert b \vert_n} \double{b}{a}'_{\beta_\alpha} \double{b}{a}''_{\alpha^\beta}=-(-1)^{\vert a \vert_n \vert b \vert_n} \bracket{b_\beta}{a_\alpha} $$
where the penultimate  equality follows from the antisymmetry of $\double{-}{-}$. Therefore, the   bracket $\bracket{-}{-}:\A\otimes \A\to A_M$ is antisymmetric in the sense that $\bracket{f}{g}=-(-1)^{\vert f\vert_n \vert g\vert_n} \bracket{g}{f}$ for any homogeneous $f,g\in \A$.

  Pick     homogeneous $ a,b, c\in   A $ and set $x= \double{a}{b}$, $y=\double{a}{c}$. Then
$$\double{a}{bc}=xc+ (-1)^{\vert a\vert_n \vert b\vert} by= x'\otimes x''c + (-1)^{\vert a\vert_n \vert b\vert} by'\otimes y''.$$ For      $\alpha, \beta \in M$,
$$\bracket{a_\alpha}{(bc)_\beta}= \double{a}{bc}'_{\alpha_\beta} \double{a}{bc}''_{\beta^\alpha}=x'_{\alpha_\beta} (x''c)_{\beta^\alpha}
+(-1)^{\vert a\vert_n \vert b\vert} (by')_{\alpha_\beta } y''_{\beta^\alpha}$$
$$
=x'_{\alpha_\beta} x''_{ (\beta^\alpha)^1} c_{ (\beta^\alpha)^2}+ (-1)^{\vert a\vert_n \vert b\vert} b_{(\alpha_\beta)^1} y'_{(\alpha_\beta)^2} y''_{\beta^\alpha}.$$
On the other hand,
$$\bracket{a_\alpha}{ b_{\beta^1} c_{\beta^2}}= \bracket{a_\alpha}{ b_{\beta^1} } c_{\beta^2}+  (-1)^{\vert a  \vert_n \vert  b  \vert}  b_{\beta^1} \bracket{a_\alpha}{ c_{\beta^2}}$$
$$=x'_{\alpha_{(\beta^1)}} x''_{(\beta^1)^\alpha} c_{\beta^2}+  (-1)^{\vert a  \vert_n \vert  b  \vert}  b_{\beta^1} y'_{\alpha_{(\beta^2)}} y''_{(\beta^2)^\alpha}.$$
 Now, the identities \eqref{ci23} and \eqref{ci24} imply that $\bracket{a_\alpha}{(bc)_\beta}=\bracket{a_\alpha}{ b_{\beta^1} c_{\beta^2}}$.
Therefore, the  bracket $\bracket{-}{-}:\A\otimes \A\to A_M$ annihilates   the multiplicativity relations. It also annihilates   the commutativity relations:
  \begin{eqnarray*} \bracket{a_\alpha}{b_\beta  c_\gamma}  & {=}&  \bracket{a_\alpha}{b_\beta }  c_\gamma + (-1)^{\vert a\vert_n \vert b\vert} b_\beta \bracket{a_\alpha}{  c_\gamma}      \\
&=& (-1)^{\vert ab\vert_n  \vert c\vert }  c_\gamma  \bracket{a_\alpha}{b_\beta }   + (-1)^{\vert a\vert_n \vert b\vert+\vert ac\vert_n \vert b\vert } \bracket{a_\alpha}{ c_\gamma} b_\beta   \\
&=&  (-1)^{\vert c\vert  \vert b\vert } \bracket{a_\alpha}{ c_\gamma} b_\beta +(-1)^{\vert a   b \vert_n   \vert c\vert }  c_\gamma  \bracket{a_\alpha}{b_\beta }     \\
&=&  \{ {a_\alpha},   (-1)^{\vert b\vert  \vert c\vert } c_\gamma b_\beta\} .
 \end{eqnarray*}

It remains only to prove \eqref{IMP1-}. In the rest of the argument, we write   $\bracket{-}{-}$ for $\bracket{-}{-}_v$.  The identities $P_{312,n}^{-1}=P_{312,n}^{2}=P_{231,n} $  imply that
\begin{eqnarray*}
\triple{a}{b}{c}&=& \double{a}{\double{b}{c}'}\otimes \double{b}{c}''
+  (-1)^{\vert a \vert_n \vert bc\vert} P_{312} \left (\double{b}{\double{c}{a}'} \otimes \double{c}{a}''\right) \\
&&+ (-1)^{ \vert ab\vert \vert c \vert_n} P_{231} \left (\double{c}{\double{a}{b}'} \otimes \double{a}{b}''\right)\\
&=& \double{a}{\double{b}{c}'}'\otimes \double{a}{\double{b}{c}'}''\otimes \double{b}{c}''\\
&&+ (-1)^{\vert a \vert_n \vert bc\vert}
P_{312} \left (\double{b}{\double{c}{a}'}' \otimes \double{b}{\double{c}{a}'}''   \otimes \double{c}{a}''\right)\\
&& + (-1)^{ \vert ab\vert \vert c \vert_n} P_{231}  \left (\double{c}{\double{a}{b}'}'
\otimes \double{c}{\double{a}{b}'}''  \otimes \double{a}{b}''\right).
\end{eqnarray*}
Using the  commutativity of $A_{M}$, we deduce that for any $\varphi, \psi  , \rho\in M$,
\begin{eqnarray}
\label{abcfff} && {\triple {a}  {b}{c}}_{\varphi}'    {\triple {a}  {b}{c}}_{\psi  }'' {\triple {a}  {b}{c}}_{\rho}'''\\
\notag &=&\double{a}{\double{b}{c}'}'_{\varphi} \double{a}{\double{b}{c}'}''_{\psi  } \double{b}{c}''_{\rho}\\
\notag &&+ (-1)^{\vert a \vert_n \vert bc\vert}
\double{b}{\double{c}{a}'}'_{\psi  }  \double{b}{\double{c}{a}'}''_{\rho}  \double{c}{a}''_{\varphi}\\
\notag && + (-1)^{ \vert ab\vert \vert c \vert_n}  \double{c}{\double{a}{b}'}'_{\rho}
\double{c}{\double{a}{b}'}''_{\varphi}   \double{a}{b}''_{\psi  }.
\end{eqnarray}
Setting  $\varphi={\alpha_{(\beta_\gamma)}}$, $\psi={{(\beta_\gamma)^\alpha}}$, $\rho={{\gamma^\beta}}$ and multiplying by $(-1)^{\vert a
\vert_n \vert   c\vert_n }$, we obtain
$$ Q= (-1)^{\vert a
\vert_n \vert   c\vert_n } u_1 + (-1)^{\vert a \vert_n \vert b \vert_n}
u_2 + (-1)^{ \vert  b\vert_n \vert c \vert_n} u_3 $$
where
\begin{eqnarray}
\label{abcggg}
\notag u_1 &=&\double{a}{\double{b}{c}'}'_{\alpha_{(\beta_\gamma)}} \double{a}{\double{b}{c}'}''_{{(\beta_\gamma)^\alpha}} \double{b}{c}''_{{\gamma^\beta}}\\
\notag u_2 &=&
\double{b}{\double{c}{a}'}'_{{(\beta_\gamma)^\alpha}}  \double{b}{\double{c}{a}'}''_{{\gamma^\beta}}  \double{c}{a}''_{\alpha_{(\beta_\gamma)}}\\
\notag u_3 &=&   \double{c}{\double{a}{b}'}'_{{\gamma^\beta}}
\double{c}{\double{a}{b}'}''_{\alpha_{(\beta_\gamma)}}   \double{a}{b}''_{{(\beta_\gamma)^\alpha}}.
\end{eqnarray}
Permuting $b$ and $c$ in \eqref{abcfff}, we similarly obtain that
$$R=(-1)^{\vert   ab \vert
\vert   c \vert_n } t_1 + (-1)^{ \vert ac\vert \vert b \vert_n}
t_2 + (-1)^{  \vert  bc\vert \vert a  \vert_n} t_3$$
where
\begin{eqnarray}
\notag t_1 &=&\double{a}{\double{c}{b}'}'_{{\alpha_{(\gamma^\beta)}}} \double{a}{\double{c}{b}'}''_{{{(\gamma^\beta)^\alpha}}} \double{c}{b}''_{{\beta_\gamma}}\\
\notag t_2 &=&
\double{c}{\double{b}{a}'}'_{{{(\gamma^\beta)^\alpha}}}  \double{c}{\double{b}{a}'}''_{{\beta_\gamma}}  \double{b}{a}''_{{\alpha_{(\gamma^\beta)}}}\\
\notag t_3 &=&  \double{b}{\double{a}{c}'}'_{{\beta_\gamma}}
\double{b}{\double{a}{c}'}''_{{\alpha_{(\gamma^\beta)}}}   \double{a}{c}''_{{{(\gamma^\beta)^\alpha}}}.
\end{eqnarray}

We next compute $ \{a_{\alpha}, b_{\beta} , c_{\gamma}\}_v$.
Set $x=\double {b} {c}\in A^{\otimes 2}$ and observe that
$$
\{ {a_{\alpha}}, {\bracket {b_{\beta}} {c_{\gamma}}} \}
=\{ {a_{\alpha}},{x'_{{\beta_\gamma}} x''_{{\gamma^\beta}}} \}= \{ {a_{\alpha}}, {x'_{{\beta_\gamma}} } {x}''_{{\gamma^\beta}}\} + (-1)^{\vert a\vert_n  \vert x'\vert}  x'_{{\beta_\gamma}} \{ {a_{\alpha}} , {{x}''_{{\gamma^\beta}}}\}$$
$$
= \double {a}  {x'}'_{\alpha_{(\beta_\gamma)}}  \double{a}{ x'}''_{{(\beta_\gamma)^\alpha}} {x}''_{{\gamma^\beta}}
+ (-1)^{\vert a\vert_n  \vert x'\vert}  x'_{{\beta_\gamma}} \double {a}  {x''}'_{{\alpha_{(\gamma^\beta)}}}  \double{a}{ x''}''_{{{(\gamma^\beta)^\alpha}}}.
$$
  We   rewrite  the last summand as follows.  Note that
$$\vert \double {a}  {x''}'_{{\alpha_{(\gamma^\beta)}}}
\double{a}{ x''}''_{{{(\gamma^\beta)^\alpha}}} \vert = \vert \double {a}  {x''}'
\double{a}{ x''}''  \vert=\vert a \vert +\vert x''\vert +n =\vert a \vert_n +\vert x''\vert  .$$
Also, by \eqref{flip},
$x'\otimes x'' =  -(-1)^{\vert b\vert_n \vert c\vert_n+\vert y'\vert \vert y''\vert} y''\otimes y'$ where $y=  \double{c}{b}$. These formulas and the   commutativity of $A_{M}$ imply that
$$(-1)^{\vert a\vert_n  \vert x'\vert}  x'_{{\beta_\gamma}} \double {a}  {x''}'_{{\alpha_{(\gamma^\beta)}}}
\double{a}{ x''}''_{{{(\gamma^\beta)^\alpha}}}$$
$$= (-1)^{   \vert x'\vert   \vert x''\vert  } \double {a}  {x''}'_{{\alpha_{(\gamma^\beta)}}}
\double{a}{ x''}''_{{{(\gamma^\beta)^\alpha}}} x'_{{\beta_\gamma}} $$
$$=  -(-1)^{\vert b\vert_n \vert c\vert_n}
\double {a}  {y'}'_{{\alpha_{(\gamma^\beta)}}}
\double{a}{ y'}''_{{{(\gamma^\beta)^\alpha}}} y''_{{\beta_\gamma}}. $$
As a result, we obtain  that
$$
(-1)^{\vert a\vert_n \vert c\vert_n} \bracket {a_{\alpha}} {\bracket {b_{\beta}} {c_{\gamma}}}
 =(-1)^{\vert a\vert_n \vert c\vert_n} w_1
  - (-1)^{\vert ab\vert  \vert c\vert_n} z_1  $$
where
\begin{eqnarray} \label{abc_simple2}
 \notag w_1 &=&  \double {a}  {\double{b}{c}'}'_{\alpha_{(\beta_\gamma)}}  \double{a}{ \double{b}{c}'}''_{{(\beta_\gamma)^\alpha}} {\double {b}
{c}}''_{{\gamma^\beta}} , \\
 \notag  z_1 &=&\double {a}  {\double {c} {b}'}'_{{\alpha_{(\gamma^\beta)}}}
\double{a}{ \double {c} {b}'}''_{{{(\gamma^\beta)^\alpha}}} \double {c} {b}''_{{\beta_\gamma}}
 .\end{eqnarray}
Cyclically permuting $a_{\alpha}, {b_{\beta}}, c_{\gamma}$, we obtain
\begin{eqnarray}
 \notag (-1)^{\vert a\vert_n \vert b\vert_n} \bracket {b_{\beta}}  {\bracket  {c_{\gamma}} {a_{\alpha}}}
&=& (-1)^{\vert a\vert_n \vert b\vert_n} w_2- (-1)^{ \vert bc\vert \vert a\vert_n } z_2,\\
 \notag (-1)^{\vert b\vert_n \vert c\vert_n}\bracket  {c_{\gamma}}  {\bracket  {a_{\alpha}}  {b_{\beta}}}
&=&(-1)^{\vert b\vert_n \vert c\vert_n} w_3- (-1)^{ \vert ac\vert \vert b\vert_n } z_3
  \end{eqnarray}
where
\begin{eqnarray}
\notag
w_2 &=&\double {b}  {\double {c} {a}'}'_{ \beta_{(\gamma_\alpha)}}  \double{b}{ \double {c} {a}'}''_{{(\gamma_\alpha)^\beta}} {\double {c}{a}}''_{\alpha^\gamma},\\
\notag z_2 &=&      \double {b}  {\double {a} {c}'}'_{ \beta_{(\alpha^\gamma)}}
\double{b}{ \double {a} {c}'}''_{(\alpha^\gamma)^\beta  } \double {a} {c}''_{ \gamma_\alpha },\\
\notag
w_3 &=&\double {c}  {\double {a} {b}'}'_{ \gamma_{(\alpha_\beta)}}  \double{c}{ \double {a} {b}'}''_{{(\alpha_\beta)^\gamma}} {\double {a}{b}}''_{\beta^\alpha},\\
\notag z_3 &=&      \double {c}  {\double {b} {a}'}'_{ \gamma_{(\beta^\alpha)}}
\double{c}{ \double {b} {a}'}''_{(\beta^\alpha)^\gamma  } \double {b} {a}''_{ \alpha_\beta }.
\end{eqnarray}
Below we prove that    \begin{equation*}\label{sdf} w_1=u_1,  \quad  w_2=u_2, \quad w_3=u_3, \quad z_1=t_1, \quad  z_2=t_3, \quad z_3=t_2. \end{equation*}
Therefore $$\{a_{\alpha}, b_{\beta} , c_{\gamma}\}_v=$$  $$(-1)^{\vert a\vert_n \vert c\vert_n} \bracket {a_{\alpha}} {\bracket {b_{\beta}} {c_{\gamma}}}+ (-1)^{\vert a\vert_n \vert b\vert_n} \bracket {b_{\beta}}  {\bracket  {c_{\gamma}} {a_{\alpha}}}
+(-1)^{\vert b\vert_n \vert c\vert_n}\bracket  {c_{\gamma}}  {\bracket  {a_{\alpha}}  {b_{\beta}}}$$
$$=
(-1)^{\vert a\vert_n \vert c\vert_n} u_1
  + (-1)^{\vert a\vert_n \vert b\vert_n} u_2  +(-1)^{\vert b\vert_n \vert c\vert_n} u_3 $$
  $$ - (-1)^{\vert ab\vert  \vert c\vert_n} t_1- (-1)^{ \vert bc\vert \vert a\vert_n } t_3 - (-1)^{ \vert ac\vert \vert b\vert_n } t_2=Q-R.$$

 The equalities $w_1=u_1$ and $z_1=t_1$ are tautological.   The formula $w_2=u_2$ would follow from the identity
  \begin{equation}\label{F--F}  \beta_{(\gamma_\alpha)}  \otimes (\gamma_\alpha)^\beta\otimes {\alpha^\gamma}=(\beta_\gamma)^\alpha \otimes \gamma^\beta\otimes \alpha_{(\beta_\gamma)}.\end{equation}
  Similarly, the formulas $w_3=u_3,  z_2=t_3,  z_3=t_2$
  would follow from the identities
 \begin{equation}\label{F--F+1}  \gamma_{(\alpha_\beta)} \otimes (\alpha_\beta)^\gamma \otimes \beta^\alpha=\gamma^\beta \otimes \alpha_{(\beta_\gamma)}\otimes (\beta_\gamma)^\alpha, \end{equation}
 \begin{equation}\label{F--F+2} { \beta_{(\alpha^\gamma)}} \otimes {(\alpha^\gamma)^\beta  } \otimes { \gamma_\alpha }=   {{\beta_\gamma}}
\otimes {{\alpha_{(\gamma^\beta)}}}   \otimes {{{(\gamma^\beta)^\alpha}}},\end{equation}
\begin{equation}\label{F--F+3} { \gamma_{(\beta^\alpha)}} \otimes {(\beta^\alpha)^\gamma  } \otimes { \alpha_\beta }= {{{(\gamma^\beta)^\alpha}}}  \otimes {{\beta_\gamma}}  \otimes {{\alpha_{(\gamma^\beta)}}}.\end{equation}
Formula \eqref{F--F+1} is deduced from \eqref{F--F} by replacing $\alpha, \beta, \gamma$ with $\gamma, \alpha, \beta$, respectively,  and permuting the tensor factors. Formula \eqref{F--F+2} is deduced from \eqref{F--F+3} by replacing $\alpha, \beta, \gamma$ with $ \beta, \gamma, \alpha$, respectively, and permuting the tensor factors. It remains to prove \eqref{F--F} and \eqref{F--F+3}.

  We rewrite \eqref{F--F} in the equivalent form
  \begin{equation}\label{F--} {\alpha^\gamma}\otimes  \beta_{(\gamma_\alpha)}  \otimes (\gamma_\alpha)^\beta=  \alpha_{(\beta_\gamma)} \otimes (\beta_\gamma)^\alpha \otimes \gamma^\beta .\end{equation}
  Set   $${X}=\widehat v \otimes \id_M \in \End M^{\otimes 3}  \quad {\text {and}} \quad {Y}=\id_M \otimes \widehat v \in \End M^{\otimes 3}.$$
The sides of \eqref{F--} are the images of $\beta \otimes \gamma\otimes \alpha\in M^{\otimes 3}$  under  $p_{312} {X} {Y}$ and ${X}{Y}p_{312}$. Thus,   \eqref{F--} follows from the   equality \begin{equation}\label{F11} p_{312} {X} {Y}= {X}{Y}p_{312}  \end{equation}
   which we now prove.  For   $\alpha, \beta, \gamma \in M$,
\begin{eqnarray*}
    XY(\alpha\otimes \beta\otimes \gamma)
  &=&  X (  v(\beta \otimes \gamma^2) \alpha\otimes \gamma^1 \otimes \gamma^3 ) \\
\notag &=& v(\alpha \otimes \gamma^2) v(\beta \otimes \gamma^4)   \gamma^1 \otimes \gamma^3 \otimes \gamma^5  .
\end{eqnarray*}
Hence,
$$p_{312} XY(\alpha\otimes \beta\otimes \gamma)=v(\alpha \otimes \gamma^2)
v(\beta \otimes \gamma^4)    \gamma^5 \otimes \gamma^1 \otimes \gamma^3.$$
Similarly, \begin{eqnarray*}
    XY p_{312} (\alpha\otimes \beta\otimes \gamma)
\notag &=&  XY (\gamma \otimes \alpha\otimes \beta) \\
\notag &=&   v( \alpha \otimes \beta^4 )v( \gamma \otimes \beta^2 )    \beta^1 \otimes  \beta^3 \otimes  \beta^5.
\end{eqnarray*}
We must  show that \begin{equation}\label{loc34}  v( \alpha \otimes \beta^4 )v( \gamma \otimes \beta^2 )    \beta^1 \otimes  \beta^3 \otimes  \beta^5= v(\alpha \otimes \gamma^2) v(\beta \otimes \gamma^4)   \gamma^5 \otimes \gamma^1 \otimes \gamma^3  . \end{equation}
To this end, we take \eqref{forcyc} and replace $\alpha $  with   $\gamma$. This gives
 $$v(\gamma \otimes \beta^2) \beta^1 \otimes \beta^3 =  v(\beta \otimes \gamma^2) \gamma^3 \otimes \gamma^1 .$$
 Applying $\id_M \otimes {{\mu}}^3$ to both sides, we obtain that
 $$v(\gamma \otimes \beta^2) \beta^1 \otimes \beta^3 \otimes \beta^4 \otimes \beta^5=  v(\beta \otimes \gamma^4) \gamma^5 \otimes \gamma^1 \otimes \gamma^2\otimes \gamma^3 .$$
 We apply to both sides the linear map $M^{\otimes 4}\to M^{\otimes 3}$  carrying any  $x\otimes y\otimes z \otimes t$ to $v(\alpha \otimes z)
 x\otimes y  \otimes t$. This gives \eqref{loc34} and completes the proof of  \eqref{F--F}.

To prove \eqref{F--F+3}, we rewrite  it in the following equivalent   form:
 \begin{equation*}  { \alpha_\beta } \otimes { \gamma_{(\beta^\alpha)}} \otimes {(\beta^\alpha)^\gamma  }= {{\alpha_{(\gamma^\beta)}}} \otimes {{{(\gamma^\beta)^\alpha}}}  \otimes {{\beta_\gamma}} .\end{equation*}
 The latter   can be reformulated in terms of the maps $X,Y$ as the equality \begin{equation}\label{F14}  {Y} p_{132} {X}= {X} p_{132} {Y}    . \end{equation}
  To prove \eqref{F14}, set $\sigma =p_{213}$  and $\tau=p_{132}$. Clearly $\sigma^2=\tau^2=1$, $\sigma \tau \sigma=\tau \sigma\tau$, $p_{312}=\sigma \tau$.  It follows from the definition of $X$ and $Y$ that ${Y}=p_{312} {X} p_{312}^{-1}=\sigma\tau {X} \tau\sigma$. Formula \eqref{F11}
   may be rewritten in this notation as
   $$ \sigma \tau {X} \sigma\tau {X} \tau\sigma={X} \sigma\tau {X} \tau\sigma \sigma\tau ={X} \sigma\tau {X}.$$
 The cyclicity of $v$ gives $\sigma {X}={X} \sigma$ and $\tau Y=Y\tau$.  Then
  $${Y} p_{132} {X} ={Y} \tau {X}  = \sigma \tau {X} \tau\sigma \tau {X} = \sigma \tau {X}  \sigma \tau \sigma {X}$$
  $$=\sigma \tau  {X} \sigma \tau {X} \sigma   =\sigma \tau  {X} \sigma \tau {X} \sigma \tau \sigma \sigma \tau= \sigma \tau  {X} \sigma \tau {X}  \tau \sigma \tau \sigma \tau$$
  \begin{equation*} ={X} \sigma\tau {X} \tau \sigma \tau={X} {Y} \tau=X\tau Y={X}  p_{132} {Y}. \qedhere \end{equation*}
\end{proof}

We can now formulate the main result  of this paper.

\begin{theor}\label{th-main} For any  $n$-graded   double Gerstenhaber bracket $\double{-}{-}$ in a graded algebra  $A$ and any   cyclic bilinear form $v$ on a coalgebra  $M$, the  $n$-graded  biderivation  $\bracket{-}{-}_v$ in the representation algebra $A_M$  is an $n$-graded Gerstenhaber  bracket.
\end{theor}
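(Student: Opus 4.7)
The plan is to deduce the $n$-graded Jacobi identity for $\bracket{-}{-}_v$ from the explicit formula for its Jacobi form obtained in Lemma~\ref{bra}, combined with the vanishing of the tribracket induced by $\double{-}{-}$.

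First, I would invoke Lemma~\ref{bra} to produce the $n$-graded biderivation $\bracket{-}{-}_v$ on $A_M$ together with the identity \eqref{IMP1-} for its Jacobi form on any triple of generators. The expressions $Q$ and $R$ appearing there are built from the components of the induced tribracket $\triple{-}{-}{-}$ on $A$. Since by hypothesis $(A,\double{-}{-})$ is an $n$-graded double Gerstenhaber algebra, this tribracket is identically zero on $A^{\otimes 3}$; in particular $\triple{a}{b}{c}=0$ and $\triple{a}{c}{b}=0$ for any $a,b,c\in A$.

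Next I would observe that for fixed $\phi,\psi,\rho\in M$ the assignment $x\otimes y\otimes z\mapsto x_\phi\, y_\psi\, z_\rho$ defines a $\kk$-trilinear map $A^{\otimes 3}\to A_M$, precisely because of the bilinearity relations in $\widetilde A_M$ (and their quotient in $A_M$). Hence the choice of expansion of $\triple{a}{b}{c}$ and $\triple{a}{c}{b}$ in \eqref{IMP1}--\eqref{IMP2} is immaterial, and the vanishing of the tribracket forces $Q=0$ and $R=0$. Therefore $\{a_\alpha,b_\beta,c_\gamma\}_v = 0$ on every triple of generators of $A_M$.

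To propagate this from generators to all of $A_M$, I would invoke the standard fact that the Jacobi form of any $n$-graded biderivation on a graded commutative algebra is itself an $n$-graded derivation in each of its three arguments. This is a routine consequence of the Leibniz rules \eqref{poisson1}--\eqref{poisson2} together with the antisymmetry \eqref{antis}, obtained by expanding $\bracket{xy}{\bracket{b}{c}}$ and collecting terms. Since the elements $\{a_\alpha\,\vert\, a\in A,\, \alpha\in M\}$ algebraically generate $A_M$ and the triderivation $\{-,-,-\}_v$ vanishes on every triple of such generators, it vanishes on all of $A_M$, proving the $n$-graded Jacobi identity.

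The essential computational work has already been absorbed into Lemma~\ref{bra}, where the cyclicity of $v$ (through the identities \eqref{F--F}--\eqref{F--F+3}) was used to rewrite the Jacobi form of $\bracket{-}{-}_v$ as a tribracket expression. The present theorem is therefore a short epilogue; the only mildly delicate step is the passage from generators to the whole algebra, for which the triderivation property of the Jacobi form supplies the standard reduction.
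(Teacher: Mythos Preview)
Your proposal is correct and follows essentially the same approach as the paper: invoke Lemma~\ref{bra} to compute the Jacobi form on generators, use the vanishing of the tribracket to get $Q=R=0$, and then propagate to all of $A_M$ via the derivation property of the Jacobi form in each argument. The paper records the Leibniz-type identity \eqref{jjja} explicitly and pairs it with the cyclic invariance $\{a,b,c\}_v=\{b,c,a\}_v=\{c,a,b\}_v$, whereas you package this as the Jacobi form being an $n$-graded triderivation; these are the same reduction.
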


\begin{proof}   We need to prove that the  Jacobi form $\{-,-,-\}_v$ in $ A_M$ associated  with the bracket $\bracket{-}{-}_v$ is equal to zero.  Using the $n$-graded Leibniz rules and the   antisymmetry for   $ \bracket{-}{-}_v$, we easily compute   that   the  Jacobi form   satisfies the following Leibniz-type formula: for any $a_1,a_2,b,c\in A_M$, \begin{equation}\label{jjja} \{a_1a_2,b,c\}_v
=(-1)^{\vert a_1  \vert  \vert c \vert_n  } a_1\{a_2, b,c\}_v+   (-1)^{   \vert  a_2 \vert  \vert b  \vert_n} \{a_1, b,c\}_v a_2 .\end{equation}
 This   and the cyclic invariance of the Jacobi form imply
that if this form is zero on the generators of $A_M$, then it is zero on all elements of~$A_M$.  The  theorem now follows from Lemma~\ref{bra} and the assumption  $\triple{-}{-}{-}=0$.
\end{proof}

      For non-graded algebras, Theorem~\ref{th-main} yields Theorem~\ref{intro} of the introduction.

\section{Cyclic
 structures on    algebras and coalgebras}\label{frfr----}

We reformulate cyclic bilinear forms on coalgebras in terms of  so-called    cyclic structures on     coalgebras and  algebras.

\subsection{Cyclic  structures on coalgebras}  A {\it cyclic structure} on a coalgebra  $M$ is a linear map $M\to M^*, \alpha \mapsto \overline \alpha$ such that for any $\alpha, \beta \in M$, \begin{equation}\label{cycl1}   {\overline \alpha} (\beta^2) \beta^1 \otimes \beta^3 =   {\overline \beta} (\alpha^2)   \alpha^3 \otimes \alpha^1 .\end{equation}


   \begin{lemma}  For   $ v\in (M\otimes M)^*$,  we let  $\ad_v: M\to M^*$ be the   left adjoint map carrying any $\alpha\in M$ to the linear map $ M\to \kk, \beta \mapsto v (\alpha \otimes \beta)$. Then:

    (a)    $ v\in (M\otimes M)^*$ is cyclic if and only if   $\ad_v $   is a cyclic structure on $M$;

     (b) the formula $v\mapsto \ad_v$ establishes a bijection of the set of cyclic bilinear forms on $M$ onto the set of cyclic structures on $M$.
\end{lemma}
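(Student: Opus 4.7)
The plan is to observe that both assertions reduce to the standard tensor–hom adjunction combined with an immediate unwinding of definitions. There is essentially no obstacle here; the content is just to check that the cyclicity condition on $v$ translates, under the adjunction $\Hom_\kk(M\otimes M,\kk)\cong \Hom_\kk(M,M^*)$, into the defining equation \eqref{cycl1} of a cyclic structure.

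For part (a), I would write $\overline{\alpha}=\ad_v(\alpha)$ so that by definition $\overline{\alpha}(\beta)=v(\alpha\otimes\beta)$ for all $\alpha,\beta\in M$. Substituting $\beta^2$ and $\alpha^2$ respectively yields $\overline{\alpha}(\beta^2)=v(\alpha\otimes\beta^2)$ and $\overline{\beta}(\alpha^2)=v(\beta\otimes\alpha^2)$. Plugging these two scalars into \eqref{cycl1}, the identity
\[\overline{\alpha}(\beta^2)\,\beta^1\otimes\beta^3 = \overline{\beta}(\alpha^2)\,\alpha^3\otimes\alpha^1\]
becomes, term by term,
\[v(\alpha\otimes\beta^2)\,\beta^1\otimes\beta^3 = v(\beta\otimes\alpha^2)\,\alpha^3\otimes\alpha^1,\]
which is precisely the cyclicity condition \eqref{forcyc} for $v$. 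Hence $v$ is cyclic iff $\ad_v$ is a cyclic structure.

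For part (b), I would invoke the standard tensor–hom adjunction: the assignment $v\mapsto \ad_v$ is a $\kk$-linear isomorphism from $(M\otimes M)^*=\Hom_\kk(M\otimes M,\kk)$ onto $\Hom_\kk(M,M^*)$, with inverse sending a linear map $\varphi:M\to M^*$ to the bilinear form $(\alpha,\beta)\mapsto \varphi(\alpha)(\beta)$. The set of cyclic bilinear forms on $M$ is a submodule of the source, and the set of cyclic structures on $M$ is, by definition, a subset of the target. Part (a) says that $\ad_v$ lies in the latter exactly when $v$ lies in the former, so the bijection restricts to a bijection between these two subsets, proving (b).
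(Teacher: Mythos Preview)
Your proposal is correct and follows essentially the same approach as the paper. The only cosmetic difference is that the paper verifies part (a) at the operator level by computing $\widehat v\,p_{21}(\beta\otimes\alpha)$ and $p_{21}\,\widehat v(\beta\otimes\alpha)$ separately and identifying them with the two sides of \eqref{cycl1}, whereas you use the equivalent scalar form \eqref{forcyc} directly; for (b) both you and the paper simply invoke the tensor--hom adjunction and its explicit inverse.
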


\begin{proof}
 For    $\alpha, \beta\in M$,
 $$ {\widehat v}p_{21} (\beta\otimes \alpha) = {\widehat v}(\alpha\otimes \beta)=  v( \alpha \otimes \beta^2) \beta^1 \otimes \beta^3 =  \ad_v (\alpha) (\beta^2) \beta^1 \otimes \beta^3 $$
 and
$$p_{21} {\widehat v}(\beta\otimes \alpha)=  p_{21} ( v (\beta \otimes \alpha^2) \alpha^1 \otimes \alpha^3) =  v (\beta \otimes \alpha^2)  \alpha^3 \otimes \alpha^1  =  {\ad_v( \beta)}(\alpha^2) \alpha^3 \otimes \alpha^1.$$
Clearly,  $ {\widehat v}p_{21} =p_{21} {\widehat v}$   if and only if $\ad_v$ is a cyclic structure on $M$.
  This proves (a).
Claim  (b) is obvious; the inverse bijection carries a cyclic structure  $M\to M^*, \alpha \mapsto \overline \alpha$ to the  bilinear form   $\alpha \otimes \beta \mapsto \overline \alpha (\beta)$ on $M$.
\end{proof}

\subsection{Cyclic  structures on algebras}\label{fralpp} Dualizing   cyclic structures on coalgebras we obtain the following notion.
A {\it cyclic structure} on  an algebra $\A$ is a linear map $\A^*\to \A, \alpha \mapsto \overline \alpha$ such that $\alpha (a{\overline \beta} b )= \beta (b{\overline \alpha} a ) $ for any $\alpha, \beta \in \A^* $ and $a,b \in \A$. Here and below $\A^*=\Hom_\kk({\mathcal A} ,\kk)$.


Given    a coalgebra $M$, a cyclic structure $ M^{**}\to M^{*} , x\mapsto \overline x$ on the dual algebra $M^*$ induces (under additional assumptions) a cyclic structure on $M$. Recall that   multiplication  in $M^*$ is defined by $(ab)(\alpha)=a(\alpha^1) b(\alpha^2)$ for       $a,b\in M^*$ and  any $\alpha\in M$. Let $e:M\to M^{**}$ be the evaluation map carrying   $\alpha\in M$ to the functional $M^*\to \kk, a\mapsto a(\alpha)$. For     $\alpha, \beta\in M$, $a,b\in M^*$,
$$a(\beta^1) \, \overline {e(\alpha)} (\beta^2)\,   b(\beta^3)=(a\,  \overline {e(\alpha)}\,  b) (\beta)=  e(\beta) (a\,  \overline {e(\alpha)}\,  b) $$
$$ = e(\alpha)( b \, \overline {e(\beta)} a)= ( b \, \overline {e(\beta)} a) (\alpha)= b(\alpha^1) \, \overline {e(\beta)} (\alpha^2)\,   a(\alpha^3).$$
In other words, the evaluations of $a\otimes b$ on the   vectors    \begin{equation}\label{vectros} \overline {e(\alpha)} (\beta^2) \, \beta^1 \otimes \beta^3, \quad \overline {e(\beta)} (\alpha^2)\, \alpha^3 \otimes \alpha^1 \in M\otimes M \end{equation} are equal. Since this holds for all $a,b\in M^*$, we can deduce the following: if the underlying module of $M$ is free, then the vectors \eqref{vectros} are equal for all $\alpha, \beta \in M$. This means that   $\overline e: M\to M^*$ is a cyclic structure on   $M$.

We will often focus on algebras and coalgebras whose underlying modules are free of finite rank.
Any such algebra $\A$ is dual to a well defined  coalgebra $( M= \A^*,{{\mu}})$ where   ${{\mu}}: M \to M\otimes M$ is   determined by the condition that  the evaluation of ${{\mu}}(\alpha)$ on $a\otimes b$ is equal to $\alpha(ab)$ for any $\alpha\in M$,  $a,b\in \mathcal A$.    The evaluation  map $ M\to M^{**}$ is an isomorphism and we use it to identify $ M^{**}$ with~$M$. The arguments above show  that
cyclic structures on $M$ and   $\A$ are the same maps  $M\to \A $.  The
 cyclic bilinear form  $ v: M\otimes M\to \kk$   associated with a cyclic structure $M\to \A , \alpha \mapsto \overline \alpha$   is  computed by $  v( \alpha \otimes \beta )=\beta( {\overline \alpha} )$ for $ \alpha, \beta \in M$.

\subsection{Example}\label{exaafral} In   Example \ref{newexa}.2, any conjugation-invariant function  $F:G\to \kk$ determines a cyclic structure  $M=\A^* \to \A$ on~$\A$  and on $M$   by $$\overline {\delta_g}=\sum_{z\in G} F(gz) z \in \A  \quad {\rm { for }} \quad g\in G .$$
The cyclic identity is easily verified on the basis vectors: for  any $a,b, g,h\in G$,
  \begin{eqnarray*} \delta_g (a \overline {\delta_h} b)  & {=}&  \delta_g (  \sum_{z\in G} F(h z) a z b)    = F(h a^{-1} g b^{-1})   \\
&=&  F(g b^{-1} h a^{-1})   = \delta_h (  \sum_{z\in G} F(g z) b z a) =\delta_h (b \overline {\delta_g}a).
 \end{eqnarray*}
  The associated
  cyclic bilinear form $ v  $ on $M$  is computed by $v (\delta_g\otimes \delta_h)=F(gh)$ for   $g,h \in G$.
   Given a graded algebra $A$ with double bracket $\double{-}{-}$,   the      bracket $\bracket{-}{-}_v$ in $A_M$    is computed from   \eqref{mainf}: for $a,b\in A$ and $g,h \in G $,
$$\bracket{a_{ g}}{b_{h}}_v=  \sum_{x,y\in G  } F(gx^{-1} h y^{-1}) \double{a}{b}'_{x} \double{a}{b}''_{y}.$$
 Note that a   linear representation $\rho:G\to  GL_N(\kk)$   with $N\geq 1$ determines  a conjugation-invariant function $   G\to \kk, g\mapsto  \tr (\rho(g))$. The corresponding cyclic bilinear form   on~$M$ carries $\delta_g\otimes \delta_h$ to $\tr(\rho(gh))$ for   any $g,h \in G$.

\section{Cyclic bilinear forms from Frobenius algebras}\label{FFFF}

We show how to derive cyclic bilinear forms   from symmetric Frobenius algebras. We begin by recalling basic definitions concerning Frobenius algebras.

\subsection{Frobenius algebras} A $\kk$-valued bilinear form is   {\it non-degenerate} if its left adjoint map is an isomorphism.
  A {\it Frobenius algebra} is a pair  consisting of  an   algebra ${{{{\mathcal A}}}}$ whose underlying module is free of finite rank   and   a non-degenerate bilinear form $ (-,-): {{{{{\mathcal A}}}}} \times {{{{{\mathcal A}}}}}\to \kk$   such that $ ( ab, c)= ( a, bc)$ for any $a,b,c\in {{{{\mathcal A}}}}$.  The algebra ${{{{\mathcal A}}}}$ is not required to be graded or unital. The form $ (-,-)$ is called  the {\it Frobenius pairing}.

A Frobenius algebra $ {{{{\mathcal A}}}} $ is {\it symmetric} if the Frobenius pairing is symmetric, i.e., $  ( a, b )=  ( b,a)$ for any $a,b\in {{{{{\mathcal A}}}}}$.
An example of a  symmetric Frobenius algebra is provided by the ring $\kk$ with the ring multiplication in the role of the Frobenius pairing.
 Any unital commutative  Frobenius algebra ${{{{\mathcal A}}}}$ is   symmetric: for $a,b\in {{{{\mathcal A}}}}$,
$$(a, b)=(a , b 1_{{{{{\mathcal A}}}}})=(ab, 1_{{{{{\mathcal A}}}}})=(ba, 1_{{{{{\mathcal A}}}}})=(b, a1_{{{{{\mathcal A}}}}})= (b,a).$$


\subsection{From Frobenius algebras to cyclic forms}\label{frofro}  The following theorem derives from every symmetric Frobenius algebra a cyclic bilinear form on the dual coalgebra.

  \begin{theor}\label{CQPS+}     Let $ {{{{\mathcal A}}}} $ be a symmetric Frobenius algebra with Frobenius pairing $(-,-)$.
      For any $\alpha\in {{{{{\mathcal A}}}}}^*$, there is a unique  ${\overline \alpha}\in {{{{\mathcal A}}}} $ such that $\alpha (a)=  ( a,  {\overline \alpha} )   $ for all $a\in {{{{\mathcal A}}}}$. The map ${{{{{\mathcal A}}}}}^*\to  {{{{{\mathcal A}}}}}, \alpha \mapsto \overline \alpha$ is a cyclic structure on ${{{{{\mathcal A}}}}}$ and on ${{{{{\mathcal A}}}}}^*$.
      The associated cyclic
   bilinear form $  v $ on   ${{{{{\mathcal A}}}}}^*  $   
  is given   by $v (\alpha \otimes \beta)=\beta( {\overline \alpha} )= (\overline \alpha, \overline \beta)$ for any   $ \alpha, \beta \in {{{{{\mathcal A}}}}}^*$.
\end{theor}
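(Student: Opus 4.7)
The plan is to proceed in four short steps: build $\overline{\alpha}$ from non-degeneracy, verify the cyclic identity on $\A$ by a short Frobenius manipulation, transfer to $\A^*$ via the identification made in Section~\ref{fralpp}, and then read off the bilinear form from that identification.

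First, existence and uniqueness of $\overline{\alpha}$: since the pairing is non-degenerate and the underlying module of $\A$ is free of finite rank, the right adjoint $\A \to \A^*$, $x \mapsto (-,x)$ is a $\kk$-linear isomorphism. We define $\overline{\alpha} \in \A$ as the unique preimage of $\alpha\in \A^*$ under this isomorphism, which is exactly the required characterization $\alpha(a) = (a,\overline{\alpha})$ for all $a\in \A$.

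Second, the cyclic identity on $\A$: I would verify $\alpha(a\overline{\beta}b) = \beta(b\overline{\alpha}a)$ directly from the definition of $\overline{\alpha},\overline{\beta}$, the associativity $(uv,w)=(u,vw)$ of the Frobenius pairing, and its symmetry. Concretely,
\[
\alpha(a\overline{\beta}b) = (a\overline{\beta}b,\overline{\alpha}) = (a,\overline{\beta}b\overline{\alpha}) = (\overline{\beta}b\overline{\alpha},a) = (\overline{\beta}, b\overline{\alpha}a) = (b\overline{\alpha}a,\overline{\beta}) = \beta(b\overline{\alpha}a),
\]
where the second and fourth equalities use associativity and the third and fifth use symmetry of $(-,-)$. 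This is the one short computation on which the whole theorem rests.

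Third, to transfer to $\A^*$: the penultimate paragraph of Section~\ref{fralpp} notes that when $\A$ is free of finite rank, the evaluation $e:M=\A^*\to M^{**}=\A$ is an isomorphism and cyclic structures on $\A$ and on $M$ coincide as linear maps $\A^*\to \A$. Hence the map $\alpha\mapsto \overline{\alpha}$ constructed above is automatically a cyclic structure on $M=\A^*$ as well, so by Lemma on cyclic structures versus cyclic bilinear forms, it induces a cyclic bilinear form $v$ on~$M$. Finally, the same paragraph of Section~\ref{fralpp} gives the explicit formula $v(\alpha\otimes\beta) = \beta(\overline{\alpha})$, and substituting the defining relation for $\overline{\beta}$ yields $\beta(\overline{\alpha}) = (\overline{\alpha},\overline{\beta})$, completing the theorem. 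I expect no serious obstacle here: the only subtle point is recognizing that the cyclicity on $M$ is a free consequence of the identification in Section~\ref{fralpp} once the short Frobenius computation above is in hand.
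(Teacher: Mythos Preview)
Your proposal is correct and follows essentially the same approach as the paper: non-degeneracy gives $\overline{\alpha}$, a short chain of Frobenius associativity and symmetry steps verifies the cyclic identity $\alpha(a\overline{\beta}b)=\beta(b\overline{\alpha}a)$, and the identification in Section~\ref{fralpp} handles the transfer to $\A^*$ and the formula for $v$. The only cosmetic difference is the particular order in which you shuffle factors across the pairing; the paper writes $(a\overline{\beta}b,\overline{\alpha})=(a\overline{\beta},b\overline{\alpha})=(b\overline{\alpha},a\overline{\beta})=(b\overline{\alpha}a,\overline{\beta})$, but your chain is equally valid.
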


\begin{proof} The existence and uniqueness of ${\overline \alpha} $ follows from the non-degeneracy of the Frobenius pairing. For any $\alpha, \beta \in {{{{{\mathcal A}}}}}^* $ and $a,b \in  {{{{\mathcal A}}}}$,
\begin{equation*} \alpha (a{\overline \beta} b )=  ( a{\overline \beta} b, {\overline \alpha} )=  ( a{\overline \beta}, b  {\overline \alpha} )
=  (  b  {\overline \alpha}, a{\overline \beta} )=  (  b  {\overline \alpha}  a, {\overline \beta} )=\beta (b{\overline \alpha} a).\end{equation*}
The computation of $v$ follows from the remark  at the end of Section~\ref{fralpp}.
\end{proof}

We will  combine Theorem \ref{CQPS+}   with the following construction of   symmetric Frobenius algebras. Let ${{{{\mathcal A}}}}$ be an algebra whose underlying module is free of finite rank.
We say that $\theta \in {{{{\mathcal A}}}}^*$ is {\it trace-like} if the bilinear form $ (-,-)_\theta$ on ${{{{\mathcal A}}}}$ defined by $ (a,b)_\theta=\theta (ab)$ for $a,b\in {{{{\mathcal A}}}}$ is symmetric and non-degenerate. Then the pair $({{{{\mathcal A}}}}, (-,-)_\theta)$ is a symmetric Frobenius algebra.  The associated cyclic
   bilinear form   on   the coalgebra ${{{{{\mathcal A}}}}}^*$ will be  denoted $v_\theta$.

\subsection{Examples}\label{exam}   1. The group algebra ${{{{\mathcal A}}}}= \kk[G]$ of a finite group $G$   has a trace-like  $\theta \in {{{{\mathcal A}}}}^*$ which  carries the neutral element of  $ G$ to 1 and all other elements of $G$ to $0$.
  The   cyclic   form  $v_\theta$ on ${{{{\mathcal A}}}}^*$ is   the  bilinear form of Section~\ref{exaafral}    determined by     the function $\theta\vert_G:  G\to \kk$.
  
  2. In analogy with quartenions we define a unital algebra ${\mathcal A}=\kk 1_{\mathcal A}\oplus \kk i\oplus \kk j\oplus  \kk k$ with unique (associative) multiplication such that  $i^2=j^2=k^2=ijk=-1_{\mathcal A}$. The linear map  $\theta : {{{{\mathcal A}}}}\to \kk$ carrying $1_{\mathcal A}$ to $1_\kk$ and  $i,j,k$ to $0$ is trace-like. This turns ${\mathcal A}$ into a (non-commutative) symmetric Frobenius algebra and yields a cyclic bilinear form $v_\theta$ on the dual coalgebra ${{{{\mathcal A}}}}^*$.
  
 3.  For any $n\geq 1$,   the truncated polynomial algebra ${{{{\mathcal A}}}}= \kk[x]/x^{n+1}$   has a trace-like   $\theta \in M={{{{\mathcal A}}}}^*$ defined by  $\theta(  x^n)=1$ and $\theta(  x^i)=0$ for $i=0,1,..., n-1$.     Let $( u_{i })_{i =0}^n$ be the basis of   $M$ dual to the basis $( x^{i })_{i =0}^n$ of ${{{{\mathcal A}}}}$. The comultiplication ${{\mu}}$ in $M$, the cyclic structure $M\to  {{{{{\mathcal A}}}}}, \alpha \mapsto \overline \alpha$,  and the     form $ v_\theta$ on $M$ are   computed by
   $$ {{\mu}} ( u_{i })=\sum_{0\leq k \leq i}  u_{k}\otimes  u_{i-k}, \quad \overline{u_i}=x^{n-i}, \quad  v_\theta   ( u_{i } \otimes  u_{j })= \delta_{i+j, n}  $$ for all $i,j  \in \{0, 1,...,n\}$.  Given a graded algebra $A$, we write $a_{i }$ for the generator $a_{ u_{i }}  $ of   $A_M$. The       bracket $\bracket{-}{-}_v$ in $A_M$  derived from  a double bracket $\double{-}{-}$  in $A$ is computed from   \eqref{mainf}: for any $a,b\in A$ and $i,j  \in \{0, 1,...,n\}$,
$$\bracket{a_{i }}{b_{j}}_v=  \sum_{0\leq k \leq i+j-n} \double{a}{b}'_{k } \double{a}{b}''_{i+j-n-k}. $$
 It is understood that if $i+j-n<0$, then the right-hand side is equal to zero.

4.  In Example \ref{newexa}.1,  the    trace   of matrices     $\theta=\sum_i \tau_{i,i}  $ is    a  trace-like element of~$M$.      The associated cyclic  form $v_\theta$ on $M$ is computed by $v_\theta  (\tau_{i,j} \otimes \tau_{k,l})= \delta_{il} \delta_{jk}  $ for all $i,j,k,l \in \{1,...,N\}$ where $\delta$ is the Kronecker delta.
Given a graded algebra $A$ with a double bracket $\double{-}{-}$, the       bracket $\bracket{-}{-}_v$ in $A_M$    is computed from   \eqref{mainf}:
 for any $a,b\in A$ and $i,j,k,l$,
$$\bracket{a_{i,j}}{b_{k,l}}_v=   \double{a}{b}'_{k,j} \double{a}{b}''_{i,l}. $$
 This bracket   was first introduced by  Van den Bergh \cite{VdB}.

5.  In generalization of the previous example, consider a symmetric Frobenius algebra $({{{{\mathcal A}}}}, (-,-))$ with cyclic structure ${{{{\mathcal A}}}}^*\to {{{{\mathcal A}}}}, \alpha \mapsto \overline{\alpha}
$  and  with   cyclic bilinear form  $v$ on  $  {{{{{\mathcal A}}}}}^* $  as  in Theorem \ref{CQPS+}.    For any  integer $N\geq 1$, the matrix algebra $\Mat_N({{{{\mathcal A}}}})$ acquires a symmetric Frobenius   pairing
$$(a,b)=\sum_{i,j=1}^N \, (a_{ij}, b_{ji}) \in \kk$$
where $a=(a_{ij})_{i,j } , b=(b_{ij})_{i,j } \in \Mat_N({{{{\mathcal A}}}})$. This determines   a cyclic structure $\alpha \mapsto \overline{\alpha}
$ and a cylic bilinear form, $v_N$, on the coalgebra $M=(\Mat_N({{{{\mathcal A}}}}))^*$.  We   identify elements of $M$ with matrices $
\alpha=(\alpha_{ij})_{i,j}  $ where $i, j\in \{1,..., N\}$, $\alpha_{ij} \in {{{{\mathcal A}}}}^*$ for all $i,j$, so that
$\alpha (a)=\sum_{i,j} \alpha_{ij} (a_{ij})$ for any   $a=(a_{ij})_{i,j}   \in \Mat_N({{{{\mathcal A}}}})$. It is easy to see that $\overline \alpha=
(\overline{
\alpha_{ji}})_{i,j} $ and $v_N(\alpha \otimes \beta)= \sum_{i,j} v(\alpha_{ij} \otimes \beta_{ji})$ for  any $\alpha, \beta\in M$.

 \subsection{Remark}  Each 2-dimensional Topological Quantum Field Theory (TQFT) gives rise to a  unital commutative Frobenius algebra, see   \cite{Ko}.   Combining with Theorem~\ref{CQPS+}, we conclude that every 2-dimensional TQFT gives rise to a cyclic bilinear form on a coalgebra.

 \section{A group action on $A_M$}\label{mainactionsii}

 We show that coalgebra automorphisms of $M$  act  on    $A_M$ by algebra automorphisms and  study the behavior of the bracket \eqref{mainf} under this action. For counital $M$, we exhibit a group  of automorphisms of $A_M$ preserving the bracket.

\subsection{The group $  \Aut(M)$}\label{thegrnsAN+++}      A {\it coalgebra automorphism}  of  a coalgebra $M=(M,{{\mu}} )$  is an invertible linear map  $\omega  :M\to M$ such that ${{\mu}}     \omega  =(  \omega   \otimes   \omega  ){{\mu}}$.  Let $ {\Omega}= \Aut(M)$  be the group of all coalgebra automorphisms of $M$ with multiplication $\omega  \omega'  = \omega  \circ \omega'   $ for   $\omega  , \omega'   \in {\Omega}$.

 \begin{lemma}\label{actonbilformn}
 The group ${\Omega}$ acts  on $(M\otimes M)^* $ on the right by   $v\mapsto v^\omega    =v (\omega   \otimes \omega  ) $ for   $\omega  \in {\Omega}$ and  $v\in (M\otimes M)^*$. This action preserves the set of cyclic bilinear forms.
\end{lemma}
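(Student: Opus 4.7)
The plan is to verify the two parts of the lemma separately: first that the formula $v \mapsto v^\omega$ defines a right action of $\Omega$ on $(M\otimes M)^*$, and second that cyclicity is preserved.

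For the action axioms, I would simply compute: $v^{\id_M} = v(\id_M \otimes \id_M) = v$, and for $\omega, \omega' \in \Omega$,
\[
(v^\omega)^{\omega'} = v^\omega (\omega'\otimes \omega') = v(\omega\otimes \omega)(\omega'\otimes \omega') = v((\omega\circ \omega')\otimes (\omega\circ \omega')) = v^{\omega\omega'},
\]
using that $(\omega\otimes \omega)(\omega'\otimes \omega') = (\omega\circ \omega')\otimes (\omega\circ \omega')$ and the convention $\omega\omega' = \omega\circ \omega'$. This is routine bookkeeping.

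The core of the proof is a direct computation showing that for any $\omega \in \Omega$ and $v \in (M\otimes M)^*$,
\[
\widehat{v^\omega} = (\omega^{-1}\otimes \omega^{-1})\,\widehat{v}\,(\omega\otimes \omega).
\]
To derive this, I would unfold the definition of $\widehat{v^\omega}$: for $\alpha, \beta \in M$,
\[
\widehat{v^\omega}(\alpha\otimes \beta) = v^\omega(\alpha\otimes \beta^2)\,\beta^1\otimes \beta^3 = v(\omega(\alpha)\otimes \omega(\beta^2))\,\beta^1\otimes \beta^3,
\]
and then use the coalgebra automorphism property ${{\mu}}\omega = (\omega\otimes \omega){{\mu}}$ (and its iterate ${{\mu}}^3\omega = \omega^{\otimes 3}{{\mu}}^3$) to rewrite $\beta^1\otimes \beta^2\otimes \beta^3$ as $(\omega^{-1})^{\otimes 3}(\omega(\beta)^1\otimes \omega(\beta)^2\otimes \omega(\beta)^3)$. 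Comparing with $(\omega^{-1}\otimes \omega^{-1})\widehat{v}(\omega\otimes \omega)(\alpha\otimes \beta)$ expanded using the Sweedler notation for $\omega(\beta)$ gives the desired identity.

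Once this intertwining formula is in hand, preservation of cyclicity is immediate. Since $p_{21}$ commutes with $\omega\otimes \omega$ (and with $\omega^{-1}\otimes \omega^{-1}$) for trivial reasons, if $\widehat{v}\,p_{21} = p_{21}\,\widehat{v}$ then
\[
\widehat{v^\omega}\,p_{21} = (\omega^{-1}\otimes \omega^{-1})\,\widehat{v}\,(\omega\otimes \omega)\,p_{21} = (\omega^{-1}\otimes \omega^{-1})\,\widehat{v}\,p_{21}\,(\omega\otimes \omega) = p_{21}\,(\omega^{-1}\otimes \omega^{-1})\,\widehat{v}\,(\omega\otimes \omega) = p_{21}\,\widehat{v^\omega},
\]
so $v^\omega$ is cyclic. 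The only mildly delicate step is the intertwining formula for $\widehat{v^\omega}$, where one must handle the Sweedler notation carefully, but the coalgebra automorphism hypothesis makes this a short calculation rather than a real obstacle.
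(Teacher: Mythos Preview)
Your proposal is correct and follows essentially the same approach as the paper: the key intertwining identity $\widehat{v^\omega} = (\omega\otimes\omega)^{-1}\,\widehat{v}\,(\omega\otimes\omega)$ is exactly what the paper states, and the conclusion via commutation of $p_{21}$ with $\omega\otimes\omega$ is identical. You simply give more detail on the action axioms and the derivation of the intertwining formula, both of which the paper leaves as ``obvious'' or ``easy to check''.
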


\begin{proof} The first claim is obvious. It is easy to check  that for any $v\in (M\otimes M)^*$,
$$\widehat{v^\omega  }= (\omega  \otimes \omega  )^{-1} \widehat v (\omega   \otimes \omega  )\in \End (M\otimes M).$$
If $v$ is cyclic, then    $p_{21}$ commutes with both $\widehat v$ and $\omega  \otimes \omega  $  and hence  commutes with~$\widehat{v^\omega  }$. Hence,   $ v^\omega $ is cyclic.
\end{proof}

      For any graded algebra $A$, there is a natural   left  action of   ${\Omega}=  \Aut(M)$      on   ${\widetilde A}_{M}$. Any    $\omega  \in {\Omega}$ acts on the generators by
$ \omega    a_{\alpha}=
a_ {\omega  (\alpha)}$. This extends uniquely to a graded algebra automorphism of ${\widetilde A}_{M}$. The compatibility   with the bilinearity relations is obvious. The compatibility   with the multiplicativity relations:
$$
\omega   (ab)_{\alpha}   = (ab)_{\omega   (\alpha)}   =  a_{(\omega  (\alpha))^1 } b_{(\omega  (\alpha))^2}    =
 a_{\omega  (\alpha^1)} b_{\omega  (\alpha^2)}   =
  \omega   a_{ \alpha^1 } \cdot  \omega   b_{ \alpha^2 }
  =  \omega   (a_{ \alpha^1 }  b_{ \alpha^2 })
$$ where the third equality holds because ${{\mu}}     \omega  =(  \omega   \otimes   \omega  ){{\mu}}$. The   action of ${\Omega}$ on ${\widetilde A}_{M}$    induces    an action  of ${\Omega}$   on $A_M=\Com({\widetilde A}_{M})$ by graded algebra automorphisms.


  \begin{lemma}\label{fdbtopb+action}
For any cyclic bilinear form $v$ on $M$, any $\omega  \in {\Omega}  $
and   $x,y\in   A_M$, we have $\bracket{\omega  x }{\omega  y}_v=  \omega
\bracket{x} {y}_{v^\omega  }$. Therefore,   the bracket $\bracket{- }{-}_v$ in $A_M$ is preserved under the action of the isotropy group $\{\omega  \in \Omega\, \vert \, v^\omega  =v\} $ of $v$.
\end{lemma}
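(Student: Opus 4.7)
The plan is to verify the identity on the multiplicative generators $\{a_\alpha : a \in A, \alpha \in M\}$ of $A_M$ and then extend to all of $A_M$ by a biderivation argument.

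First I would compute both sides on a pair of generators $x = a_\alpha$, $y = b_\beta$. Applying the defining formula \eqref{mainf}, the left-hand side reads
\[\bracket{\omega(a_\alpha)}{\omega(b_\beta)}_v = \bracket{a_{\omega(\alpha)}}{b_{\omega(\beta)}}_v = v\bigl(\omega(\alpha) \otimes \omega(\beta)^2\bigr)\, \double{a}{b}'_{\omega(\beta)^1}\, \double{a}{b}''_{\omega(\beta)^3}.\]
Because $\omega$ is a coalgebra automorphism, ${{\mu}} \omega = (\omega \otimes \omega) {{\mu}}$; iterating with coassociativity gives ${{\mu}}^3 \omega = \omega^{\otimes 3} {{\mu}}^3$, whence $\omega(\beta)^1 \otimes \omega(\beta)^2 \otimes \omega(\beta)^3 = \omega(\beta^1) \otimes \omega(\beta^2) \otimes \omega(\beta^3)$. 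Substituting, invoking the definitions $v^\omega = v(\omega \otimes \omega)$ and $\omega(c_\gamma) = c_{\omega(\gamma)}$, and using that $\omega$ is a graded algebra automorphism, the expression collapses to
\[v^\omega(\alpha \otimes \beta^2)\, \omega\bigl(\double{a}{b}'_{\beta^1}\bigr)\, \omega\bigl(\double{a}{b}''_{\beta^3}\bigr) = \omega\bigl(v^\omega(\alpha \otimes \beta^2)\, \double{a}{b}'_{\beta^1}\, \double{a}{b}''_{\beta^3}\bigr) = \omega \bracket{a_\alpha}{b_\beta}_{v^\omega},\]
which matches the right-hand side on generators.

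Next I would upgrade this from generators to arbitrary $x, y \in A_M$. Both bilinear maps $(x,y) \mapsto \bracket{\omega x}{\omega y}_v$ and $(x,y) \mapsto \omega \bracket{x}{y}_{v^\omega}$ are $n$-graded biderivations on $A_M$: the first since $\omega$ is a graded algebra automorphism and $\bracket{-}{-}_v$ satisfies the Leibniz rules \eqref{poisson1}--\eqref{poisson2} by Lemma~\ref{bra}; the second because post-composing with an algebra automorphism sends biderivations to biderivations. Since $A_M$ is generated as an algebra by the $\{a_\alpha\}$, two $n$-graded biderivations agreeing on these generators must agree everywhere (routine induction via the Leibniz rules). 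The final assertion of the lemma---that $\bracket{-}{-}_v$ is preserved by the isotropy subgroup of $v$---is then the specialization to $\omega$ with $v^\omega = v$.

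The only step requiring genuine care is the computation on generators, specifically the bookkeeping that routes $\omega$ through the iterated comultiplication of $\beta$ via ${{\mu}}^3 \omega = \omega^{\otimes 3} {{\mu}}^3$ and converts $v$ into $v^\omega$. Everything else is formal, so I do not anticipate any serious obstacle.
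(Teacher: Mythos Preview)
Your approach matches the paper's exactly: check the identity on generators via the computation with ${{\mu}}^3\omega=\omega^{\otimes 3}{{\mu}}^3$, then extend. The generator computation is correct and line-for-line the same as the paper's.

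One small inaccuracy in your extension step: neither $(x,y)\mapsto\bracket{\omega x}{\omega y}_v$ nor $(x,y)\mapsto\omega\bracket{x}{y}_{v^\omega}$ is an $n$-graded biderivation in the standard sense. Expanding $\bracket{\omega x}{\omega(yz)}_v=\bracket{\omega x}{\omega(y)\omega(z)}_v$ via Leibniz produces the factors $\omega(y)$ and $\omega(z)$, not $y$ and $z$; likewise $\omega\bracket{x}{yz}_{v^\omega}=\omega\bracket{x}{y}_{v^\omega}\,\omega(z)\pm\omega(y)\,\omega\bracket{x}{z}_{v^\omega}$. So ``post-composing with an algebra automorphism sends biderivations to biderivations'' is false as stated. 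What \emph{is} true is that both maps satisfy the \emph{same} $\omega$-twisted Leibniz rule, and that is still enough for the inductive extension from generators to all of $A_M$. The paper simply asserts this reduction is ``easy to see'' without spelling it out; your instinct to justify it is good, but the justification needs the twisted form.
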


\begin{proof}   It is easy to see that if
the identity    $\bracket{\omega  x }{\omega  y}_v=  \omega
\bracket{x} {y}_{v^\omega  }$
holds for   generators   of $ A_M$,
then it holds for any $x,y \in   A_M$.   Given  $a,b\in A$ and $\alpha, \beta \in M$,
\begin{eqnarray*}
\bracket{ \omega   a_\alpha}{\omega   b_{\beta} }_v
&=& \bracket{a_{\omega  (\alpha)} }{b_{\omega  (\beta)}}_v\\
&= &  v(\omega  (\alpha) \otimes  \omega  (\beta)^2)  \double{a}{b}'_{\omega  (\beta)^1}  \double{a}{b}''_{\omega  (\beta)^3} \\
&=& v(\omega  (\alpha) \otimes  \omega  (\beta ^2))  \double{a}{b}'_{\omega  (\beta^1)}  \double{a}{b}''_{\omega  (\beta^3)}\\
&=& v^\omega  ( \alpha  \otimes   \beta ^2)  \omega  (\double{a}{b}'_{ \beta^1 })  \omega  (\double{a}{b}''_{ \beta^3 })\\
&=& \omega  ( v^\omega  ( \alpha  \otimes   \beta ^2)   \double{a}{b}'_{ \beta^1 } \double{a}{b}''_{ \beta^3 })=    \omega   \bracket{   a_\alpha}{  b_{\beta} }_{v^\omega  }.
  \end{eqnarray*}
  \end{proof}

 \subsection{The counital case}  For a counital coalgebra  $M$, one   defines  \emph{inner    automorphisms}  as follows.
Note  that  the counit $\varepsilon=\varepsilon_M\in M^*$   is a two-sided unit of  the  algebra   $M^*$ dual to $M$. Since $M^*$ has a unit, we can consider  the group   $U=U(M^*)$  of invertible elements of $M^*$.  Each $u\in U$ acts on $M$ by the  inner    automorphism       $\alpha\mapsto  {}^u\alpha= u^{-1} (\alpha^1) \alpha^2 u(\alpha^3)$.
  This   defines   a  left  action of   $U$  on $M$ by coalgebra automorphisms, that is  a   homomorphism $U \to \Aut(M)$.
 Composing   with the actions  of $\Aut(M)$ on $\widetilde A_M$, $A_M$   we obtain    actions   of $U $ on $\widetilde A_M$, $A_M$.

 Note for the record that   the action of any $u\in U$ on $M$ is dual to the    conjugation by $u$ in $M^*$.  Indeed, for any  $a\in M^*$ and $\alpha \in M$,
 \begin{equation}\label{duac}  a( {}^u\alpha )=a(u^{-1} (\alpha^1) \alpha^2 u(\alpha^3))= u^{-1} (\alpha^1) a(\alpha^2)  u(\alpha^3)= (u^{-1} au)(\alpha).\end{equation}

 \begin{theor}\label{justxxxz} Let  $v\in (M\otimes M)^*$ be a cyclic bilinear form on a counital coalgebra~$M$. Then

 (i) $v$ is symmetric;

 (ii)  the action of  $U=U(M^*)$   on $A_M$ preserves the bracket $\bracket{- }{-}_v$;

 (iii) the fixed point algebra \begin{equation*} A_M^{U}=\{ x\in A_M\, \vert\, ux=x \,\, {\rm {for \,\,  all}} \,\, u\in U\} \end{equation*}
 is closed under the bracket $\bracket{- }{-}_v$.
\end{theor}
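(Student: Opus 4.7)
The plan is to prove (i), (ii), (iii) in order, with (i) feeding into (ii) and (ii) into (iii).

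For (i), I will apply $\varepsilon\otimes\varepsilon$ to both sides of the cyclicity identity \eqref{forcyc}. Two applications of the counit axiom together with coassociativity show $(\varepsilon\otimes \id_M \otimes\varepsilon)\mu^3 = \id_M$; combined with the bilinearity of $v$ in its second argument, the left-hand side of \eqref{forcyc} then collapses to $v(\alpha\otimes\beta)$ while the right-hand side collapses to $v(\beta\otimes\alpha)$. This gives $v(\alpha\otimes\beta) = v(\beta\otimes\alpha)$.

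For (ii), by Lemma~\ref{fdbtopb+action} it suffices to verify $v^{\omega_u} = v$ for every $u\in U$, where $\omega_u(\gamma) = u^{-1}(\gamma^1)\gamma^2 u(\gamma^3)$ is the inner automorphism of $M$ attached to $u$. My approach is to evaluate the cyclicity identity \eqref{forcyc} by applying $u^{-1}$ to the first tensor slot and $u$ to the second. On the left this gives $v(\alpha\otimes\omega_u(\beta))$ by bilinearity of $v$ in its second argument. On the right, the tensor factors $\alpha^3$ and $\alpha^1$ are evaluated on $u^{-1}$ and $u$ respectively, so the positions of $u$ and $u^{-1}$ swap and we obtain $v(\beta\otimes\omega_{u^{-1}}(\alpha))$. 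Using the symmetry from (i) to rewrite the latter as $v(\omega_{u^{-1}}(\alpha)\otimes\beta)$ and then substituting $\alpha\mapsto\omega_u(\alpha)$, the relation $\omega_{u^{-1}}\circ\omega_u = \id_M$ yields $v(\omega_u(\alpha)\otimes\omega_u(\beta)) = v(\alpha\otimes\beta)$, so $v^{\omega_u}=v$.

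Part (iii) is then immediate from (ii): for $x,y\in A_M^U$ and any $u\in U$, the $U$-equivariance of the bracket (from (ii) via Lemma~\ref{fdbtopb+action}) gives $u\bracket{x}{y}_v = \bracket{ux}{uy}_v = \bracket{x}{y}_v$, so $\bracket{x}{y}_v\in A_M^U$. There is no real obstacle beyond bookkeeping; the only subtlety is that transferring $\omega_u$ from one slot of $v$ to the other via cyclicity converts $u$ into $u^{-1}$, and the symmetry established in (i) is precisely what absorbs this inversion to recover invariance under the diagonal action.
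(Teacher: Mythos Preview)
Your proof is correct. Parts (i) and (iii) match the paper's argument exactly. For (ii) you take a slightly more direct route than the paper: the paper introduces the cyclic structure $\overline{\,\cdot\,}:M\to M^*$, first establishes the intermediate identity $\overline{{}^u\alpha}=u\,\overline{\alpha}\,u^{-1}$ by evaluating $u\otimes u^{-1}$ on \eqref{cycl1}, and then combines this with \eqref{duac} to conclude $v^u=v$. You bypass this intermediate step entirely by evaluating $u^{-1}\otimes u$ directly on \eqref{forcyc}, obtaining $v(\alpha\otimes\omega_u(\beta))=v(\beta\otimes\omega_{u^{-1}}(\alpha))$ in one stroke, and then using the symmetry from (i) plus the substitution $\alpha\mapsto\omega_u(\alpha)$ to finish. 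Both arguments rest on the same underlying computation (pairing the cyclicity identity against $u$ and $u^{-1}$); yours is shorter and avoids the auxiliary $\overline{\,\cdot\,}$ notation, while the paper's version isolates the conjugation formula $\overline{{}^u\alpha}=u\,\overline{\alpha}\,u^{-1}$, which it reuses later (e.g.\ in the proof of Theorem~\ref{justxxxzLie}).
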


\begin{proof}  Let  $\alpha, \beta \in M$.  Applying $\varepsilon \otimes \varepsilon: M\otimes M\to \kk$ to both sides of   \eqref{forcyc}, we obtain that  $v (\alpha \otimes \beta)=v (\beta \otimes \alpha)$. This equality may be rewritten as $\overline \alpha (\beta)= \overline \beta (\alpha)$
where the overbar denotes the cyclic structure $M\to M^*$ determined by $v$.

 By Lemma~\ref{fdbtopb+action}, in order to prove (ii), it is enough to show that $v^u=v$ for all $u\in U$.  We  first  verify that $\overline {{}^u \alpha}= u \overline \alpha u^{-1}$ for any $\alpha \in M$.
 We have $$\overline {{}^u \alpha}= \overline {u^{-1} (\alpha^1) \alpha^2 u(\alpha^3)}=u^{-1} (\alpha^1)\, \overline {\alpha^2} \,u(\alpha^3). $$ Evaluating on any $\beta \in M$, we obtain
 $$\overline {{}^u \alpha} (\beta)= u^{-1} (\alpha^1)\, \overline {\alpha^2} (\beta) \,u(\alpha^3)$$
 $$=
 u^{-1} (\alpha^1)\, \overline {\beta} (\alpha^2) \,u(\alpha^3)=
 u(\beta^1)\, \overline {\alpha} (\beta^2) \,u^{-1} (\beta^3)=(u \overline {\alpha}u^{-1}) (\beta)  $$
 where the penultimate equality is obtained by evaluating  $u\otimes u^{-1}$ on both sides of~\eqref{cycl1}. We conclude that $\overline {{}^u \alpha}=u \overline {\alpha}u^{-1}$.
 Next, identifying $u\in U$ with its image in $\Aut(M)$, we obtain for any $\alpha, \beta \in M$,
 $$v^u(\alpha \otimes \beta)=v ({}^u \alpha \otimes {}^u \beta)= \overline {{}^u \alpha} ( {}^u \beta )=(u \overline \alpha u^{-1})({}^u\beta )$$
 $$\stackrel {\eqref{duac}}=(u^{-1} u \overline \alpha u^{-1} u) (\beta )=\overline \alpha(\beta )= v(\alpha \otimes \beta).$$
 We conclude that $v^u=v$. This proves (ii). Clearly,  (ii) implies (iii).
\end{proof}


 \section{A Lie algebra action on $A_M$}\label{mainactionsiiLiee}

  We show that coderivations of   $M$  act  on    $A_M$ by derivations and study the behavior of the bracket \eqref{mainf} under this action. For    counital $M$, we exhibit a Lie algebra of derivations of $A_M$ preserving the bracket.

\subsection{Derivations}\label{algebras++}

A (degree zero) \emph{derivation}  in a graded  algebra $A$     is a linear map $\delta :A \to A$ such that $\delta(A^p)\subset
A^{p}$   for all $p\in \ZZ$ and \begin{equation}\label{deri} \delta(ab)=\delta(a)b+ a\delta(b)\end{equation}
for any    $a,b \in A$.
The derivations  of $A$   form a    module, $\Der(A)$.
We provide $\Der(A)$ with the   Lie bracket  by
$[\delta ,d]=  \delta d-   d \delta $
for any  $\delta , d\in \Der(A)$. Any derivation of   $A$  carries $[A,A]$ into itself and   induces     a derivation  of the algebra $ \Com(A) $.

 An
\emph{action} of a   (non-graded) Lie algebra  $\g$  on $A$ is a   Lie algebra homomorphism      $\g\to
\Der(A)$. Such an action     induces  an     action  of $\g$  on $  \Com(A)$ in the obvious way.

 A  bracket $\bracket{-}{-}$ in $A$ is   \emph{invariant} under a derivation $\delta:A\to A$ if $\delta (\bracket{a}{b})=\bracket{\delta (a)}{b}+\bracket{a}{\delta(b)}$ for any   $a,b\in A$.
 A  bracket $\bracket{-}{-}$ in $A$ is   \emph{invariant} under an action   of a Lie algebra  $\g$  on $A$ if it is invariant under the action of all elements of~$\g$. Note that then $\bracket{A^\g }{A^\g}\subset A^\g$ where
  \begin{equation}\label{fifi}  A^\g=  \{ a\in A \, \vert\, w (a)=0 \,\, {\rm {for \,\,  all}} \,\, w\in \g \}.\end{equation}

\subsection{Coderivations}\label{algebras++ss} A  \emph{coderivation} in a coalgebra  $M$  is a  linear map $\delta:M\to M$ such that \begin{equation}\label{coco} {{\mu}} \delta=(\delta \otimes \id_M+\id_M \otimes \delta){{\mu}}.\end{equation} This  formula may be rewritten as the identity $$\delta (\alpha)^1     \otimes {\delta (\alpha)^2}  =  \delta (\alpha^1   ) \otimes {\alpha^2}  +  {\alpha^1} \otimes {\delta (\alpha^2   )} $$ for any $\alpha\in M$.  The coderivations  of $M$   form a    Lie algebra, $\coder(M)$,  with the   Lie bracket
$[\delta ,d]=  \delta d-   d \delta   $
for any  $\delta , d\in \coder(M)$.

The  Lie algebra $\coder (M) $ contains a   Lie  subalgebra of inner coderivations.   Namely, consider the algebra $M^* $ dual to $M$, and let $\underline M^*$ be the  module $M^*$   equipped with the  Lie bracket $[\varphi,\psi ]=\varphi\psi  -\psi  \varphi$ for any $\varphi,\psi  \in M^*$. Every   $\varphi\in   M^*$ determines an {\it inner  coderivation} $\delta_\varphi$ of $M$   by $\delta_\varphi(\alpha)=    \varphi (\alpha^2) \alpha^1 -\varphi (\alpha^1) \alpha^2$ for  $\alpha \in M$. The map $\underline M^* \to  \coder (M),  \varphi\mapsto \delta_\varphi$ is a   Lie algebra homomorphism.

\begin{lemma}\label{acti} For a graded algebra $A$ and   a coalgebra $M$, there
  is a unique      action of  the   Lie algebra $\coder (M) $    on
${\widetilde A}_{M}$ such that $ \delta (a_{\alpha})=   a_{\delta(\alpha)  }$ for any    $\delta \in \coder (M)$, $a\in A$, $\alpha \in {M}$.
\end{lemma}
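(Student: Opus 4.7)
The plan is to fix $\delta\in\coder(M)$ and construct, for each such $\delta$, an associated derivation $\widetilde\delta\in\Der(\widetilde A_M)$, then verify that $\delta\mapsto\widetilde\delta$ is a Lie algebra homomorphism. Uniqueness is immediate: since $\widetilde A_M$ is generated as an algebra by the symbols $\{a_\alpha\}_{a\in A,\alpha\in M}$, and since a derivation is determined by its values on a generating set via the Leibniz rule \eqref{deri}, the prescribed formula $\widetilde\delta(a_\alpha)=a_{\delta(\alpha)}$ determines $\widetilde\delta$ at most uniquely.

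For existence, I would first define $\widetilde\delta$ on the free graded algebra $F$ generated by the symbols $\{a_\alpha\}_{a\in A,\alpha\in M}$ (with $|a_\alpha|=|a|$) by decreeing $\widetilde\delta(a_\alpha)=a_{\delta(\alpha)}$ on generators and extending to $F$ by the Leibniz rule; this uniquely produces a degree-zero derivation of $F$. The work is then to show that $\widetilde\delta$ descends to the quotient $\widetilde A_M$, i.e.\ annihilates the bilinearity and multiplicativity relations. The bilinearity relations are handled immediately from the linearity of $\delta$ (for instance, $\widetilde\delta(a_{\alpha+\beta})=a_{\delta(\alpha+\beta)}=a_{\delta(\alpha)}+a_{\delta(\beta)}=\widetilde\delta(a_\alpha)+\widetilde\delta(a_\beta)$). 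The multiplicativity relation is the crucial point: using the Leibniz rule in $F$ and the definition on generators,
\begin{equation*}
\widetilde\delta(a_{\alpha^1}b_{\alpha^2})=a_{\delta(\alpha^1)}b_{\alpha^2}+a_{\alpha^1}b_{\delta(\alpha^2)},
\end{equation*}
while the coderivation identity \eqref{coco} gives $\delta(\alpha)^1\otimes\delta(\alpha)^2=\delta(\alpha^1)\otimes\alpha^2+\alpha^1\otimes\delta(\alpha^2)$, so
\begin{equation*}
\widetilde\delta((ab)_\alpha)=(ab)_{\delta(\alpha)}=a_{\delta(\alpha)^1}b_{\delta(\alpha)^2}=a_{\delta(\alpha^1)}b_{\alpha^2}+a_{\alpha^1}b_{\delta(\alpha^2)},
\end{equation*}
matching the previous expression. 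Hence $\widetilde\delta$ induces a well-defined derivation of $\widetilde A_M$.

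It remains to check that $\delta\mapsto\widetilde\delta$ is a Lie algebra homomorphism $\coder(M)\to\Der(\widetilde A_M)$. Linearity in $\delta$ is clear from the formula on generators. For the bracket, given $\delta,d\in\coder(M)$, both $\widetilde{[\delta,d]}$ and $[\widetilde\delta,\widetilde d]=\widetilde\delta\widetilde d-\widetilde d\,\widetilde\delta$ are derivations of $\widetilde A_M$, so by the uniqueness principle of the first paragraph it suffices to check their agreement on generators, which is the direct computation
\begin{equation*}
[\widetilde\delta,\widetilde d](a_\alpha)=\widetilde\delta(a_{d(\alpha)})-\widetilde d(a_{\delta(\alpha)})=a_{\delta(d(\alpha))}-a_{d(\delta(\alpha))}=a_{[\delta,d](\alpha)}=\widetilde{[\delta,d]}(a_\alpha).
\end{equation*}
The only substantive step is the compatibility with the multiplicativity relations, and it hinges exactly on the defining property \eqref{coco} of a coderivation; everything else is formal.
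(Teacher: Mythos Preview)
Your proposal is correct and follows essentially the same approach as the paper: define $\widetilde\delta$ on generators, verify compatibility with the bilinearity and multiplicativity relations (the latter using exactly the coderivation identity \eqref{coco}), and then check the Lie bracket on generators. The only cosmetic difference is that you make explicit the intermediate step of extending to the free algebra before passing to the quotient, whereas the paper phrases this directly as ``compatibility with the defining relations''; the content is identical.
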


\begin{proof} The uniqueness is obvious, and we need only to construct the action in question.   We derive from each $\delta \in \coder (M)$ a derivation $\widetilde \delta$ in $\widetilde A_M $  such that $\widetilde \delta (a_{\alpha})=   a_{\delta(\alpha)  }$ for all  generators $a_\alpha$ of $\widetilde A_M$.
We  check the compatibility with the  defining  relations of $\widetilde A_M$.   The compatibility with the bilinearity  relations is obvious.  The compatibility with the  multiplicativity relations:
\begin{eqnarray*} \widetilde  \delta ((ab)_{\alpha})  & {=}& (ab)_{  \delta(\alpha)  }    = a_{\delta(\alpha)^1}   b_{\delta(\alpha)^2}  \\
&=&  a_{ \delta(\alpha^1   )}    b_{\alpha^2}  +   a_{\alpha^1}  b_{\delta(\alpha^2   )}\\
&=&  \widetilde \delta(a_{\alpha^1}) b_{\alpha^2} +   a_{\alpha^1} \widetilde \delta(b_{\alpha^2})= \widetilde \delta(a_{\alpha^1} b_{\alpha^2}).
 \end{eqnarray*}

The  map $\coder (M) \to \Der (\widetilde A_M), \delta \mapsto \widetilde \delta$  is linear and preserves the Lie bracket: for    $\delta,d \in \coder (M)$,   $a\in A$, $\alpha \in {M}$,
$$[\widetilde \delta, \widetilde d] (a_{\alpha})   {=}  \widetilde \delta \widetilde d (a_{\alpha})-   \widetilde d  \widetilde \delta (a_{\alpha})
=  a_{  \delta d(\alpha)  }-      a_{ d \delta(\alpha)  }
=  a_{ [ \delta,d](\alpha)  } =\widetilde {[ \delta,d]} (a_{  \alpha   })  .$$
Since $\{a_\alpha\}$ generate  the algebra $\widetilde A_M$, we have $[\widetilde \delta, \widetilde d]=\widetilde {[ \delta,d]} $.
\end{proof}

   Composing  the map $\underline M^* \to  \coder (M),  \varphi\mapsto \delta_\varphi$  with the action of $\coder (M)$ on~$\widetilde A_M$,      we obtain an action of the Lie algebra  ${\underline M}^*$ on $\widetilde A_M$. This    induces an     action of   ${\underline M}^*$   on $A_{M}=\Com(\widetilde A_{M})$. We   state an analogue of Theorem~\ref{justxxxz}  in this context.

\begin{theor}\label{justxxxzLie} For any double bracket $\double{-}{-}$ in a graded algebra $A$ and any cyclic bilinear form $v$ on a counital coalgebra~$M$,
   the induced  bracket $ \bracket{- }{-}_v$ in $A_M$ is invariant under the action of   ${\underline M}^*$. As a consequence, the  graded algebra \begin{equation}\label{Afix} A^M_M=(A_M)^{{\underline M}^*}= \{ x\in A_M\, \vert\, \delta_\varphi( x)=0 \,\, {\rm {for \,\,  all}} \,\, \varphi\in {\underline M}^*\}\end{equation}
is   closed under the bracket $\bracket{- }{-}_v$.
\end{theor}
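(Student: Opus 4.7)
The second assertion follows at once from the first by the general principle recorded in Section~\ref{algebras++}: an $n$-graded biderivation invariant under a Lie algebra action restricts to the fixed-point subalgebra. The task therefore reduces to proving that for each $\varphi \in M^*$ the induced derivation $\widetilde{\delta_\varphi}$ of $A_M$ satisfies
\[
\widetilde{\delta_\varphi}\bigl(\bracket{x}{y}_v\bigr) = \bracket{\widetilde{\delta_\varphi}(x)}{y}_v + \bracket{x}{\widetilde{\delta_\varphi}(y)}_v
\]
for all $x, y \in A_M$. Since $\widetilde{\delta_\varphi}$ is a derivation and $\bracket{-}{-}_v$ is a biderivation in each argument, the difference of the two sides of this identity is itself a biderivation in $(x,y)$; as $A_M$ is generated by the symbols $a_\alpha$, it suffices to verify vanishing on generators $x = a_\alpha$, $y = b_\beta$.

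Write $\double{a}{b} = x' \otimes x''$. Formula \eqref{mainf} together with the derivation rule for $\widetilde{\delta_\varphi}$ expresses the left-hand side as $v(\alpha \otimes \beta^2)\bigl[x'_{\delta_\varphi(\beta^1)} x''_{\beta^3} + x'_{\beta^1} x''_{\delta_\varphi(\beta^3)}\bigr]$. For the right-hand side, iterating \eqref{coco} shows that $\mu^3 \circ \delta_\varphi$ equals $(\delta_\varphi \otimes \id \otimes \id + \id \otimes \delta_\varphi \otimes \id + \id \otimes \id \otimes \delta_\varphi) \circ \mu^3$, so that $\bracket{a_\alpha}{b_{\delta_\varphi(\beta)}}_v$ expands via \eqref{mainf} into three summands. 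Two of them, those with $\delta_\varphi$ applied to $\beta^1$ and $\beta^3$, reproduce the left-hand side exactly; the remaining summand, together with $\bracket{a_{\delta_\varphi(\alpha)}}{b_\beta}_v$, reduces the whole identity to the requirement that
\[
\bigl[v(\delta_\varphi(\alpha), \beta^2) + v(\alpha, \delta_\varphi(\beta^2))\bigr]\, x'_{\beta^1} x''_{\beta^3} = 0.
\]
Because $a, b \in A$ are arbitrary, this will in turn follow from the \emph{infinitesimal invariance} of $v$, namely $v(\delta_\varphi(\alpha), \gamma) + v(\alpha, \delta_\varphi(\gamma)) = 0$ for all $\alpha, \gamma \in M$.

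It remains to establish this infinitesimal invariance. The direct route is to expand the left-hand side using $\delta_\varphi(\alpha) = \varphi(\alpha^2)\alpha^1 - \varphi(\alpha^1)\alpha^2$ and the symmetry $v(\alpha, \gamma) = v(\gamma, \alpha)$ supplied by Theorem~\ref{justxxxz}(i) (which needs the counital hypothesis); the four resulting terms then cancel pairwise after applying the cyclicity identity \eqref{forcyc}, contracted first with $\varphi \otimes \varepsilon$ and then with $\varepsilon \otimes \varphi$, and simplifying with the counit axioms. A more conceptual alternative is to extend scalars to $\kk[t]/(t^2)$, observe that $u = \varepsilon + t\varphi$ is a unit of $M^*[t]/(t^2)$ whose action on $M[t]/(t^2)$ is $\alpha \mapsto \alpha + t\delta_\varphi(\alpha)$, apply Theorem~\ref{justxxxz}(ii) to conclude that $v^u = v$ over $\kk[t]/(t^2)$, and extract the coefficient of $t$. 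The principal technical obstacle is the careful bookkeeping of Sweedler factors when distributing $\delta_\varphi$ across $\mu^3(\beta)$; once this is organized so that the two outer contributions cancel with the left-hand side, the proof collapses to the infinitesimal invariance just established.
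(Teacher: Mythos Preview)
Your proof is correct and follows essentially the same route as the paper's: reduce to generators, expand $\mu^3\circ\delta_\varphi$ via the coderivation identity, cancel the two ``outer'' terms against $\widetilde{\delta_\varphi}\bracket{a_\alpha}{b_\beta}_v$, and reduce to the skew condition $v(\delta_\varphi\otimes\id_M)=-v(\id_M\otimes\delta_\varphi)$, which is then checked from cyclicity and symmetry. The paper's only organizational difference is that it first proves invariance for \emph{any} coderivation $\delta$ satisfying $v(\delta\otimes\id_M)=-v(\id_M\otimes\delta)$ and afterwards verifies that inner coderivations have this property, whereas you work with $\delta_\varphi$ throughout; your dual-numbers alternative (extending to $\kk[t]/(t^2)$ and invoking Theorem~\ref{justxxxz}(ii)) does not appear in the paper but is a legitimate and pleasant shortcut.
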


\begin{proof}     We verify first that   the   bracket $ \bracket{- }{-}_v$ is invariant under the action   on $A_M$ of any coderivation  $\delta:M\to M$ such that $v(\delta  \otimes \id_M)=- v(\id_M \otimes \delta )$.   We must prove that $ \delta (\bracket{x}{y}_v)= \bracket{\delta  (x)}{y}_v+\bracket{x}{\delta  (y)}_v$ for all     $x,y \in A_M$. A simple computation using the Leibniz rules shows that it is enough to prove this identity for the generators $x=a_\alpha$, $y=b_\beta$ of $A_M$ where $a,b\in A$, $\alpha, \beta\in M$. We have
$$ \bracket{\delta  (a_\alpha)}{b_\beta}_v+\bracket{a_\alpha}{\delta  (b_\beta)}_v= \bracket{  a_{\delta (\alpha)}}{b_\beta}_v+\bracket{a_\alpha}{ b_{\delta  (\beta)}}_v$$
$$= \double{a}{b}'_{\delta (\alpha)_\beta } \double{a}{b}''_{\beta^{\delta(\alpha)}}+\double{a}{b}'_{ \alpha_{\delta (\beta)}} \double{a}{b}''_{\delta (\beta)^\alpha }.$$
On the other hand,
$$\delta ( \bracket{a_\alpha}{b_\beta})= \delta (\double{a}{b}'_{\alpha_\beta} \double{a}{b}''_{\beta^\alpha})
=\double{a}{b}'_{\delta (\alpha_\beta)} \double{a}{b}''_{\beta^\alpha}+\double{a}{b}'_{ \alpha_\beta} \double{a}{b}''_{\delta (\beta^\alpha)}.$$
We only need to prove that for all $\alpha, \beta\in M$, $$  {\delta (\alpha)_\beta } \otimes {\beta^{\delta(\alpha)}}+ { \alpha_{\delta (\beta)}} \otimes {\delta (\beta)^\alpha } ={\delta (\alpha_\beta)} \otimes {\beta^\alpha}+ { \alpha_\beta}\otimes {\delta (\beta^\alpha)} $$
or, equivalently,
\begin{equation}\label{idh} \widehat v (\delta \otimes \id_M+ \id_M \otimes \delta)=(\delta \otimes \id_M+ \id_M \otimes \delta) \widehat v.\end{equation}
Since $\delta$ is a coderivation, we easily compute that for any $\beta \in M$, $${{\mu}}^2 (\delta(\beta))=  \delta(\beta^1) \otimes \beta^2 \otimes \beta^3+
\beta^1 \otimes \delta(\beta^2) \otimes \beta^3+ \beta^1 \otimes  \beta^2  \otimes   \delta(\beta^3).$$
Hence, evaluating the left-hand side  of \eqref{idh} on $\alpha \otimes \beta  \in M\otimes M $, we obtain
$$ v(\delta (\alpha) \otimes \beta^2) \, \beta^1  \otimes \beta^3
+v(\alpha \otimes  \beta^2) \, \delta(\beta^1)   \otimes  \beta^3 $$
$$+v(\alpha \otimes  \delta(\beta^2)  ) \, \beta^1 \otimes \beta^3 +v(\alpha \otimes  \beta^2) \, \beta^1 \otimes  \delta(\beta^3)  .$$
  The first and  third terms cancel out by the assumption $v(\delta  \otimes \id_M)=- v(\id_M \otimes \delta )$. The remaining two terms
   give
   \begin{equation*}\label{idh+}v(\alpha \otimes \beta^2)( \delta (\beta^1) \otimes \beta^3 +\beta^1 \otimes \delta(\beta^3))=(\delta \otimes \id_M+ \id_M \otimes \delta) \widehat v (\alpha \otimes \beta).\end{equation*}   This proves \eqref{idh}.

  To accomplish the proof, it remains to show that $v(\delta_\varphi  \otimes \id_M)=- v(\id_M \otimes \delta_\varphi )$ for all $\varphi\in M^*$. We evaluate both sides on any $\alpha \otimes \beta  \in M\otimes M $. Recall the cyclic structure
$M \to M^*, \gamma\mapsto \overline \gamma$ determined by $v$.    Applying    $\id_M \otimes \varepsilon_M$ and $  \varepsilon_M  \otimes  \id_M $  to both sides of \eqref{cycl1} we obtain   \begin{equation}\label{fghj}    {\overline \beta} (\alpha^1) \alpha^2   =   {\overline \alpha} (\beta^2)   \beta^1  \quad {\rm{and}}\quad   {\overline \beta} (\alpha^2) \alpha^1   =   {\overline \alpha} (\beta^1)   \beta^2 .\end{equation}
Using  these formulas and the identity   $  {\overline \gamma} (\beta )={\overline \beta} (\gamma)    $ for all $\gamma\in M$, we obtain
\begin{equation*}\label{strange}  v(\delta_\varphi (\alpha)  \otimes
  \beta)= \overline{\delta_\varphi(\alpha)} (\beta)= \overline{\beta} (\delta_\varphi(\alpha))
  = \varphi(\alpha^2)  \overline{\beta} (\alpha^1)- \varphi(\alpha^1)\overline{\beta} ( \alpha^2)\end{equation*}
\begin{equation*} =  \varphi(\beta^1)  \overline{\alpha} (\beta^2)- \varphi(\beta^2)\overline{\alpha} ( \beta^1)  =
  - \overline{\alpha} (\delta_\varphi(\beta))=- v(\alpha \otimes \delta_\varphi(\beta)) . \qedhere \end{equation*}
\end{proof}

\subsection{Remark} Subtracting the first of the equalities \eqref{fghj} from the second one, we obtain the following useful identity: for all $\alpha, \beta \in M$,
\begin{equation}\label{fghje} \delta_{\overline \beta}(\alpha)= -\delta_{\overline \alpha}(\beta). \end{equation}

\subsection{Compatibility}\label{compa} To relate  the   actions of $\Aut(M)$ and $\coder (M) $ constructed above, we use the language of Lie pairs introduced in \cite{MT2}. A {\it Lie pair} is a pair $(G,\g)$ where $G$ is a group and $\g$ is a   Lie algebra equipped with a (left) action of $G$ by Lie algebra automorphisms $w \to  {}^g  w$ where $w $ runs  over $\g$ and $g$  runs  over $G$. A {\it morphism} of Lie pairs $(G',\g')\to (G,\g)$
is a pair (a group homomorphism $F:G'\to G$, a Lie algebra homomorphism $f:\g'\to \g$) such that $f({}^g  w)= {}^{F(g)}  (f(w))$ for all $g\in G'$, $w\in \g'$. An {\it action} of a Lie pair $(G,\g)$   on a graded algebra $A$ is a pair (a (left) action of $G$ on $A$ by  graded algebra automorphisms,  an action of    $\g$  on $A$) such that  $({}^g  w)x  = g  w  g^{-1} x$ for any $g\in G$,   $w \in \g $,   $x\in A$. Note that then the   algebra $A^{\g} $ defined by \eqref{fifi} is $G$-invariant. Indeed, if $ g\in G$ and $x\in A^{\g}$, then $gx\in A^{\g}$ because for all $ w \in \g$,
$$w  gx =g(g^{-1} w  g)x =g ({}^{g^{-1}}  w)  x =g0=0  .$$
 An action  of   $(G,\g)$  on   $A$ composed with a morphism  of Lie pairs $(G',\g')\to (G,\g)$ yields an action of $(G',\g')$ on $A$. An action  of a Lie pair    on   $A$  induces an action  of this Lie pair on $\Com(A)$ in the obvious way.

Any coalgebra $M$ gives rise to a Lie pair $(G=\Aut(M), \g=\coder   (M) )$ where $G$ acts on $\g$ by   ${}^g  w= g  w   g^{-1} :M\to M$ for   $g\in G$ and   $w \in \g $.  The above-defined actions of $G$ and $\g $ on $\widetilde A_M$
determine  an action  of the Lie pair $(G,\g)$ because $({}^g  w)x  = g  w   g^{-1} x$ for any $g\in G$,   $w \in \g $,   $x\in A$.
Since   ${}^g  w$ and $ g  w   g^{-1} $ act as derivations in $\widetilde A_M$,  it is enough to check this    for  the generators $x=a_\alpha\in \widetilde A_M$ where $a\in A$, $\alpha\in M$. We have
$${}^g w( a_\alpha)=a_{({}^g w(  \alpha))}=a_{g  w   g^{-1} (\alpha)}=  g a_{   w g^{-1}    (\alpha)}=  g w (a_{    g^{-1}  (\alpha)}) = g w g^{-1} (a_\alpha). $$

A counital coalgebra $M$ gives rise to a Lie pair $( U ,  \underline {M}^* )$ where $U=U(M^*)$ acts on $\underline {M}^*$ by      ${}^u  \varphi  =   u\varphi u^{-1}   $ for   $u\in U$, $\varphi\in M^*$.  It is easy to check that the above-defined homomorphisms $U\to \Aut(M)$ and
$\underline {M}^*\to \coder   (M)$ determine a morphism of Lie pairs. Therefore the above-defined actions of $U$ and $\underline {M}^* $ on $\widetilde A_M$
determine an action  of the Lie pair $( U,  \underline {M}^* )$ on $\widetilde A_M$. Considering   the induced action on $A_M=\Com(\widetilde A_M)$, we can conclude  that, under the assumptions of Theorem~\ref{justxxxzLie}, the  graded algebra $ A^M_M \subset A_M$ is $U$-invariant.

\section{The bracket $\langle - , - \rangle$ and the traces}\label{trace}

    A double bracket in   $A$ induces a bracket  $\langle - , - \rangle$ in $\check A=A/[A,A]$, and we relate it    to the bracket  $\bracket{-}{-}_v$ in $A_M$.

\subsection{The bracket $\langle - , - \rangle$}\label{stdb}

 An $n$-graded double  bracket $\double{-}{-}$ in a graded algebra $A$ induces  a bilinear form $ A\times A\to A, (a,b)\mapsto  {\double{a }{b}}'  {\double{a }{b}}'' $.  The latter descends to an $n$-graded antisymmetric    bracket $ \langle -,-  \rangle: \check A \times \check A \to \check A$   in the graded module  $\check A=A/[A,A]$, see \cite{VdB}.   If   $\double{-}{-}$ is Gerstenhaber, then  $ \langle -,-  \rangle$ is   an $n$-graded Lie bracket, i.e.,  it is $n$-graded antisymmetric and satisfies the $n$-graded  Jacobi identity.

\subsection{Traces}\label{subs-trace}  Every  element $\theta$ of a coalgebra $ M$  determines a degree-preserving linear map $ \tr_\theta:   A\to A_M$   by $\tr_\theta (a)=a_\theta$ for   $a\in A$. The subalgebra of $A_M$ generated by the set $\tr_\theta(  A)=\{a_\theta\}_{a\in A}$ is   denoted   $A(\theta)$.

We call $\theta\in M$ \emph{symmetric} if
${{\mu}}(\theta)\in M\otimes M$ is invariant under the permutation of the tensor factors, i.e., if $p_{21} {{\mu}} (\theta)= {{\mu}} (\theta)$.

 \begin{lemma}\label{trat--} For a  symmetric $\theta\in M$,  we have $[A,A]\subset \Ker \tr_\theta$ and $ A(\theta) \subset A_M^M $ where    $A^M_M$ is the subalgebra of $A_M$ defined by \eqref{Afix}.
 \end{lemma}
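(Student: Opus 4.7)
The plan is to verify each inclusion directly from the defining relations of $A_M$, exploiting the symmetry of $\theta$ in two slightly different ways.

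For the first inclusion $[A,A]\subset \Ker\tr_\theta$, I would take   homogeneous $a,b\in A$ and compute $(ab)_\theta$ and $(ba)_\theta$ via the multiplicativity relation. Writing $\mu(\theta)=\theta^1\otimes \theta^2$, this gives $(ab)_\theta=a_{\theta^1}b_{\theta^2}$ and $(ba)_\theta=b_{\theta^1}a_{\theta^2}$. The symmetry of $\theta$ means $\theta^1\otimes \theta^2=\theta^2\otimes \theta^1$ inside the summation, so I can rewrite $(ba)_\theta=b_{\theta^2}a_{\theta^1}$. Since $A_M$ is graded commutative, this equals $(-1)^{|a||b|}a_{\theta^1}b_{\theta^2}=(-1)^{|a||b|}(ab)_\theta$, and so $\tr_\theta(ab-(-1)^{|a||b|}ba)=0$.

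For the second inclusion $A(\theta)\subset A_M^M$, the key observation is that $\delta_\varphi(\theta)=0$ for every $\varphi\in M^*$. Indeed, $\delta_\varphi(\theta)=\varphi(\theta^2)\theta^1-\varphi(\theta^1)\theta^2$, and the two summands coincide once we use $p_{21}\mu(\theta)=\mu(\theta)$. By the construction of the action of $\coder(M)$ on $\widetilde A_M$ in Lemma~\ref{acti}, this yields $\delta_\varphi(a_\theta)=a_{\delta_\varphi(\theta)}=0$ for every $a\in A$. Hence each generator $\tr_\theta(a)=a_\theta$ of $A(\theta)$ lies in the fixed algebra $A_M^M$. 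Finally, since $\delta_\varphi$ acts by a derivation on $A_M$, the subset $A_M^M$ is closed under the multiplication of $A_M$, and therefore contains the entire subalgebra $A(\theta)$ generated by $\tr_\theta(A)$.

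No real obstacle is expected; the argument is a direct unfolding of definitions. The only point one must be careful about is that both conclusions rest on the same symmetry condition $p_{21}\mu(\theta)=\mu(\theta)$, but applied in two distinct ways: in the first part it combines with graded commutativity of $A_M$, while in the second part it produces cancellation inside $\delta_\varphi(\theta)$ and is transported to $A_M$ through the functoriality of $a\mapsto a_{(-)}$.
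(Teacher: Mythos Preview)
Your argument is correct and follows essentially the same route as the paper's own proof: both parts are direct computations using the multiplicativity relations, the symmetry $p_{21}\mu(\theta)=\mu(\theta)$, graded commutativity of $A_M$, and the formula $\delta_\varphi(a_\theta)=a_{\delta_\varphi(\theta)}$. The only cosmetic difference is that you spell out the closure of $A_M^M$ under products, which the paper leaves implicit.
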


\begin{proof}  For any homogeneous $a, b\in A$,
\begin{eqnarray*}
\tr_\theta (ab- (-1)^{\vert a \vert \, \vert b\vert} ba) &=&  (ab)_\theta- (-1)^{\vert a \vert \, \vert b\vert} (ba)_\theta \\
& {=}&   a_{\theta^1} b_{\theta^2} - (-1)^{\vert a \vert \, \vert b\vert} b_{\theta^1}a_{\theta^2}
=a_{\theta^1} b_{\theta^2}- a_{\theta^2} b_{\theta^1}=0
\end{eqnarray*}
where the last   equality holds because $p_{21} {{\mu}} (\theta)= {{\mu}} (\theta)$.
Thus, $\tr_\theta([A,A])=0$.

The equality $p_{21} {{\mu}} (\theta)= {{\mu}} (\theta)$   implies that each inner coderivation $\delta$ of $M$ annihilates $\theta$. Therefore $\delta (a_\theta)= a_{\delta (\theta)}=0$ for all $a\in A$. Hence $ \tr_\theta(  A) \subset A_M^M $.
\end{proof}

  We   say that   $\theta \in M$  is \emph{adjoint} to a bilinear form $v$ on   $M$     if the map $M\to \kk$, $\alpha \mapsto v(\theta \otimes \alpha)$ is a counit of $M$.

 \begin{lemma}\label{trat}  Any   $\theta \in  M$   adjoint to    a cyclic bilinear form $v$ on   $M$  is symmetric.
 \end{lemma}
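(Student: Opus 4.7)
The plan is to unpack the definitions directly: set $\varepsilon:M\to \kk$, $\varepsilon(\alpha)=v(\theta\otimes \alpha)$, which by hypothesis is a counit, and then apply the cyclicity identity \eqref{forcyc} with the two arguments specialized to $\theta$. The main issue is careful bookkeeping with the iterated comultiplication.

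First, I would substitute $\alpha=\theta$ in \eqref{forcyc}. The left-hand side becomes $\varepsilon(\beta^2)\,\beta^1\otimes \beta^3$. Rewriting $\beta^1\otimes \beta^2 \otimes \beta^3$ as $(\mu\otimes \id)\mu(\beta)=(\beta^1)^1\otimes (\beta^1)^2\otimes \beta^2$ and applying the counit axiom $\varepsilon(\alpha^2)\alpha^1=\alpha$ to the pair $(\beta^1)^1,(\beta^1)^2$, this collapses to $\beta^1\otimes \beta^2=\mu(\beta)$. Thus, for every $\beta\in M$,
\begin{equation*}
\mu(\beta)=v(\beta\otimes \theta^2)\,\theta^3\otimes \theta^1.
\end{equation*}

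Next I specialize $\beta=\theta$, so that the right-hand side reads $\varepsilon(\theta^2)\,\theta^3\otimes \theta^1$, where now all indices refer to $\mu^3(\theta)$. Writing this iterated comultiplication instead as $(\id\otimes \mu)\mu(\theta)$, I have $\theta^1\otimes \theta^2\otimes \theta^3=\theta^1\otimes (\theta^2)^1\otimes (\theta^2)^2$, and the counit axiom applied to the pair $(\theta^2)^1,(\theta^2)^2$ collapses $\varepsilon(\theta^2)\theta^3\otimes \theta^1$ to $\theta^2\otimes \theta^1=p_{21}\mu(\theta)$. Combining the two computations yields $\mu(\theta)=p_{21}\mu(\theta)$, which is exactly symmetry of $\theta$.

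The only genuine subtlety is that the Sweedler-style indices $\theta^1,\theta^2,\theta^3$ refer to an expansion of $\mu^3(\theta)$ that is ambiguous up to the choice between $(\mu\otimes\id)\mu$ and $(\id\otimes\mu)\mu$; the coassociativity of $\mu$ (invoked in Section~\ref{mapfv}) reconciles the two, and switching between these two factorizations at the right moment is what makes the counit axiom applicable on each side. No further input from $v$ beyond its cyclicity and the adjointness assumption is needed.
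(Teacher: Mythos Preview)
Your proof is correct and is essentially the same as the paper's. The paper packages the argument via the operator identity $\widehat v\,p_{21}=p_{21}\,\widehat v$ applied to $\theta\otimes\theta$ (noting that $p_{21}$ fixes $\theta\otimes\theta$) after first showing $\widehat v(\theta\otimes\theta)=\mu(\theta)$, whereas you unfold \eqref{forcyc} directly and specialize $\alpha=\theta$, then $\beta=\theta$; the counit-collapse computations are identical.
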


\begin{proof} Since the map $\alpha \mapsto v(\theta \otimes \alpha)=\varepsilon (\alpha)$ is the counit of $M$, \begin{equation}\label{simmth---}\widehat v (\theta \otimes \theta)= v(\theta \otimes \theta^2) \theta^1\otimes \theta^3=\varepsilon (\theta^2)
\theta^1\otimes \theta^3=\theta^1\otimes  \theta^2={{\mu}}(\theta).\end{equation}
Therefore $\theta$  is symmetric:
\begin{equation*}\label{simmth} p_{21} {{\mu}} (\theta) = p_{21} \widehat v (\theta \otimes \theta)= \widehat v p_{21} (\theta \otimes \theta)=  \widehat v (\theta \otimes \theta)= {{\mu}}(\theta). \qedhere\end{equation*}
\end{proof}

 By Lemma~\ref{trat--}, the   map $ \tr_\theta:   A\to A_M$  associated with a   symmetric $\theta\in M$ induces a  linear map   $ \check A \to  A_M^M$. It is called the {\it trace} and  also   denoted  by $\tr_\theta $.

 \begin{lemma}\label{trat++} For a  double bracket $\double{-}{-}$ in a graded algebra $A$, a  cyclic bilinear form $v $ on a   coalgebra $M$, and any $\theta\in M$  adjoint to $v$,     the trace   $   \tr_\theta: \check A\to A_M$  carries  the bracket   $\langle- ,- \rangle$  in $  \check A$   to  the
bracket $\bracket{-}{-}_v$ in $A_M$. Consequently, the algebra $A(\theta)$ is closed under the
bracket $\bracket{-}{-}_v$.
 \end{lemma}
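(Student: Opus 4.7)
The plan is to reduce the statement to a direct comparison of two explicit expressions: the image under $\tr_\theta$ of a representative of $\langle a,b\rangle$, and the value of $\bracket{a_\theta}{b_\theta}_v$ produced by Lemma~\ref{bra}. The well-definedness of $\tr_\theta$ on $\check A$ is already granted by Lemmas~\ref{trat--} and~\ref{trat}, since $\theta$ is symmetric. Because the bracket $\bracket{-}{-}_v$ is a biderivation and $A(\theta)$ is, by definition, generated by elements of the form $a_\theta$, the closure statement will follow from the main identity together with the Leibniz rules.

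For the main identity, fix homogeneous $a,b\in A$ and expand $\double{a}{b}=\double{a}{b}'\otimes\double{a}{b}''$. By definition, $\langle a,b\rangle$ is the class in $\check A$ of $\double{a}{b}'\double{a}{b}''$, so applying $\tr_\theta$ and using the multiplicativity relation in $A_M$ I get
\begin{equation*}
\tr_\theta(\langle a,b\rangle)=(\double{a}{b}'\double{a}{b}'')_\theta=\double{a}{b}'_{\theta^1}\double{a}{b}''_{\theta^2}.
\end{equation*}
On the other hand, formula~\eqref{mainf} yields
\begin{equation*}
\bracket{a_\theta}{b_\theta}_v=v(\theta\otimes\theta^2)\,\double{a}{b}'_{\theta^1}\double{a}{b}''_{\theta^3}.
\end{equation*}
The adjointness hypothesis on $\theta$ means precisely that $\alpha\mapsto v(\theta\otimes\alpha)$ is the counit $\varepsilon_M$, so $v(\theta\otimes\theta^2)\theta^1\otimes\theta^3=\varepsilon_M(\theta^2)\theta^1\otimes\theta^3$. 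Applying $(\id_M\otimes\varepsilon_M\otimes\id_M)$ to ${{\mu}}^3(\theta)=(\id_M\otimes{{\mu}}){{\mu}}(\theta)$ and using the counit axiom reduces this to ${{\mu}}(\theta)=\theta^1\otimes\theta^2$. Hence
\begin{equation*}
\bracket{a_\theta}{b_\theta}_v=\double{a}{b}'_{\theta^1}\double{a}{b}''_{\theta^2}=\tr_\theta(\langle a,b\rangle),
\end{equation*}
which is the required identity on generators.

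For the closure statement, observe that the previous computation shows that $\bracket{a_\theta}{b_\theta}_v=\tr_\theta(\langle a,b\rangle)\in\tr_\theta(A)\subset A(\theta)$ for all $a,b\in A$. Since $A(\theta)$ is generated as an algebra by $\{a_\theta\}_{a\in A}$ and since $\bracket{-}{-}_v$ satisfies the graded Leibniz rules~\eqref{poisson1} and~\eqref{poisson2}, an easy induction on the length of monomials in the generators shows that $\bracket{A(\theta)}{A(\theta)}_v\subset A(\theta)$. There is essentially no obstacle here; the only point that deserves care is the counit manipulation $\varepsilon_M(\theta^2)\theta^1\otimes\theta^3={{\mu}}(\theta)$, whose justification was already carried out inside the proof of Lemma~\ref{trat} and can simply be invoked.
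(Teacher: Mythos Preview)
Your proof is correct and follows essentially the same approach as the paper: both reduce to the computation $\bracket{a_\theta}{b_\theta}_v=\double{a}{b}'_{\theta^1}\double{a}{b}''_{\theta^2}=(\double{a}{b}'\double{a}{b}'')_\theta=\tr_\theta(\langle a,b\rangle)$ via the identity $v(\theta\otimes\theta^2)\theta^1\otimes\theta^3=\mu(\theta)$, which the paper simply cites from the proof of Lemma~\ref{trat} while you spell it out again. Your closing paragraph on $A(\theta)$ being closed under $\bracket{-}{-}_v$ is slightly more explicit than the paper's, but the argument is the same.
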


\begin{proof}   Pick   any   $a,b\in A$ and let $\check a, \check b$ be their projections to $\check A$.  Then
\begin{eqnarray*}
\bracket{\tr_\theta(\check a)}{\tr_\theta(\check b)}_v &=&   \{ \,   a_\theta,   b_\theta \, \}_v \\
& \stackrel{\eqref{simmth---}}{=}&     \double{a}{b}'_{ \theta^1} \double{a}{b}''_{ \theta^2}\\
&=&     \left(\double{a}{b}' \double{a}{b}''\right)_{\theta}\\
&=&  \tr_\theta \left(\double{a}{b}' \double{a}{b}''\right)
\  {=} \   \tr_\theta\big(\langle \check a, \check b \rangle\big).
 \quad \quad \quad \quad \quad \qedhere   \end{eqnarray*}
\end{proof}

The following lemma gives  more information about   $ A(\theta)$   for some $\theta$.

 \begin{lemma}\label{tratxxx}    Any  non-degenerate cyclic  bilinear  form $v$  on a counital coalgebra  $M$ has a unique adjoint $\theta\in M$  and  $ A(\theta) \subset   A^{M}_M\cap A_M^{U}$ where $U=U(M^*)$.
 \end{lemma}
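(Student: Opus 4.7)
The plan is to first address existence and uniqueness of $\theta$, then reduce the two inclusions to statements already available in the paper, and finally prove the new ingredient: that $\theta$ is fixed by every inner automorphism coming from $U$.

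For existence and uniqueness, I would argue as follows. The condition \lq\lq$\theta$ is adjoint to $v$\rq\rq\ means precisely that $\mathrm{ad}_v(\theta)=\varepsilon_M$, where $\mathrm{ad}_v\colon M\to M^*$ is the left adjoint of $v$ (defined in Section~\ref{frfr----}). Since $v$ is non-degenerate by assumption, $\mathrm{ad}_v$ is a $\kk$-linear isomorphism, so there is a unique $\theta\in M$ with $\mathrm{ad}_v(\theta)=\varepsilon_M$. The inclusion $A(\theta)\subset A^M_M$ is then immediate from the two preceding lemmas: Lemma~\ref{trat} says that the unique adjoint $\theta$ is symmetric, and Lemma~\ref{trat--} gives $A(\theta)\subset A^M_M$ for any symmetric $\theta$.

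The new content is the inclusion $A(\theta)\subset A_M^U$. Since $U=U(M^*)$ acts on $A_M$ by graded algebra automorphisms and $A(\theta)$ is generated as a subalgebra by $\{a_\theta\}_{a\in A}$, it suffices to show that every $u\in U$ fixes each generator $a_\theta$. By the definition of the action (Section~\ref{mainactionsii}) one has $u\cdot a_\theta=a_{{}^u\theta}$, so the task reduces to proving ${}^u\theta=\theta$ for all $u\in U$. For this I would use the identity $\overline{{}^u\alpha}=u\,\overline{\alpha}\,u^{-1}$ established in the middle of the proof of Theorem~\ref{justxxxz}(ii), where $\overline{(-)}\colon M\to M^*$ is the cyclic structure $\mathrm{ad}_v$. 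Applied to $\alpha=\theta$, and using that $\overline{\theta}=\varepsilon_M$ is the two-sided unit of the algebra $M^*$, this gives
\begin{equation*}
\overline{{}^u\theta}=u\,\varepsilon_M\,u^{-1}=uu^{-1}=\varepsilon_M=\overline{\theta}.
\end{equation*}

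Finally, since $v$ is non-degenerate, the map $\mathrm{ad}_v\colon M\to M^*$ is injective (in fact bijective), so the equality $\overline{{}^u\theta}=\overline{\theta}$ forces ${}^u\theta=\theta$. Therefore $u\cdot a_\theta=a_\theta$ for every $a\in A$ and every $u\in U$. As $U$ acts by algebra automorphisms, it fixes every element of the subalgebra $A(\theta)$ generated by $\{a_\theta\}_{a\in A}$, proving $A(\theta)\subset A_M^U$ and completing the argument. The only mildly subtle step is the third one, and the main obstacle is simply recognizing that the relation $\overline{{}^u\alpha}=u\overline{\alpha}u^{-1}$, which appears inside the proof of Theorem~\ref{justxxxz}, is exactly what forces $\theta$ (whose image under $\mathrm{ad}_v$ is the unit $\varepsilon_M$) to be $U$-invariant.
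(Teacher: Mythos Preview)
Your proof is correct and follows essentially the same strategy as the paper: reduce $A(\theta)\subset A_M^U$ to the claim ${}^u\theta=\theta$, and deduce the latter from the injectivity of $\ad_v$ together with facts established in the proof of Theorem~\ref{justxxxz}. The only minor difference is which ingredient from that proof you invoke: you use the intermediate identity $\overline{{}^u\alpha}=u\,\overline{\alpha}\,u^{-1}$ and the observation $\overline{\theta}=\varepsilon_M$, whereas the paper uses the conclusion $v^u=v$ and computes $v({}^u\theta\otimes\alpha)=v(\theta\otimes\alpha)$ directly; these are equivalent routes to $\ad_v({}^u\theta)=\ad_v(\theta)$.
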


\begin{proof} The first claim is obvious. To prove the second claim, we
  must show that ${}^u (a_\theta)
 =a_\theta$     for all $u\in U  $ and $a\in A$. Pick any $\alpha \in M$ and observe that
$$v({}^u\theta \otimes  \alpha)= v({}^u\theta \otimes {}^u {}^{u^{-1}} \alpha)=v^u(\theta\otimes {}^{u^{-1}} \alpha)=v (\theta\otimes {}^{u^{-1}} \alpha)$$
$$   = \varepsilon_M( {}^{u^{-1}} \alpha ) \stackrel{\eqref{duac}}=u \varepsilon_M u^{-1}(\alpha )=
\varepsilon_M (\alpha     )=v( \theta \otimes  \alpha) $$
where   we use that   $v^u=v$. The injectivity of $\ad_v$ implies  that  ${}^u\theta=\theta$. Hence,   $ {}^u (a_\theta)
=a_{({}^u\theta)}=a_\theta $.
\end{proof}


The next claim yields  a rich source of adjoint elements of cyclic bilinear forms.

 \begin{lemma}\label{tratxxxbbb}  Let ${{{{\mathcal A}}}}$  be a unital   algebra whose underlying module is free of finite rank. Let   $ M={{{{\mathcal A}}}}^*$ be the dual coalgebra and let $\theta\in M$ be a trace-like element. Then $\theta$ is adjoint to the cyclic bilinear form $v_\theta$ on $M$ defined in Section~\ref{frofro}. \end{lemma}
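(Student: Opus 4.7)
The plan is to unwind all the definitions involved and recognize that the claim reduces to the trivial observation $\overline{\theta} = 1_{\mathcal A}$.

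First, I would recall that since ${\mathcal A}$ is unital, the dual coalgebra $M = {\mathcal A}^*$ is counital with counit $\varepsilon_M : M \to \kk$ given by $\varepsilon_M(\alpha) = \alpha(1_{\mathcal A})$ for $\alpha \in M$. (This is the standard dual of the unit; one verifies $\varepsilon_M(\alpha^1)\alpha^2 = \alpha$ by evaluating on any $b \in {\mathcal A}$ and using that $\alpha^1 \otimes \alpha^2$ is defined by $\alpha(ab) = \alpha^1(a)\alpha^2(b)$.) So what must be shown is that
\[
v_\theta(\theta \otimes \alpha) = \alpha(1_{\mathcal A}) \qquad \text{for all } \alpha \in M.
\]

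Next, I would recall from Theorem~\ref{CQPS+} applied to the symmetric Frobenius algebra $({\mathcal A}, (-,-)_\theta)$ and from the concluding remark of Section~\ref{frofro}, that the cyclic structure $M \to {\mathcal A}$, $\alpha \mapsto \overline{\alpha}$, associated to $\theta$ is characterized by $\alpha(a) = (a, \overline{\alpha})_\theta = \theta(a\overline{\alpha})$ for all $a \in {\mathcal A}$, and that the cyclic bilinear form satisfies $v_\theta(\alpha \otimes \beta) = \beta(\overline{\alpha})$.

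The key step is to identify $\overline{\theta}$: by the defining property, $\overline{\theta}$ is the unique element of ${\mathcal A}$ satisfying $\theta(a) = \theta(a\overline{\theta})$ for all $a \in {\mathcal A}$. Since $\theta(a \cdot 1_{\mathcal A}) = \theta(a)$ trivially, uniqueness (which follows from non-degeneracy of $(-,-)_\theta$) gives $\overline{\theta} = 1_{\mathcal A}$.

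Plugging this into the formula for $v_\theta$:
\[
v_\theta(\theta \otimes \alpha) = \alpha(\overline{\theta}) = \alpha(1_{\mathcal A}) = \varepsilon_M(\alpha),
\]
which is exactly what was required. So $\theta$ is adjoint to $v_\theta$. There is no real obstacle; the whole statement is essentially a bookkeeping check that the cyclic structure produced by Theorem~\ref{CQPS+} sends $\theta$ to the unit of ${\mathcal A}$.
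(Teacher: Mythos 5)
Your proof is correct and follows essentially the same route as the paper: both identify $\overline{\theta}=1_{\mathcal A}$ from the defining property $\theta(a\overline{\theta})=(a,\overline{\theta})_\theta=\theta(a)$ together with uniqueness from non-degeneracy, and then conclude $v_\theta(\theta\otimes\alpha)=\alpha(\overline{\theta})=\alpha(1_{\mathcal A})$. The only difference is cosmetic: you spell out that $\alpha\mapsto\alpha(1_{\mathcal A})$ is the counit of $M={\mathcal A}^*$, a point the paper leaves implicit.
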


\begin{proof}  Consider the symmetric Frobenius algebra $({{{{\mathcal A}}}}, (-,-)_\theta)$ as  in Section~\ref{frofro}. The  cyclic structure $M\to {{{{\mathcal A}}}}, \alpha \mapsto \overline \alpha $ defined in Theorem~\ref{CQPS+}  carries $\theta$ to $1_{{{{{\mathcal A}}}}}$. Indeed,   for all $a\in  {{{{\mathcal A}}}}$,
$$\theta (a  \overline \theta) =(a, \overline \theta)_\theta =\theta (a) = \theta (a 1_{{{{{\mathcal A}}}}})$$
and so $ \overline \theta=1_{{{{{\mathcal A}}}}}$. For any $\alpha\in M$, we have $v_\theta (\theta \otimes \alpha)=\alpha (\overline \theta) = \alpha (1_{{{{{\mathcal A}}}}})$.
Therefore $\theta$ is adjoint to   $v_\theta$. \end{proof}

 \section{The unital case}\label{remremrem}

 For a unital graded algebra $A$ and a counital coalgebra $M$, we define unital versions $\widetilde A_M^+$, $A_M^+$ of   $\widetilde A_M$, $A_M$. We also
 formulate    Hamiltonian reduction and discuss double quasi-Poisson algebras.

 \subsection{Unital representation algebras}
 Unital graded algebras and graded algebra  homomorphisms   carrying $1$ to $1$
 form a category ${\grAlg}^+$. For a unital graded algebra~$ A$ and a counital coalgebra~$M$ with counit $\varepsilon=\varepsilon_M$,  we define a unital graded algebra  $\widetilde A_M^+$    as follows. First, we adjoin a two-sided unit   to $\widetilde A_M$, that is consider the unital graded algebra $ \kk e\oplus  \widetilde A_M$ with two sided unit~$  e $ of degree zero.  A typical element of $\kk e\oplus  \widetilde A_M$ is represented by a   non-commutative polynomial in the generators $\{a_\alpha \, \vert \, a\in A, \alpha\in M\}$. By definition, $\widetilde A_M^+$  is  the  quotient of the algebra  $\kk e\oplus  \widetilde A_M$  by   the      normalization relations  $\{(1_A)_{\alpha}=\varepsilon   ( \alpha ) e\}_{\alpha\in M}$. The construction of $ \widetilde A^+_{M}$    obviously extends to a   functor $f\mapsto  \widetilde f^+_M:  {\grAlg}^+ \to {\grAlg}^+$.


 \begin{lemma}\label{just+}
For any unital graded algebra $B$,  the   convolution algebra $H_M(B) $ is unital with  unit $M\to B, \alpha \mapsto \varepsilon   ( \alpha ) 1_B$. The bijection \eqref{eq:adjunction} induces   a   bijection
\begin{equation}\label{eq:adjunctionkkk}
\Hom_{{\grAlg}^+} ( \widetilde A_{M}^+ , B)
\stackrel{\simeq}{\longrightarrow} \Hom_{{\grAlg}^+} (A,  H_M(B))
\end{equation}
which is natural in $A$ and $B$.
\end{lemma}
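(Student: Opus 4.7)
The plan breaks into two natural parts: first verify the unitality claim about $H_M(B)$, then promote the bijection of Lemma~\ref{just} to the unital setting.

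First I would check that $\iota_B : M \to B$ defined by $\iota_B(\alpha) = \varepsilon(\alpha) 1_B$ is a two-sided unit of the convolution algebra $H_M(B)$. For any $f \in H_M(B)$ and $\alpha \in M$, a direct computation using the definition of the convolution product and the counit axiom gives
\[
(\iota_B \cdot f)(\alpha) = m_B(\iota_B \otimes f)\mu(\alpha) = \varepsilon(\alpha^1) 1_B \cdot f(\alpha^2) = f(\varepsilon(\alpha^1)\alpha^2) = f(\alpha),
\]
and the right identity is verified analogously. This is the only place the counitality of $M$ is used.

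Next I would upgrade the bijection. Given a unital homomorphism $r : \widetilde A_M^+ \to B$, compose with the canonical map $\widetilde A_M \hookrightarrow \kk e \oplus \widetilde A_M \twoheadrightarrow \widetilde A_M^+$ to obtain a graded algebra homomorphism $\widetilde A_M \to B$, and let $s : A \to H_M(B)$ be its image under \eqref{eq:adjunction}. To see that $s$ is unital, note that in $\widetilde A_M^+$ the normalization relations force $(1_A)_\alpha = \varepsilon(\alpha) e$, hence
\[
s(1_A)(\alpha) = r((1_A)_\alpha) = \varepsilon(\alpha) r(e) = \varepsilon(\alpha) 1_B = \iota_B(\alpha),
\]
so $s(1_A) = \iota_B$, the unit of $H_M(B)$.

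Conversely, given a unital $s : A \to H_M(B)$, use Lemma~\ref{just} to obtain a graded algebra homomorphism $r_0 : \widetilde A_M \to B$, and extend it to the unitalization $\kk e \oplus \widetilde A_M$ by sending $e$ to $1_B$ (this extension is a graded algebra homomorphism because $e$ is the formally adjoined two-sided unit). The only nontrivial step is to check that this extension kills the normalization relations, so that it descends to $\widetilde A_M^+$; but
\[
r_0((1_A)_\alpha) - \varepsilon(\alpha)\,1_B = s(1_A)(\alpha) - \iota_B(\alpha) = 0
\]
by unitality of $s$. The resulting map $\widetilde A_M^+ \to B$ sends $e$ to $1_B$ and is therefore a morphism in ${\grAlg}^+$. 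The two constructions are mutually inverse and natural in $A$ and $B$ because these properties already hold at the level of \eqref{eq:adjunction} in Lemma~\ref{just}; the only additional content is that the unital condition on one side corresponds exactly to the unital condition on the other, which is precisely what the computations above establish. There is no real obstacle here: the argument is a straightforward bookkeeping of the universal property, with the counit axiom of $M$ providing the single substantive ingredient that makes $\iota_B$ the unit on the target side.
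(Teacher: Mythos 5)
Your proposal is correct and follows essentially the same route as the paper: both directions reuse the constructions $r\mapsto s_r$ and $s\mapsto r_s$ from Lemma~\ref{just}, with the two displayed computations (that $s(1_A)=\iota_B$ via the normalization relations, and that $r_s$ kills $(1_A)_\alpha-\varepsilon(\alpha)e$ via unitality of $s$) matching the paper's proof verbatim in content. The only difference is that you spell out the counit-axiom verification that $\iota_B$ is a two-sided unit of $H_M(B)$, which the paper dismisses as obvious.
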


\begin{proof} The first claim is obvious. To prove the second claim, we apply the same constructions $r\mapsto s_r$ and $s\mapsto r_s$ as in the proof of Lemma~\ref{just}.
Observe that the map  $s_r:A\to \Hom_\kk (M,B)$  carries $1_A$ to
  the map $$M\to B,\, \alpha \mapsto  r( (1_A)_{\alpha }) = r(\varepsilon  ( \alpha ) e) = \varepsilon  ( \alpha ) r(e)= \varepsilon  ( \alpha ) 1_B$$
which is   the unit of the algebra $
  H_M(B)$. The map $r=r_s$ is compatible with the normalization relations because  \begin{equation*} r( (1_A)_{\alpha })= s(1_A)(\alpha) = \varepsilon  ( \alpha ) 1_B
=r( \varepsilon  ( \alpha ) e ). \qedhere \end{equation*}
\end{proof}

Replacing $ \widetilde A_M^+$   by   $A_M^+=\Com(\widetilde A_M^+)$ we obtain that for any unital commutative graded algebra $B$,
\begin{equation}\label{uncogral}
 \Hom_{\grCom^+} ({  A}_M^+, B) {\simeq}  \Hom_{\grAlg^+} (\widetilde A_{M}^+, B)
{\simeq} \Hom_{\grAlg^+} (A, H_M(B)) 
\end{equation}
where $\grCom^+$ is the category of unital
commutative  graded algebras. The   algebra $A_M^+$ can be alternatively  obtained by adjoining a   two-sided unit $e$  to   $  A_M=\Com(\widetilde A_M)$ and then factoring $ \kk e\oplus    A_M$ by the   relations $\{(1_A)_{\alpha}=\varepsilon   ( \alpha ) e\}_{\alpha\in M}$.
The construction of $   A^+_{M}$  obviously   extends to a functor  $f\mapsto   f^+_M: {\grAlg}^+\to  \grCom^+$.

\subsection{The   actions} Given a unital graded algebra $A$ and a counital coalgebra $M$, the action of     $\Aut(M)$ on $\widetilde A_M$ defined in  Section~\ref{thegrnsAN+++}     induces  an action  of $\Aut(M)$   on  $\widetilde A_M^+$   by graded algebra automorpisms fixing the unit~$e$.
 We check the compatibility   with the normalization relations: for $g\in \Aut(M)$, $\alpha\in M$,
$$g(1_A)_\alpha=(1_A)_{g(\alpha)} = \varepsilon ( g (\alpha) ) e = \varepsilon (  \alpha)  e  =g (\varepsilon (  \alpha)  e) .$$
Similarly, the action of the Lie algebra  $\coder (M) $ on $\widetilde A_M$ defined in  Lemma~\ref{acti}   induces  an action  of $\coder (M) $   on  $\widetilde A_M^+$    annihilating    $e$. The compatibility   with the normalization relations holds because  $\varepsilon \delta=0$ for all $\delta\in \coder (M) $; this equality is deduced from \eqref{coco} by applying $\varepsilon \otimes \varepsilon$ to both sides.

The  actions of $\Aut(M)$ and $\coder (M) $ on  $\widetilde A_M^+$ form an action of the Lie pair $(\Aut(M), \coder (M)) $ on  $\widetilde A_M^+$
and induce an action of the Lie pair $(U(M^*), \underline M^*) $ on  $  A_M^+$.
As a consequence,    the  set \begin{equation}\label{Afix+++++} E=(A^+_M)^{{\underline M}^*}= \{ x\in A_M^+\, \vert\, \delta_\varphi( x)=0 \,\, {\rm {for \,\,  all}} \,\, \varphi\in {\underline M}^*\}\end{equation}
is a $U(M^*)$-invariant subalgebra of $A^+_M$.
Since the derivations $\delta_\varphi$ of $A^+_M$ are grading-preserving, ${E}=\oplus_{p\in \ZZ} \, {E}\cap (A^+_M)^p$. This makes ${E}$ into a  commutative graded algebra  with unit   $1_E=e$.

 \subsection{The bracket $\bracket{-}{-}_v^+$ and Hamiltonian reduction}\label{The bracketsa}
 Let $\double{-}{-}$ be  an $n$-graded double bracket  in a unital graded algebra $A$ with $n\in \ZZ$, and let $v$ be a cyclic bilinear form on a counital coalgebra~$M$.   The  induced $n$-graded biderivation $\bracket{-}{-}_v$ in $A_M $  uniquely  extends to a   bracket
 in   $ \kk e\oplus    A_M$ annihilating~$e$ both on the left and on the right. Since $\double{1_A}{A}=\double{A}{1_A}=0$, the latter bracket annihilates $(1_A)_{\alpha}-\varepsilon_M  ( \alpha ) e  $    for all $\alpha\in M$ and descends to an $n$-graded  biderivation $\bracket{-}{-}_v^+$ in   $  A_M^+$.
 The Jacobi form of $\bracket{-}{-}_v^+$ can be computed from Lemma~\ref{bra}.
Hence, if $\double{-}{-}$ is   Gerstenhaber, then so is $\bracket{-}{-}_v^+$.

 Theorem~\ref{justxxxzLie} implies that   the bracket $\bracket{-}{-}_v^+$ is      $\underline M^* $-invariant. As a consequence,    the  graded algebra $E\subset A^+_M$ defined by \eqref{Afix+++++} satisfies   $\bracket{E }{E}_v^+ \subset E$.   This graded algebra   appears in     Hamiltonian reduction as follows.

\begin{theor}\label{mommap-}  Let $B$ be a   graded algebra and  let $p:A\to B$ be a graded algebra epimorphism whose kernel   is generated, as a 2-sided ideal of $A$, by a set $\mathcal M \subset A^{-n}$ satisfying the following condition: for every ${\xi}\in \mathcal M$ there is a scalar $k_{\xi} \in \kk$ such that for all $a\in A$,
$$\double{{\xi}}{a}\equiv k_{\xi}(a\otimes 1_A-1_A\otimes a)\ mod (A\otimes \Ker p + \Ker p \otimes A).$$
Then  the $n$-graded biderivation $\bracket{-}{-}_v^+$ in   $E $   descends uniquely to an $n$-graded biderivation $\bracket{-}{-}$   in the  graded algebra $p^+_M(E)\subset B^+_M$.  If   $\double{-}{-} $ is Gerstenhaber, then so is  $\bracket{-}{-}$.
\end{theor}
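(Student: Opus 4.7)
The plan is to identify $\mathcal I := \Ker(p_M^+)\subset A_M^+$ explicitly and then use the moment-map hypothesis to show that, modulo $\mathcal I$, the bracket $\bracket{\xi_\alpha}{-}_v^+$ coincides with the action of an inner coderivation; since such coderivations annihilate $E$, the bracket will descend to $p_M^+(E)$.

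First I would observe that, by the universal property of $A_M^+$ and $B_M^+$ from Lemma~\ref{just+} and \eqref{uncogral}, the kernel $\mathcal I$ is precisely the ideal of $A_M^+$ generated by $\{\xi_\alpha:\xi\in \mathcal M,\,\alpha\in M\}$. Because $\delta_\varphi(\xi_\alpha) = \xi_{\delta_\varphi(\alpha)}$ stays in $\mathcal I$ for every $\varphi\in \underline M^*$, the ideal $\mathcal I$ is stable under the $\underline M^*$-action on $A_M^+$.

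The technical heart is the identity in $A_M^+$: for $\xi\in\mathcal M$, $\alpha,\beta\in M$ and $a\in A$,
\begin{equation*}
\bracket{\xi_\alpha}{a_\beta}_v^+ \equiv k_\xi\, a_{\delta_{\overline{\alpha}}(\beta)}\pmod{\mathcal I},
\end{equation*}
where $\overline\alpha := v(\alpha\otimes -)\in M^*$. To prove this I would substitute the moment-map condition $\double{\xi}{a} = k_\xi(a\otimes 1_A - 1_A\otimes a)+r$ with $r\in A\otimes\Ker p+\Ker p\otimes A$ into formula \eqref{mainf}. The contribution from $r$ lies in $\mathcal I$ since any generator $c_\gamma$ with $c\in\Ker p$ belongs to $\mathcal I$. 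The remaining term becomes, via the normalization relation $(1_A)_\gamma = \varepsilon(\gamma)\,e$,
\begin{equation*}
k_\xi\bigl(\varepsilon(\beta^\alpha)\,a_{\alpha_\beta} - \varepsilon(\alpha_\beta)\,a_{\beta^\alpha}\bigr).
\end{equation*}
Two short counit computations starting from $\widehat v(\alpha\otimes\beta)=v(\alpha\otimes\beta^2)\beta^1\otimes\beta^3$ give $\varepsilon(\beta^\alpha)\alpha_\beta = v(\alpha\otimes\beta^2)\beta^1$ and $\varepsilon(\alpha_\beta)\beta^\alpha = v(\alpha\otimes\beta^1)\beta^2$, whose difference is exactly $\delta_{\overline\alpha}(\beta)$. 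Since both $\bracket{\xi_\alpha}{-}_v^+$ and $\delta_{\overline\alpha}$ are derivations of $A_M^+$ and $\mathcal I$ is $\delta_{\overline\alpha}$-stable, the identity extends from the generators $a_\beta$ to all of $A_M^+$, giving $\bracket{\xi_\alpha}{x}_v^+ \equiv k_\xi\,\delta_{\overline\alpha}(x)\pmod{\mathcal I}$ for every $x\in A_M^+$.

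From here the descent is routine. For $x\in E$, $\delta_{\overline\alpha}(x)=0$, so $\bracket{\xi_\alpha}{x}_v^+\in\mathcal I$. Writing any $y\in\mathcal I$ as $\sum c_i(\xi_i)_{\alpha_i}$ and applying the $n$-graded Leibniz rule to $\bracket{y}{x}_v^+$, both resulting types of summand lie in $\mathcal I$: the first because $\bracket{(\xi_i)_{\alpha_i}}{x}_v^+\in\mathcal I$, the second because it carries $(\xi_i)_{\alpha_i}\in\mathcal I$ as a factor. Hence $\bracket{\mathcal I\cap E}{E}_v^+\subset\mathcal I$, so the restriction of $\bracket{-}{-}_v^+$ to $E\otimes E$ factors uniquely through the surjection $p_M^+|_E\colon E\to p_M^+(E)$, and the descended bracket inherits the biderivation property. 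If $\double{-}{-}$ is Gerstenhaber, then $\bracket{-}{-}_v^+$ is Gerstenhaber by the unital form of Theorem~\ref{th-main} (Section~\ref{The bracketsa}), whence the Jacobi form vanishes on $E$ and hence on $p_M^+(E)$. The main obstacle is the intermediate identity in the middle paragraph: tracking the cyclic form $\widehat v$ through the counit contractions and recognising $\delta_{\overline\alpha}(\beta)$.
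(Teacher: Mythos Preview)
Your proposal is correct and follows essentially the same route as the paper's proof: identify $\Ker p_M^+$ as the ideal generated by the $\xi_\alpha$, establish the key congruence $\bracket{\xi_\alpha}{x}_v^+\equiv k_\xi\,\delta_{\overline\alpha}(x)\pmod{\mathcal I}$ via the normalization relation and the counit computation yielding $\delta_{\overline\alpha}(\beta)$, and then use the Leibniz rule plus antisymmetry to show the bracket descends. The only cosmetic difference is that the paper first treats the exact equality $\double{\xi}{a}=k(a\otimes 1_A-1_A\otimes a)$ before passing to the congruence, whereas you work modulo $\mathcal I$ from the start; your remark that $\mathcal I$ is $\delta_{\overline\alpha}$-stable is true but not actually needed for the extension step (the difference of two derivations landing in the ideal $\mathcal I$ on generators lands in $\mathcal I$ everywhere).
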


\begin{proof}   The uniqueness of  $\bracket{-}{-}$  is obvious, and we need only to prove the existence. We begin by considering   some  ${\xi}\in A^{-n} $ and $k  \in \kk$ such that  for all $a\in A$,
$$\double{{\xi}}{a}= k (a\otimes 1_A-1_A\otimes a). $$
 We claim that for any $\alpha\in M$,   $x\in A^+_M$,
\begin{equation} \label{momo} \bracket{{\xi}_\alpha}{x}^+_v=k \delta_{\overline \alpha}(x)\end{equation}
where the  overbar stands for the cyclic structure   $M\to M^*$ determined by   $v$. Both sides of \eqref{momo} are derivations in $x$ (here we use that ${\xi}\in A^{-n} $). Hence, it is enough to prove \eqref{momo} for each generator $x=a_\beta \in A^+_M$ where $a\in A$, $\beta\in M$.   We have
$$\bracket{{\xi}_\alpha}{a_\beta}^+_v = k \double{{\xi} }{a }^1_{\alpha_\beta} \double{{\xi} }{a }^2_{\beta^\alpha} =k \big ( a_{\alpha_\beta} (1_A)_{\beta^\alpha} -
 (1_A)_{\alpha_\beta} a_{\beta^\alpha}\big )$$
$$=k \big ( a_{\alpha_\beta} \varepsilon ({\beta^\alpha}) -
 \varepsilon ({\alpha_\beta}) a_{\beta^\alpha} \big )=ka_{\varepsilon ({\beta^\alpha}) \alpha_\beta- \varepsilon ({\alpha_\beta})  {\beta^\alpha}}  $$
 where $\varepsilon=\varepsilon_M:M\to \kk$ is the counit of $M$.
  Observe that
 \begin{eqnarray*}\label{momo1}
{\varepsilon ({\beta^\alpha}) \alpha_\beta- \varepsilon ({\alpha_\beta})  {\beta^\alpha}}  &=&  (\id_M \otimes \varepsilon- \varepsilon \otimes \id_M) (\alpha_\beta \otimes \beta^\alpha) \\
\nonumber &=&  (\id_M \otimes \varepsilon- \varepsilon \otimes \id_M) v(\alpha\otimes \beta^2) \beta^1 \otimes \beta^3\\
\nonumber &=&  v(\alpha\otimes \varepsilon (\beta^3) \beta^2) \beta^1 -v(\alpha\otimes \varepsilon (\beta^1) \beta^2) \beta^3\\
&=& v(\alpha\otimes  \beta^2) \beta^1 -v(\alpha\otimes  \beta^1) \beta^2 \\
&=& \overline \alpha( \beta^2) \beta^1 -\overline \alpha( \beta^1) \beta^2 =\delta_{\overline \alpha} (\beta).
\end{eqnarray*}
Therefore  $\bracket{{\xi}_\alpha}{a_\beta}^+_v=ka_{\delta_{\overline \alpha} (\beta)}=k\delta_{\overline \alpha} (a_\beta)$.
 This proves \eqref{momo}.

  The assumptions of the theorem imply  that the kernel,  $J$, of the homomorphism $  p^+_M :A^+_M\to B^+_M$ is generated, as a two-sided ideal of $A^+_M$, by the set $\{{\xi}_\alpha\, \vert \, {\xi}\in \mathcal M, \alpha\in M\}$.  We claim that for all the generators ${\xi}_\alpha $ and all     $x\in A^+_M$,
 \begin{equation} \label{momoweak} \bracket{{\xi}_\alpha}{x}^+_v \equiv k_{\xi} \delta_{\overline \alpha}(x) \quad {\rm mod} \,\,  J . \end{equation}
As above, it suffices to verify \eqref{momoweak} for   $x=a_\beta$ of $A^+_M$ where $a\in A$, $\beta\in M$.  By the assumptions,    $\double{{\xi} }{a }$  is a sum of $k_{\xi}   ( a\otimes 1_A- 1_A\otimes a   )$ and a finite number of vectors    $  b\otimes c \in A\otimes A$
 with $b  $ or $c  $ in $\Ker p$. The contributions of these  terms to $
\bracket{{\xi}_\alpha}{a_\beta}^+_v$ are equal respectively to $
k_{\xi} \delta_{\overline \alpha}(x)$ and $   b_{\alpha_\beta} c_{\beta^\alpha}\in J$.
  This yields \eqref{momoweak}.

  Formula \eqref{momoweak} implies that $\bracket{{\xi}_\alpha}{{E}}^+_v\subset J$ for all ${\xi}\in \mathcal M$, $\alpha \in M$.
 Using the   Leibniz rule in the first variable, we   deduce that $\bracket{J}{{E}}^+_v \subset J$. Therefore,
 $$\bracket{{E} \cap J}{{E}}^+_v \subset \bracket{{E}  }{{E}}^+_v \cap \bracket{  J}{{E}}^+_v \subset  {E} \cap J.$$ By the antisymmetry of
 $\bracket{-}{-}^+_v$,  also
  $\bracket{{E}}{{E} \cap J}^+_v \subset {E} \cap J$.
 Hence, the bracket
 $\bracket{-}{-}^+_v$ in $  {E}$
 descends to a   bracket $\bracket{-}{-}$ in $p^+_M(E)=E/({E} \cap J)$. The properties of $\bracket{-}{-}$ stated in the theorem follow from the properties of
 $\bracket{-}{-}^+_v$.
\end{proof}

   It is clear that in Theorem~\ref{mommap-},
\begin{equation*}  p^+_M(E) \subset (B^+_M)^{{\underline M}^*}= \{ x\in B_M^+\, \vert\, \delta_\varphi(x)=0 \,\, {\rm {for \,\,  all}} \,\, \varphi\in {\underline M}^*\}   \end{equation*}
and   the  map $p^+_M:A^+_M\to B^+_M$  is $U$-equivariant where $U=U(M^*)$. The  action of $U $ on $B^+_M$ restricts to an action of $U $ on $p^+_M(E)$.
Theorem~\ref{justxxxz}   implies that   the bracket $\bracket{-}{-}_v^+$  and   the induced bracket in $p^+_M(E)$ are   $U  $-invariant.

\begin{corol} Let, under the assumptions of Theorem~\ref{mommap-},  $\theta\in M$ be adjoint to $v$ and let $B(\theta)^+$ be the unital subalgebra of $B^+_M$ generated by the set $\{b_\theta\}_{b\in B}$. There is a unique $n$-graded biderivation $\{-,-\}$ in $B(\theta)^+$  such that \begin{equation}\label{nice}  \{    a_\theta,   b_\theta   \}  =     \big ( \double{a}{b}'  \double{a}{b}'' \big )_{ \theta } \end{equation}
for any $a,b\in B$. If    $\double{-}{-} $ is   Gerstenhaber, then so is  $\bracket{-}{-}$.
\end{corol}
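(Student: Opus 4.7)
The strategy is to exhibit $B(\theta)^+$ as a subalgebra of $p^+_M(E)$ that is closed under the biderivation of Theorem~\ref{mommap-}, and then identify the bracket on generators via the adjointness of $\theta$ to $v$.

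First, Lemma~\ref{trat} yields that $\theta$ is symmetric, and (as observed in the proof of Lemma~\ref{trat--}) every inner coderivation of $M$ annihilates $\theta$. Hence $\delta_\varphi(a_\theta)=a_{\delta_\varphi(\theta)}=0$ for every $a\in A$ and $\varphi\in M^*$, and since $\delta_\varphi$ also kills the adjoined unit $e$, the unital subalgebra $A(\theta)^+\subset A^+_M$ generated by $\{a_\theta\}_{a\in A}$ lies in $E=(A^+_M)^{\underline M^*}$. Lemma~\ref{trat++}, applied verbatim to $\bracket{-}{-}^+_v$ (the unit $e$ plays no role since $\bracket{e}{-}^+_v=0$), computes for $a,b\in A$
\[
\bracket{a_\theta}{b_\theta}^+_v=(\double{a}{b}'\double{a}{b}'')_\theta\in A(\theta)^+,
\]
and the Leibniz rules then force $\bracket{A(\theta)^+}{A(\theta)^+}^+_v\subset A(\theta)^+$.

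Applying the graded algebra homomorphism $p^+_M:A^+_M\to B^+_M$ and using surjectivity of $p$, we have $p^+_M(A(\theta)^+)=B(\theta)^+\subset p^+_M(E)$. The biderivation $\bracket{-}{-}$ on $p^+_M(E)$ from Theorem~\ref{mommap-} is by construction the descent of $\bracket{-}{-}^+_v|_E$, so it preserves the subalgebra $B(\theta)^+$; define $\{-,-\}$ as its restriction to $B(\theta)^+$. For $a,b\in B$ and any lifts $\tilde a,\tilde b\in A$,
\[
\{a_\theta,b_\theta\}=p^+_M\!\big(\bracket{\tilde a_\theta}{\tilde b_\theta}^+_v\big)=p^+_M\!\big((\double{\tilde a}{\tilde b}'\double{\tilde a}{\tilde b}'')_\theta\big),
\]
which is the precise meaning of~\eqref{nice}; independence of the choice of lifts is automatic, being supplied by the descent part of Theorem~\ref{mommap-}.

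Uniqueness is immediate, as the set $\{b_\theta\}_{b\in B}$ generates $B(\theta)^+$ as a unital algebra, so any biderivation agreeing with~\eqref{nice} on generators is determined by the Leibniz rules. For the Gerstenhaber assertion, if $\double{-}{-}$ satisfies the Jacobi identity, then by Theorem~\ref{th-main} together with its unital extension of Section~\ref{The bracketsa} the bracket $\bracket{-}{-}^+_v$ is Gerstenhaber on $A^+_M$; by the final clause of Theorem~\ref{mommap-} the descent to $p^+_M(E)$ inherits this property, and the Jacobi identity restricts to the closed subalgebra $B(\theta)^+$. The only genuinely subtle point is that $\double{-}{-}$ itself need not descend to a double bracket on $B$, so the right-hand side of~\eqref{nice} must be read through lifts as above; once that interpretation is in place, the corollary is a direct consequence of Lemma~\ref{trat++} and Theorem~\ref{mommap-}.
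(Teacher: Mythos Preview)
Your argument is correct and follows the same line as the paper's proof: invoke Lemmas~\ref{trat--} and~\ref{trat} to place $B(\theta)^+$ inside $p^+_M(E)$, then use Lemma~\ref{trat++} to identify the bracket on generators and conclude closure. Your write-up is simply more explicit, in particular about reading \eqref{nice} through lifts to $A$ (a genuine subtlety the paper glosses over), but the underlying strategy is identical.
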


\begin{proof} Lemmas~\ref{trat--} and~\ref{trat} imply that  $B(\theta)^+ \subset p^+_M(E)$. Lemma~\ref{trat++} shows that the   bracket in $p^+_M(E)$ provided by Theorem~\ref{mommap-} evaluates on the generators of   $B(\theta)^+$ via \eqref{nice}. Therefore $B(\theta)^+$ is closed  under this bracket. \end{proof}

\subsection{Moment maps}  To apply Theorem~\ref{mommap-}, one may   pick  any  set   $\mathcal M\subset A^{-n}$   and   define   $B$ to be the quotient of $A$ by the two-sided ideal generated by $\mathcal M$. The set $\mathcal M$ may consist of just one element  ${\xi} \in   A^{-n}$. The conditions of Theorem~\ref{mommap-} are satisfied, for example, if
  $\double{{\xi}}{a}=  a   \otimes 1_A- 1_A\otimes  a$ for all $a\in A$.
 In this case, ${\xi}$ is called a {\it moment map}, cf.   \cite{VdB, MT2}. For $n=0$, one   considers   {\it multiplicative moment maps}  $\eta\in   A^0$ such that  for all $a\in A$, $$\double{\eta}{a}= a\otimes  \eta - \eta\otimes a +a \eta \otimes 1_A- 1_A\otimes \eta a , $$
cf.   \cite{VdB, MT2}. Then the  1-element  set $\mathcal M=\{{\xi}\}$ with ${\xi}=\eta-1_A$ satisfies the conditions of Theorem~\ref{mommap-} for $k_{\xi}=2$.

\subsection{Double quasi-Poisson algebras}

   In parallel with  the notion  of a quasi-Poisson bracket  \cite{AKsM},   Van den Bergh     \cite{VdB} introduced a class of double quasi-Poisson     brackets in unital (non-graded) algebras.       A     double bracket $\double{-}{-}$ in      a unital   algebra $A$   is  {\it quasi-Poisson}  if the
induced tribracket  in $A$  is computed by
\begin{eqnarray}\label{compE} \quad \quad
{\triple{a}{b}{c}} &=&  c \otimes a \otimes b + {1_A} \otimes ab \otimes c +
a \otimes {1_A} \otimes bc   +  ca\otimes b \otimes {1_A} \\
\nonumber &&  - a\otimes b \otimes c -  {1_A}  \otimes a \otimes bc -  ca \otimes {1_A} \otimes b -   c \otimes ab \otimes {1_A}
\end{eqnarray}
for any $a,b,c \in A$. The pair $(A,\double{-}{-})$ is   then a  \emph{double quasi-Poisson algebra}.    Note that a double quasi-Poisson
bracket in our sense is $2$ times a double quasi-Poisson bracket in the sense of \cite{VdB}.

\begin{theor}\label{quasi}   Let $(A,\double{-}{-})$ be a double quasi-Poisson algebra,  and let $v$ be a cyclic bilinear form   on a counital coalgebra $M$.   The  restriction of  the  
 bracket $\bracket{-}{-}_v^+$ in $A^{+}_M $ to  the  algebra $E \subset A^{+}_M$ defined by \eqref{Afix+++++}   satisfies the   Jacobi identity and makes ${E} $ into a Poisson algebra.
\end{theor}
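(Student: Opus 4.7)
The plan is to reduce the Jacobi identity on $E$ to an explicit computation on generators of $A^+_M$, substitute the quasi-Poisson tribracket formula \eqref{compE}, and reorganize the result so that every surviving term involves an inner coderivation $\delta_\varphi$ acting on one of the arguments; since by construction $\delta_\varphi$ annihilates $E$, Jacobi will follow.

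First, I apply Lemma~\ref{bra} in the unital setting of Section~\ref{The bracketsa}. For generators $a_\alpha, b_\beta, c_\gamma \in A^+_M$, the Jacobi form of $\bracket{-}{-}_v^+$ is given by the expression $Q - R$ from \eqref{IMP1-}--\eqref{IMP2}, built from $\triple{a}{b}{c}$ and $\triple{a}{c}{b}$. In the quasi-Poisson case, \eqref{compE} writes each tribracket as a sum of eight terms: two ``pure'' ones ($c\otimes a\otimes b$ and $-a\otimes b\otimes c$) and six containing the unit $1_A$. In $A^+_M$ the normalization $(1_A)_\mu = \varepsilon(\mu)\, e$ turns each $1_A$-factor into a scalar (the counit applied to one iterated $\widehat v$-index of $\alpha,\beta,\gamma$), so the substitution produces a $16$-term polynomial expression in the generators of $A^+_M$.

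Next I reorganize this expression using the cyclic identities \eqref{F--F}--\eqref{F--F+3}, the symmetry of $v$ (Theorem~\ref{justxxxz}(i)), the formula \eqref{fghje}, and the counit axiom $\varepsilon(\xi^1)\xi^2 = \xi = \varepsilon(\xi^2)\xi^1$. The key observation is the identity, derivable from \eqref{fv} and the counit axioms, that
\[
\varepsilon(\alpha_\xi)\xi^\alpha - \varepsilon(\xi^\alpha)\alpha_\xi \;=\; \overline{\alpha}(\xi^1)\xi^2 - \overline{\alpha}(\xi^2)\xi^1 \;=\; -\delta_{\overline{\alpha}}(\xi),
\]
and the analogous identities for the $\beta$- and $\gamma$-indices. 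Pairing up the twelve $\varepsilon$-terms of $Q-R$ in the appropriate way (the pairs are dictated by the cyclic identities), the $1_A$-contributions collapse to expressions involving $\delta_{\overline \alpha}$, $\delta_{\overline \beta}$, $\delta_{\overline \gamma}$ acting on index-pieces that are in turn ``attached'' to generators via the tensor-index dictionary. Meanwhile the four pure-tensor contributions are reshuffled using \eqref{F--F} and \eqref{F--F+3} and then cancel against the leftover index-action terms. The upshot is a decomposition
\[
\{a_\alpha, b_\beta, c_\gamma\}_v^+ \;=\; \delta_{\varphi_1}(a_\alpha)\, F_1(b_\beta,c_\gamma) \,+\, \delta_{\varphi_2}(b_\beta)\, F_2(a_\alpha,c_\gamma) \,+\, \delta_{\varphi_3}(c_\gamma)\, F_3(a_\alpha,b_\beta),
\]
where $\varphi_i \in M^*$ and $F_i \in A^+_M$ depend on $a,b,c$ and on $\alpha,\beta,\gamma$ but $F_i$ does not depend on the $i$-th argument.

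Finally, I extend this formula from generators to arbitrary elements. Each map $x \mapsto \delta_{\varphi}(x)\cdot F$ is a derivation of $A^+_M$ (using commutativity of $A^+_M$ and the derivation property of $\delta_\varphi$), and the Jacobi form $J = \{-,-,-\}_v^+$ is itself a triderivation by \eqref{jjja} (applied in each slot). Hence the generator-level identity propagates slot by slot: for any $x,y,z \in A^+_M$,
\[
J(x,y,z) \;=\; \delta_{\varphi_1}(x)\, F_1(y,z) \,+\, \delta_{\varphi_2}(y)\, F_2(x,z) \,+\, \delta_{\varphi_3}(z)\, F_3(x,y)
\]
(where the $\varphi_i, F_i$ are now assembled from the generator-level data via the Leibniz rule). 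For $x,y,z \in E$ all three terms vanish because $\delta_{\varphi_i}$ annihilates $E$, giving $J(x,y,z)=0$ as required. Combined with Theorem~\ref{justxxxzLie} (which gives closure of $E$ under $\bracket{-}{-}_v^+$), this produces the Poisson algebra structure on $E$.

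The main obstacle is the reorganization step: the sixteen terms must be paired in a very specific way for the non-derivation residues to cancel, and this requires careful bookkeeping of the indices under the four cyclic identities \eqref{F--F}--\eqref{F--F+3}. The triderivation extension in the last step is essentially formal, and the closure statement is already available from Theorem~\ref{justxxxzLie}.
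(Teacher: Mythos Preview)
Your overall strategy matches the paper's: apply Lemma~\ref{bra} to write the Jacobi form on generators as $Q-R$, substitute the quasi-Poisson tribracket \eqref{compE}, use the normalization $(1_A)_\mu=\varepsilon(\mu)e$ together with the identity $\varepsilon(\alpha_\xi)\xi^\alpha-\varepsilon(\xi^\alpha)\alpha_\xi=-\delta_{\overline\alpha}(\xi)$ (equivalently \eqref{fghje}) to express everything via inner coderivations, and conclude vanishing on $E$. So the plan is essentially correct, but two details of your execution are off compared with the paper.

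First, the reorganization does not actually produce a three-summand form $\delta_{\varphi_1}(a_\alpha)F_1+\delta_{\varphi_2}(b_\beta)F_2+\delta_{\varphi_3}(c_\gamma)F_3$. The paper keeps $c_\gamma$ fixed and pairs the eight terms of $Q$ (and of $R$) directly---the ``pure'' terms $c\otimes a\otimes b$ and $-a\otimes b\otimes c$ pair with $1_A$-terms, not with each other, and none of the cyclic identities \eqref{F--F}--\eqref{F--F+3} are used. The outcome is
\[
Q-R=\sum \pm\,\delta_{\overline{\gamma^{i}}}(a_\alpha)\,\delta_{\overline{\gamma^{j}}}(b_\beta)\,c_{\gamma^{k}},
\]
i.e.\ \emph{every} term carries a factor $\delta_\bullet(a_\alpha)$ (and in fact also $\delta_\bullet(b_\beta)$). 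This gives the stronger conclusion $\{E,A^+_M,A^+_M\}=0$, not merely $\{E,E,E\}=0$.

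Second, your extension step is shaky as written: since your $\varphi_i$ depend on the other arguments, ``propagating slot by slot'' is not well-defined. The paper avoids this by extending only in the first two slots while $c_\gamma$ remains a generator (both sides are honest derivations in $x$ and in $y$ because the $\delta$'s sit on those factors), obtaining $\{x,y,c_\gamma\}=\sum\pm\delta_\bullet(x)\delta_\bullet(y)c_{\gamma^{k}}$; for $x\in E$ this is zero, and then the derivation property in the third slot gives $\{E,A^+_M,A^+_M\}=0$.
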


\begin{proof} Let $\{-,-,-\}:(A^+_M)^{\otimes 3}\to A^+_M$ be the Jacobi form of    $\bracket{-}{-}_v^+$. We must prove that   $ \{{E}, {E} , {E} \}=0$. We prove a stronger claim  $ \{{E}, A^+_M , A^+_M\}=0$.

  For     $a,b,c\in A$ and   $\alpha, \beta,\gamma\in M$, Lemma~\ref{bra} yields   $ \{a_{\alpha}, b_{\beta} , c_{\gamma}\}=Q-R$ with  $Q$ and $R$ given by \eqref{IMP1}, \eqref{IMP2}. Note that since $A$ is non-graded, the signs in the expressions for $Q$ and $R$ are equal to $+1$. 
 The subindices appearing in $Q$ form a vector in $ M\otimes M\otimes M$ equal to
 \begin{eqnarray*}  \alpha_{(\beta_\gamma)}  \otimes (\beta_\gamma)^\alpha \otimes \gamma^\beta  & {=}& (\widehat v \otimes \id_M) (\id_M\otimes \widehat v)(\alpha\otimes \beta \otimes \gamma)  \\
&=& v(\alpha \otimes \gamma^2) \, v(\beta \otimes \gamma^4)\, \gamma^1 \otimes \gamma^3 \otimes \gamma^5\\
&=& \overline \alpha (\gamma^2) \, \overline \beta ( \gamma^4)\, \gamma^1 \otimes \gamma^3 \otimes \gamma^5  .
 \end{eqnarray*}
Now, each   term  on the right-hand side of  \eqref{compE} contributes a summand to $Q$. The resulting eight summands of $Q$ are  equal to
  \begin{eqnarray*}  x_1   & {=}&    \overline \alpha (\gamma^2) \, \overline \beta ( \gamma^4)\, c_{\gamma^1} a_{\gamma^3} b_{\gamma^5} =
        a_{\overline \alpha (\gamma^2) \gamma^3} b_{\overline \beta ( \gamma^4) \gamma^5} c_{\gamma^1},\\
x_2 & {=}&  \overline \alpha ( \gamma^1) \,\overline \beta ( \gamma^4)\, a_{\gamma^2} b_{\gamma^3} c_{\gamma^5}
=    a_{\overline \alpha ( \gamma^1) \gamma^2} b_{\overline \beta ( \gamma^4) \gamma^3} c_{\gamma^5}, \\
 x_3 & {=}&  \overline \alpha ( \gamma^2) \, \overline \beta ( \gamma^3)\, a_{\gamma^1} b_{\gamma^4} c_{\gamma^5}
 =   a_{\overline \alpha ( \gamma^2) \gamma^1} b_{\overline \beta ( \gamma^3) \gamma^4} c_{\gamma^5},  \\
 x_4  & {=}&  \overline \alpha ( \gamma^3) \, \overline \beta ( \gamma^5)\, c_{\gamma^1} a_{\gamma^2} b_{\gamma^4}
 =     a_{\overline \alpha ( \gamma^3) \gamma^2} b_{\overline \beta ( \gamma^5) \gamma^4} c_{\gamma^1},  \\
   x_5 & {=}&  -  \overline \alpha (\gamma^2) \, \overline \beta ( \gamma^4)\, a_{\gamma^1} b_{\gamma^3} c_{\gamma^5}
   = -    a_{\overline \alpha (\gamma^2)\gamma^1} b_{\overline \beta ( \gamma^4) \gamma^3} c_{\gamma^5},  \\
   x_6  & {=}&  -  \overline \alpha ( \gamma^1) \,\overline \beta ( \gamma^3)\, a_{\gamma^2} b_{\gamma^4} c_{\gamma^5}
   = -     a_{\overline \alpha ( \gamma^1) \gamma^2} b_{\overline \beta ( \gamma^3) \gamma^4} c_{\gamma^5},  \\
    x_7  & {=}&  -  \overline \alpha ( \gamma^3) \, \overline \beta ( \gamma^4)\, c_{\gamma^1} a_{\gamma^2} b_{\gamma^5}
    = -     a_{\overline \alpha ( \gamma^3) \gamma^2} b_{\overline \beta ( \gamma^4) \gamma^5} c_{\gamma^1},  \\
     x_8  & {=}&
        -\overline \alpha ( \gamma^2) \, \overline \beta ( \gamma^5)\, c_{\gamma^1} a_{\gamma^3} b_{\gamma^4}
        =  - a_{\overline \alpha ( \gamma^2) \gamma^3} b_{\overline \beta ( \gamma^5) \gamma^4}  c_{\gamma^1}.
 \end{eqnarray*}
 Here, in the computation of $x_2, x_3$, etc., we use the relation $(1_A)_\alpha= \varepsilon(\alpha) 1_{A^+_M}$.

 Using \eqref{fghje} we obtain that
    \begin{eqnarray*} x_1+x_7  & {=}&  a_{\overline \alpha ( \gamma^2) \gamma^3-\overline \alpha (\gamma^3) \gamma^2 } b_{\overline \beta ( \gamma^4) \gamma^5} c_{\gamma^1}  \\
&=&  a_{-\delta_{\overline \alpha}   (\gamma^2)  } b_{\overline \beta ( \gamma^3) \gamma^4} c_{\gamma^1} \\
&=&  a_{ \delta_{\overline {\gamma^2}}   (\alpha)  } b_{\overline \beta ( \gamma^3) \gamma^4} c_{\gamma^1}= \delta_{\overline {\gamma^2}} (a_\alpha) b_{\overline \beta ( \gamma^3) \gamma^4} c_{\gamma^1}.
 \end{eqnarray*}
 Similarly, $$x_2+x_5= \delta_{\overline {\gamma^1}} (a_\alpha) b_{\overline \beta ( \gamma^3) \gamma^2} c_{\gamma^4},$$
 $$x_3+x_6= -\delta_{ \overline {\gamma^1}} (a_\alpha) b_{\overline \beta ( \gamma^2) \gamma^3} c_{\gamma^4},$$
 $$x_4+x_8= -\delta_{ \overline {\gamma^2}} (a_\alpha) b_{\overline \beta ( \gamma^4) \gamma^3} c_{\gamma^1}.$$
 Analogous computations  allow us to deduce further  that
 $$ x_1+x_7 + x_4+x_8=  \delta_{\overline {\gamma^2}} (a_\alpha)  \delta_{\overline {\gamma^3}} (b_\beta  ) c_{\gamma^1},$$
 $$x_2+x_5+x_3+x_6= -\delta_{\overline {\gamma^1}} (a_\alpha)  \delta_{\overline {\gamma^2}} (b_\beta  ) c_{\gamma^3}.$$
Summing up, we obtain that \begin{equation}\label{newQ} Q=Q(a,b,c, \alpha, \beta, \gamma)= \delta_{\overline {\gamma^2}} (a_\alpha)  \delta_{\overline {\gamma^3}} (b_\beta  ) c_{\gamma^1}  -\delta_{\overline {\gamma^1}} (a_\alpha)  \delta_{\overline {\gamma^2}} (b_\beta  ) c_{\gamma^3}. \end{equation}
Similarly, starting from the expression
 \begin{eqnarray*}  \alpha_{(\gamma^\beta)}  \otimes (\gamma^\beta)^\alpha \otimes \beta_\gamma  & {=}& (\widehat v \otimes \id_M) p_{23} (\id_M\otimes \widehat v)(\alpha\otimes \beta \otimes \gamma)  \\
&=& v(\alpha \otimes \gamma^4) \, v(\beta \otimes \gamma^2)\, \gamma^3 \otimes \gamma^5 \otimes \gamma^1\\
&=& \overline \alpha (\gamma^4) \, \overline \beta ( \gamma^2)\,  \gamma^3 \otimes \gamma^5 \otimes \gamma^1
 \end{eqnarray*}
 we obtain that
  \begin{equation}\label{newR} R=R(a,b,c, \alpha, \beta, \gamma)=  \delta_{\overline {\gamma^3}} (a_\alpha)  \delta_{\overline {\gamma^2}} (b_\beta  ) c_{\gamma^1}  -\delta_{\overline {\gamma^2}} (a_\alpha)  \delta_{\overline {\gamma^1}} (b_\beta  ) c_{\gamma^3}. \end{equation}

 These expansions  for $Q$ and $R$ yield an expansion of  $ \{a_{\alpha}, b_{\beta} , c_{\gamma}\}=Q-R$. Now,
using the fact  that the Jacobi form is a derivation in the first and second variables (cf. \eqref{jjja}) we deduce that for any $x,y\in A^+_M$,
$$ \{x, y , c_{\gamma}\}= \big  ( \delta_{\overline {\gamma^2}} (x)  \delta_{\overline {\gamma^3}} (y  )   -\delta_{\overline {\gamma^3}} (x)  \delta_{\overline {\gamma^2}} (y  ) \big)  c_{\gamma^1} + \big ( \delta_{\overline {\gamma^2}} (x)  \delta_{\overline {\gamma^1}} (y  ) -\delta_{\overline {\gamma^1}} (x)  \delta_{\overline {\gamma^2}} (y  ) \big ) c_{\gamma^3}  .$$
If $x\in {E} $, then     the inner derivations annihilate $x$, and so  $ \{x, A^+_M , c_{\gamma}\}=0$ for   all
the generators $c_\gamma$ of $A^+_M$. Since the Jacobi form is a derivation in the third variable, we deduce that
$ \{x, A^+_M , A^+_M\}=0$. Thus, $ \{{E}, A^+_M , A^+_M\}=0$.
\end{proof}

\begin{corol}\label{quasi+} If,  under the assumptions of Theorem~\ref{mommap-},  $n=0$,  $A$ is concentrated in   degree zero,  and the double bracket in $A$ is quasi-Poisson, then the induced  bracket in $p_M^+(E)$   is a Poisson bracket.   \end{corol}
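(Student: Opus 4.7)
The plan is to chain Theorem~\ref{quasi} with Theorem~\ref{mommap-}. Theorem~\ref{mommap-}, applied to the data at hand (and ignoring its Gerstenhaber clause, which does not cover the quasi-Poisson case), already supplies a well-defined biderivation $\bracket{-}{-}$ on $p^+_M(E) = E/(E\cap J)$, where $J = \ker p^+_M$, obtained by descending $\bracket{-}{-}_v^+$ from $E$. Since $n=0$ and $A$ is concentrated in degree zero, antisymmetry of this bracket on the commutative algebra $p^+_M(E)$ is automatic, so the only Poisson axiom left to verify is the Jacobi identity.

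Theorem~\ref{quasi} applies directly to $(A,\double{-}{-})$ and the cyclic form $v$ on the counital coalgebra $M$: it asserts that the restriction of $\bracket{-}{-}^+_v$ to $E$ satisfies the Jacobi identity, making $E$ into a Poisson algebra. To propagate this to $p^+_M(E)$, I need $E \cap J$ to be a Poisson ideal of $E$. But this is exactly the inclusion $\bracket{E\cap J}{E}^+_v \subset E\cap J$, which is established in the proof of Theorem~\ref{mommap-} via the Leibniz rule in the first variable together with antisymmetry. Consequently the Jacobi form on $E$ descends to the zero Jacobi form on $E/(E\cap J) = p^+_M(E)$, which is therefore a Poisson algebra.

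No substantive obstacle is expected: the heavy lifting is done in Theorems~\ref{quasi} and~\ref{mommap-}, and the corollary is the elementary observation that a quotient of a Poisson algebra by a Poisson ideal is again Poisson. The only point to double-check is that the hypotheses of Theorem~\ref{quasi} are in force, which they are: $A$ is unital (needed to form $A^+_M$ and to state the double quasi-Poisson condition), $M$ is counital, $v$ is cyclic on $M$, and $\double{-}{-}$ is double quasi-Poisson by assumption. Thus the proof reduces to a short two-step deduction.
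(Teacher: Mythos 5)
Your proof is correct and is essentially the paper's own (implicit) argument: the corollary is obtained by combining the descent machinery of Theorem~\ref{mommap-} (whose existence part, as you note, does not use the Gerstenhaber hypothesis) with the Jacobi identity on $E$ from Theorem~\ref{quasi}, via the observation that $E\cap J$ is a Poisson ideal of $E$. One small correction of justification: antisymmetry of the descended bracket is not ``automatic'' from $n=0$ and commutativity of $p^+_M(E)$; it holds because Theorem~\ref{mommap-} produces an $n$-graded biderivation, and antisymmetry \eqref{antis} is part of that definition.
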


 Theorem \ref{quasi} and Corollary~\ref{quasi+} are due to   Van den Bergh  \cite{VdB} for $M $    and $v$ as in Example \ref{exam}.4.
 
 \subsection{Group algebras}\label{Group algebras} We discuss in more detail the case where $A=\kk[\pi]$ is the (non-graded)  algebra  of a group~$\pi$. 
 Consider a counital coalgebra $M$ whose underlying module is free of finite rank and consider  the dual algebra $M^*$.  For any  (non-graded) algebra $B$, we have $H_M(B)=B\otimes M^*$. If $B$ is unital, then so is the    algebra  $ B \otimes M^*$, and we can identify the  set $\Hom_{\grAlg^+} (A,  H_M(B))$   with the set   of group homomorphisms  from  $\pi  $ to    the   group  $ U(B\otimes M^*)$  of  invertible elements of    $ B \otimes M^*$.   The bijection \eqref{uncogral} exhibits   $A^+_M$ as the coordinate algebra of the functor $B\mapsto  \Hom (\pi, U(B\otimes M^*))$ where $B$ runs over unital commutative (non-graded) algebras. For example, if  $M=(\Mat_N({{{{\mathcal A}}}}))^*$ where $N\geq 1$ and ${{{{\mathcal A}}}}$ is a symmetric Frobenius algebra, then $U(B\otimes M^*)= GL_N(B \otimes {{{{\mathcal A}}}})$  is the group of  invertible elements of the matrix algebra $B\otimes \Mat_N({{{{\mathcal A}}}})=\Mat_N(B \otimes {{{{\mathcal A}}}})$. If  $M=(\kk[G])^*$ where $G$ is a finite group, then $U(B\otimes M^*) $  is the group of  invertible elements of the group algebra $B\otimes \kk[G] =B[G]$.
 
  A   double   bracket $\double{-}{-}$  in $A$ and a  cyclic bilinear form $v$ on $M$ induce    a (zero-graded) biderivation $\bracket{-}{-}_v^+$ in $A^+_M$.  If $\double{-}{-}$ is  Poisson, then so is $\bracket{-}{-}_v^+$. If $\double{-}{-}$ is   quasi-Poisson, then the restriction of $\bracket{-}{-}_v^+$ to  the  algebra $E \subset A^{+}_M$ defined by \eqref{Afix+++++}  is  a Poisson bracket.

 These constructions apply   to the fundamental group  $\pi=\pi_1(\Sigma, \star)$  of an oriented surface $\Sigma$ with   boundary and with base point  $\star \in \partial \Sigma$. The algebra $A=\kk[\pi]$ carries a canonical double quasi-Poisson bracket, see \cite{MT2}. This induces   a   biderivation $\bracket{-}{-}_v^+$ in $A^+_M$ which  restricts to   a Poisson bracket in $E \subset A^{+}_M$.
 When $M=(\Mat_N(\RR))^*$ and   $\Sigma$ is compact, one can identify $A^{+}_M$ and $E$ with the algebras of regular functions on   $\Hom(\pi, GL_N(\RR)) $ and    $\Hom(\pi, GL_N(\RR))/GL_N(\RR)$, respectively. In this case, the bracket $\bracket{-}{-}_v^+$ in $E$ is well-known; its extension to $A^{+}_M$ was first constructed   in \cite{AKsM}  up to isomorphism.
 For more on this, see   \cite{MT2}.
 
 The Hamiltonian reduction has the following interpretation in the context of surfaces. Assume that   $\partial \Sigma \cong S^1$. Attaching a 2-disk to $\Sigma$, we obtain a  surface $\Sigma'$. The inclusion $\Sigma \hookrightarrow \Sigma'$ induces an   epimorphism $A=\kk[\pi] \to B=\kk[\pi_1(\Sigma', \star)]$ with kernel   $A(\eta-1)A$ where $\eta\in \pi$ is represented by   $\partial \Sigma$. The element  $\eta$ is a multiplicative moment map.  Corollary~\ref{quasi+} implies that the Poisson bracket $\bracket{-}{-}_v^+$ in $E$ descends to a Poisson bracket in   $p_M^+(E) \subset B^+_M$ whih is again well-known when $M=(\Mat_N(\RR))^*$ and   $\Sigma$ is compact.

\section{Equivariant representation algebras and brackets}

We introduce   equivariant representation algebras and construct  brackets in them.

\subsection{Algebras over enriched groups} An {\it enriched group} $G$ is a group endowed with a distinguished subgroup $G_0$ of index 1 or 2. 
 Given an enriched group $G$, we introduce graded $G$-algebras and $G$-coalgebras as follows.  A {\it graded $G$-algebra}  is a graded algebra  $A$  equipped
 with a  left  action of $G$   such that the elements of $G_0$ act by graded algebra automorphisms of $A$ 
 and the elements of $G\backslash G_0$ act by graded algebra antiautomorphisms of $A$. Here   a {\it graded algebra antiautomorphism} of   $A$ is a grading-preserving linear isomorphism $f:A\to A$ such that $f(ab)= (-1)^{\vert a\vert \vert b\vert} f(  b) f( a)$
 for all homogeneous $a,b\in A$.
 

A   map  between graded $G$-algebras  is {\it equivariant} if it commutes with the action of $G$. Graded $G$-algebras and equivariant graded algebra homomorphisms form a category  denoted $G-\grAlg$.

A {\it $G$-coalgebra}  is a   coalgebra $M$ with comultiplication ${{\mu}}$ 
 equipped
 with a  left  action of $G$   such that the elements of $G_0$ act by coalgebra automorphisms of $M$ 
 and the elements of $G\backslash G_0$ act by coalgebra antiautomorphisms  of $M$. Here   an {\it  antiautomorphism} of   $M$ is a linear isomorphism $h:M\to M$ such that $(h \otimes h) {{\mu}} =p_{21} {{\mu}} h$  where    $p_{21}:M\otimes M\to M\otimes M$ is the permutation of the tensor factors.

 Given a $G$-coalgebra $M$ and a  graded algebra $B$, we  provide the convolution algebra $H_M(B)=\Hom_\kk(M,B)$ with the   left action of $G$ by $$gf= f\circ (g^{-1}:M\to M)$$  for   $g\in G$, $f\in H_M(B)$.
 All $g\in G_0 \subset G$ act on $H_M(B)$ by algebra automorphisms:  
 $$g(f_1 f_2)   = (f_1 f_2) \circ  g^{-1} = {m_B} (f_1 \otimes f_2) {{\mu}}  \circ  g^{-1}
   = {m_B} (f_1 \otimes f_2) \circ (g^{-1} \otimes g^{-1}) {{\mu}}  $$
   $$ = {m_B} ((f_1\circ  g^{-1}) \otimes (f_2 \circ g^{-1}) ) {{\mu}}   
 = {m_B} (g f_1   \otimes g f_2  ) {{\mu}}   = (g f_1) ( g f_2  )  $$ 
 where $f_1, f_2\in H_M(B)$ and  ${m_B}:B\otimes B\to B$ is the multiplication in $B$. If    $B$ is   commutative, then all $g\in G\backslash G_0  $ act on $H_M(B)$ by algebra antiautomorphisms:   for  homogeneous $f_1, f_2\in H_M(B)$,
 $$g(f_1 f_2)    = {m_B} (f_1 \otimes f_2) {{\mu}}  \circ  g^{-1}
   = {m_B} (f_1 \otimes f_2) \circ p_{21} (g^{-1} \otimes g^{-1})   {{\mu}}  $$
   $$  = (-1)^{\vert f_1\vert \vert f_2 \vert} \, {m_B} (f_2 \otimes f_1) \circ   (g^{-1} \otimes g^{-1})   {{\mu}}    = (-1)^{\vert f_1\vert \vert f_2 \vert} (g f_2) ( g f_1  ) . $$
 Thus,  in this case,   $H_M(B)$ is a graded $G$-algebra.

\subsection{Equivariant representation algebras}\label{AMT0}
Fix an enriched group $G$, a graded $G$-algebra $A$, and a $G$-coalgebra $M$. Let      ${  A}_M^G$ be the commutative graded algebra obtained as the quotient of $ A_M$ by   the    relations
$\{ (ga )_{g\alpha }=a_{ \alpha }  \, \vert \, 
 a \in A, \alpha \in {M}\}$. Note the   equivalent system of relations 
   $\{ (ga)_{\alpha } =   a_{g^{-1}\alpha }  \, \vert \,
 a \in A, \alpha \in {M}\}$.
  
 We call $A^G_M$ the {\it equivariant representation algebra of $A$  with respect to $M$.}  
  The construction of $A_M^G$ is functorial: a  morphism $f:A\to B$ in the category $G-\grAlg$ induces a
 graded  algebra homomorphism $ f_M^G: A_M^G \to B_M^G$ by
$  f_M^G (a_{\alpha})  = (f(a))_{\alpha}$ for all $a\in A$, $\alpha \in {M}$.

\begin{lemma}\label{justequi}  
For any commutative graded algebra $B$, there is a canonical bijection
\begin{equation}\label{eq:adjunctionequi}
\Hom_{ \grCom } (   A^G_{M} , B)
\stackrel{\simeq}{\longrightarrow} \Hom_{G-\grAlg} (A,  H_M(B))
\end{equation}
which is natural in $A$ and $B$.
\end{lemma}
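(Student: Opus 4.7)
The plan is to deduce this from the non-equivariant universal property of $A_M$ established in Section~\ref{commAN}, by identifying which commutative graded algebra homomorphisms $A_M \to B$ descend to the quotient $A_M^G$.

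First, I would recall the natural bijection $\Phi: \Hom_{\grCom}(A_M, B) \xrightarrow{\simeq} \Hom_{\grAlg}(A, H_M(B))$ of Section~\ref{commAN}, sending $r$ to the map $s_r$ defined by $s_r(a)(\alpha) = r(a_\alpha)$. By the very definition of $A_M^G$ as the quotient of $A_M$ by the ideal generated by $\{(ga)_{g\alpha} - a_\alpha \mid a \in A,\, \alpha \in M,\, g \in G\}$, the set $\Hom_{\grCom}(A_M^G, B)$ corresponds canonically to the subset of $\Hom_{\grCom}(A_M, B)$ consisting of those $r$ that annihilate every such difference, i.e., satisfy $r((ga)_{g\alpha}) = r(a_\alpha)$ for all $a, \alpha, g$.

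Next, I would translate this annihilation condition across $\Phi$. Written in terms of $s_r$, the equality $r((ga)_{g\alpha}) = r(a_\alpha)$ reads $s_r(ga)(g\alpha) = s_r(a)(\alpha)$; after the substitution $\alpha \mapsto g^{-1}\alpha$ it becomes $s_r(ga)(\alpha) = s_r(a)(g^{-1}\alpha) = (g \cdot s_r(a))(\alpha)$, where the $G$-action on $H_M(B)$ is the one introduced just before the lemma. Thus the annihilation condition holding for every $a, \alpha, g$ is precisely the $G$-equivariance $s_r(ga) = g \cdot s_r(a)$ of the graded algebra homomorphism $s_r: A \to H_M(B)$, i.e., the condition that $s_r$ lie in $\Hom_{G\text{-}\grAlg}(A, H_M(B))$.

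Combining these two identifications yields the bijection \eqref{eq:adjunctionequi}, with naturality in $A$ and $B$ inherited from $\Phi$. The one point requiring care, already dispatched in the discussion preceding the lemma, is that $H_M(B)$ really is a graded $G$-algebra in the paper's sense: the action of $G \setminus G_0$ on $H_M(B)$ is by graded antiautomorphisms precisely because $B$ is commutative, which is what allows $p_{21}$ to pass through $m_B$ with the correct Koszul sign. I do not anticipate a serious obstacle; the only bookkeeping point is keeping track of the inversion $\alpha \mapsto g^{-1}\alpha$ that intertwines the $G$-actions on source and target, and verifying that when $g \in G \setminus G_0$ the (anti)automorphism structures on both sides are consistent with $s_r$ being grading-preserving, which is automatic.
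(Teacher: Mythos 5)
Your proposal is correct and takes essentially the same route as the paper: both reduce to the bijection of Lemma~\ref{just} (in its commutative form from Section~\ref{commAN}) and observe that, under $r\mapsto s_r$, the defining relations $(ga)_{g\alpha}=a_\alpha$ of $A^G_M$ translate precisely into the equivariance $s_r(ga)=g\,s_r(a)$ for the action $gf=f\circ g^{-1}$ on $H_M(B)$. The paper merely packages this as a direct construction of mutually inverse maps (checking the relation $r((ga)_{g\alpha})=s(ga)(g\alpha)=s(a)(\alpha)=r(a_\alpha)$ for equivariant $s$) rather than as a restriction of the non-equivariant bijection, so the two arguments coincide in content.
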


\begin{proof}
The map \eqref{eq:adjunctionequi}  carries
a   graded algebra homomorphism $r:  A_{M}^G\to B$    to the
linear  map   $ s=s_r: A \to H_M(B)$  defined by
  $s(a)(\alpha)= r( a_{\alpha }) $ for all $a\in A$ and $\alpha\in M$. The    proof of Lemma~\ref{just} shows that 
    $s$ is   a graded algebra homomorphism. It is equivariant: $s(ga)= g (s(a))$ for any $g\in G$ and $a\in A$. Indeed, for all  $\alpha\in M$,
    $$ s(ga) (\alpha) = r( (ga)_{\alpha })= r( a_{g^{-1}\alpha })= s(a) (g^{-1} \alpha)=  g (s(a)) (\alpha).$$

The map inverse to \eqref{eq:adjunctionequi}  carries
     an equivariant graded algebra homomorphism  $s:A \to H_M(B)$ to the
algebra homomorphism   $ r=r_s:  A_{M}^G\to B$ defined on the   generators    by $r( a_{\alpha })=s(a)(\alpha)$.       This rule  is compatible with   the  defining relations in $  A_M$, see the  proof of Lemma~\ref{just}, and with the  relations $(ga )_{g\alpha }=a_{ \alpha } $:
$$r(  (ga)_{g\alpha})  = s(ga) (g\alpha)= (g s(a)) (g\alpha)= s(a)( g^{-1}g\alpha)=s(a)(\alpha) =r( a_{\alpha }).$$
 Clearly,    the maps $s\mapsto r_s$ and $r\mapsto s_r$ are mutually inverse.
 \end{proof}

  Lemma~\ref{just} yields the following universal  property of $  A_M^G$: for any commutative graded  algebra $B$ and any equivariant graded  algebra   homomorphism  $s:A\to H_M(B)$, there is a unique graded  algebra  homomorphism  $r:   A_M^G\to B$ such that the  following diagram
 commutes:  $$
\xymatrix{ A  \ar[r]   \ar[rd]_{s} & H_M(   A_M^G)\ar[d]^{H_M(r)}  \\
&   H_M(B).
}
$$   Here the horizontal arrow is the equivariant graded  algebra   homomorphism carrying  $a\in A$ to the   map $M\to    A_M^G, \alpha\mapsto a_\alpha$.


\subsection{A bracket  in $A^G_M$}  Let $G, A, M$ be as in Section \ref{AMT0}. Under certain assumptions on a double bracket $\double{-}{-}$  in $A$ and a cyclic bilinear form $v$ on $M$, we   derive from them a bracket in $A^G_M$. We say that   $\double{-}{-}$    is   {\it equivariant} if
for all $g\in G$, $a,b \in A$, 
 $$
\double{ga}{gb } =
\left\{\begin{array}{ll} (g \otimes g)   (\double{a}{b})  & \hbox{if } g\in G_0, \\ P_{21} (g \otimes g)   (\double{a}{b}) & \hbox{if } g \in G\backslash G_0    \end{array}\right.
$$
where $P_{21}$ is the graded permutation in $A\otimes A$ defined in Section \ref{bibr}.
We say that $v$   is   {\it equivariant} if $v(g\alpha \otimes g\beta )=v(\alpha \otimes \beta)$ for all $g\in G$, $\alpha, \beta \in  M $.

 \begin{lemma}\label{brainv}
    Let        $  \bracket{-}{-}_v   $ be the $n$-graded biderivation in $A_M$   derived from  an equivariant $n$-graded  double bracket $\double{-}{-}$  in $A$ and  an equivariant cyclic  bilinear form $v$ on $M$. Let $q:A_M\to A^G_M$ be the natural projection. If $G$ is finite, then there is a unique $n$-graded biderivation    $ \bracket{-}{-}^G_v   $ in $  A_M^G$  such that for any
  $a,b\in   A $, $\alpha, \beta \in M$,
\begin{equation}\label{mainfinv}\bracket{q(a_\alpha)}{q(b_\beta)}^G_v= \sum_{g\in G}  \, q(\{(ga)_{g\alpha} , b_\beta\}_v) . \end{equation}
 If $\double{-}{-}$ is a double Gerstenhaber bracket, then  $ \bracket{-}{-}^G_v   $ is a Gerstenhaber bracket.
\end{lemma}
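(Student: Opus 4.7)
The uniqueness of $\bracket{-}{-}^G_v$ is immediate: the elements $q(a_\alpha)$ generate $A^G_M$ as a graded algebra, and any biderivation is determined by its values on a generating set via the Leibniz rules. The natural strategy for existence is to lift the construction to $A_M$: define a $\kk$-bilinear pairing
\begin{equation*}
\ll -, - \gg \colon A_M \otimes A_M \to A_M, \qquad \ll x, y \gg \ = \ \sum_{g \in G} \bracket{gx}{y}_v,
\end{equation*}
where the $G$-action on $A_M$ is the one induced by the given $G$-actions on $A$ and $M$ (well-defined since $A_M$ is commutative, so graded antiautomorphisms of $A$ and coalgebra antiautomorphisms of $M$ give ordinary algebra automorphisms of $A_M$). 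The plan is to show that $q \circ \ll -, - \gg$ factors through $A^G_M \otimes A^G_M$ and defines the required biderivation $\bracket{-}{-}^G_v$, after which specialization to generators recovers \eqref{mainfinv}.

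Invariance of $q(\ll x, y\gg)$ in the first variable under $x \mapsto hx$ is immediate by the substitution $g \mapsto gh^{-1}$ in the summation. Invariance in the second variable under $y \mapsto hy$ is the core technical point: it reduces to a $G$-equivariance property of $\bracket{-}{-}_v$ on $A_M$, namely
\begin{equation*}
\bracket{hx}{hy}_v = h\bracket{x}{y}_v \quad (h \in G_0), \qquad \bracket{hx}{hy}_v = h\bigl(P_{21} \bracket{x}{y}_v\bigr) \quad (h \in G \setminus G_0),
\end{equation*}
in an appropriate (graded) sense. This follows directly from the defining formula \eqref{mainf} together with the assumed equivariance of $\double{-}{-}$ and of $v$: the equivariance of $v$ implies that $\widehat v$ intertwines the $G$-action on $M \otimes M$, so the subindices $\alpha_\beta, \beta^\alpha$ transform the same way as the arguments, and the equivariance of $\double{-}{-}$ moves the $h$ outside. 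Combined with the fact that $q \circ h = q$, this yields $q(\ll x, hy\gg) = q(\ll x, y\gg)$. The Leibniz rules and antisymmetry for $\bracket{-}{-}^G_v$ are then inherited from those of $\bracket{-}{-}_v$ by direct application of $q$ and the sum; and evaluation of $q(\ll a_\alpha, b_\beta\gg)$ produces precisely \eqref{mainfinv}.

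For the Gerstenhaber property, suppose $\double{-}{-}$ is a double Gerstenhaber bracket, so that $\bracket{-}{-}_v$ is an $n$-graded Gerstenhaber bracket on $A_M$ by Theorem~\ref{th-main}; we must show that its Jacobi form $\{-,-,-\}_v$ on $A_M$ descends to a vanishing Jacobi form for $\bracket{-}{-}^G_v$. Expanding $\{q(x),q(y),q(z)\}^G_v$ by successive application of the defining formula yields a double sum
\begin{equation*}
\{q(x),q(y),q(z)\}^G_v = \sum_{g,h \in G} q\bigl[(-1)^{|x|_n|z|_n}\bracket{gx}{\bracket{hy}{z}_v}_v \ + \ \text{two cyclic terms}\bigr].
\end{equation*}
Using the equivariance of $\bracket{-}{-}_v$ established above together with $q \circ g = q$, the two cyclic terms can be re-indexed (via substitutions in the summation variables) so that the three summands together match the sum over $g,h$ of the full Jacobi expression $\{gx, hy, z\}_v$. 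This latter vanishes identically by Theorem~\ref{th-main}, whence $\{-,-,-\}^G_v = 0$ on generators; extending via \eqref{jjja} gives vanishing on all of $A^G_M$.

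The main obstacle will be the careful bookkeeping of signs and of the permutation $P_{21}$ for elements $h \in G \setminus G_0$, since in that case the equivariance of $\bracket{-}{-}_v$ involves not just transporting $h$ but also flipping the two tensor factors of $\double{a}{b}$ (compensated by the graded commutativity of $A_M$). Verifying that these signs and permutations cancel correctly both in the well-definedness argument of the second paragraph and in the cyclic re-indexing of the Jacobi computation is the delicate point; however, the commutativity of $A_M^G$ together with the involutive nature of $P_{21}$ should make the signs collapse after applying $q$.
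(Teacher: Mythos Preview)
Your proposal is correct and follows the same strategy as the paper: construct the $G$-action on $A_M$, establish $G$-invariance of $\bracket{-}{-}_v$, average over $G$ to get a pairing that descends along $q$, and inherit the Leibniz rules and antisymmetry.

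One imprecision should be cleaned up. Your displayed formula $\bracket{hx}{hy}_v = h\bigl(P_{21}\bracket{x}{y}_v\bigr)$ for $h\in G\setminus G_0$ is not well-formed, since $\bracket{x}{y}_v$ lies in $A_M$, not in a tensor square. What actually happens (and what the paper verifies) is that for such $h$ \emph{both} the double bracket and $\widehat v$ pick up a flip---$P_{21}$ on $A\otimes A$ and $p_{21}$ on $M\otimes M$ respectively---and when these are fed into \eqref{mainf} the two flips cancel against the graded commutativity of $A_M$, giving the clean statement $\bracket{hx}{hy}_v = h\bracket{x}{y}_v$ for \emph{all} $h\in G$. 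You clearly anticipate this in your final paragraph, but the intermediate formula as written is misleading; there is no residual $P_{21}$ to track.

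For the Jacobi identity your route differs slightly from the paper's. You invoke Theorem~\ref{th-main} so that each $\{gx,hy,z\}_v$ already vanishes, then re-index the double sum. The paper instead goes through Lemma~\ref{bra}, expressing the Jacobi form of $\bracket{-}{-}^G_v$ on generators directly as a sum of the expressions $Q-R$ built from the tribracket $\triple{-}{-}{-}$. Your argument is marginally slicker for the present lemma; the paper's has the advantage of producing an explicit formula for the Jacobiator valid even when $\triple{-}{-}{-}\neq 0$, which is precisely what is needed later for the equivariant quasi-Poisson statement (Theorem~\ref{quasiE}).
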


\begin{proof} The uniqueness of the bracket $ \bracket{-}{-}^G_v$ is obvious, and we need only to prove the existence.  We begin with   a few computations in the  $G$-coalgebra $M$. First of all,   for  any  $g\in G_0$ and   $\beta\in M$,
$$ (g\beta)^1 \otimes (g\beta)^2 \otimes (g\beta)^3= {{\mu}}^2(g\beta)= g(\beta^1) \otimes g(\beta^2) \otimes g(\beta^3) $$ 
Using the   equivariance of $v$, we deduce   that  for any $\alpha, \beta \in M$,
 $$\widehat v (g\alpha \otimes g\beta) =v(g\alpha \otimes (g\beta)^2) (g\beta)^1 \otimes (g\beta)^3=
 v(g\alpha \otimes g(\beta^2))  g(\beta ^1) \otimes  g(\beta^3)$$
 $$= v( \alpha \otimes  \beta^2) (g \otimes g) (\beta^1  \otimes   \beta^3)=
(g \otimes g) \widehat v (\alpha \otimes \beta).$$
 We   rewrite this in the notation of Section~\ref{mapfv}: 
\begin{equation}\label{vvd} (g\alpha)_{(g\beta )} \otimes  (g \beta )^{g(\alpha)}= g(\alpha_\beta) \otimes g(\beta^\alpha).\end{equation}

Similarly, for     $g\in G\backslash G_0$ and $\alpha, \beta \in M$,  
$$ (g\beta)^1 \otimes (g\beta)^2 \otimes (g\beta)^3= {{\mu}}^2(g\beta)= g(\beta^3) \otimes g(\beta^2) \otimes g(\beta^1) $$ 
and therefore
$$\widehat v (g\alpha \otimes g\beta) =v(g\alpha \otimes (g\beta)^2) (g\beta)^1 \otimes (g\beta)^3=
 v(g\alpha \otimes g(\beta^2))  g(\beta^3) \otimes  g(\beta^1)$$
 $$= v( \alpha \otimes  \beta^2) p_{21} (g \otimes g) (\beta^1  \otimes   \beta^3)=
p_{21} (g \otimes g) \widehat v (\alpha \otimes \beta).$$
In the notation of Section~\ref{mapfv} this gives
\begin{equation}\label{vvd+} (g\alpha)_{(g\beta )} \otimes  (g \beta )^{g(\alpha)}=  g(\beta^\alpha) \otimes   g(\alpha_\beta).\end{equation}

Next, we define a left action of $G$ on $A_M$ by graded algebra automorphisms. 
Each  $g\in G$ acts on the generators by $g \, a_\alpha  = (ga)_{g\alpha}$ for   $a\in A$, $\alpha \in M$. We   check the compatibility with the defining relations of $A_M$. The compatibility  with the bilinearity relations is obvious. Consider  the multiplicativity relation  $(ab)_\alpha=a_{\alpha^1 } b_{\alpha^2}$   with homogeneous $a,b\in A$ and $\alpha  \in M$. If $g\in G_0$, then
$$g (ab)_\alpha= (g(ab))_{g\alpha}=(g(a) g(b))_{g\alpha}=(ga)_{g\alpha^1} (gb)_{g\alpha^2}= g(a_{\alpha^1}) g(b_{\alpha^2})= g(a_{\alpha^1}  b_{\alpha^2} ). $$
If $g\in G \backslash G_0$, then  
$$g (ab)_\alpha= (g(ab))_{g\alpha}= (-1)^{\vert a\vert \vert b\vert} (g(b) g(a))_{g\alpha}   = (-1)^{\vert a\vert \vert b\vert}  (gb)_{(g\alpha)^1} (ga)_{(g\alpha)^2}$$
$$=(ga)_{(g\alpha)^2} (gb)_{(g\alpha)^1} =(ga)_{g\alpha^1} (gb)_{g\alpha^2} =    g(a_{\alpha^1})\, g(  b_{\alpha^2} ) = g(a_{\alpha^1}   b_{\alpha^2} ).$$
The compatibility with the commutativity relation  $a_{\alpha } b_{\beta}=(-1)^{\vert a\vert \vert b\vert} b_{\beta } a_{\alpha}$: for all $g $,  $$ g(a_{\alpha } b_{\beta})=  g(a_{\alpha }) g( b_{\beta})=(ga)_{g\alpha } (gb)_{g\beta}= (-1)^{\vert a\vert \vert b\vert} (gb)_{g\beta } (ga)_{g\alpha}= (-1)^{\vert a\vert \vert b\vert} g(b_{\beta } a_{\alpha}).$$

The bracket $\bracket{-}{-}=\bracket{-}{-}_v  $ in $A_M$ is invariant under the action of $G$. Indeed, it is enough to check that $\bracket{g(a_\alpha)}{g(b_\beta)}=g\bracket{a_\alpha}{b_\beta}$ for any $g\in G$, 
  $a,b\in A$,  $\alpha, \beta  \in M$. We have
\begin{equation}\label{vvd++}\bracket{g(a_\alpha)}{g(b_\beta)}=\bracket{(ga)_{g\alpha} }{(gb)_{g\beta}} =\double{ga}{gb}'_{(g\alpha)_{(g\beta )}}  \double{ga}{gb}''_{ (g \beta )^{g\alpha}}.\end{equation}
Set  $x=x'\otimes x''=\double{a}{b}$.  If $g\in G_0$, then   the equivariance of $\double{-}{-}$ and \eqref{vvd} imply that  the right-hand side of \eqref{vvd++} is equal to 
 $(gx')_{g(\alpha_\beta ) }  (gx'')_{ g (\beta^\alpha)}$. 
If $g\in G \backslash G_0$, then   using  \eqref{vvd+} we compute the right-hand side of \eqref{vvd++} to be 
$$(-1)^{\vert x'\vert \vert x''\vert} (gx'')_{g(\beta^\alpha ) }  (gx')_{ g ( \alpha_\beta)}= (gx')_{ g ( \alpha_\beta)} (gx'')_{g(\beta^\alpha ) } .$$
In both cases, $$\bracket{g(a_\alpha)}{g(b_\beta)}=(gx')_{g(\alpha_\beta ) }  (gx'')_{ g (\beta^\alpha)}=g (x'_{ \alpha_\beta  } ) g( x''_{\beta^\alpha }) =g (x'_{ \alpha_\beta  }  x''_{\beta^\alpha })= g(\bracket{a_\alpha}{b_\beta}).$$

It is obvious   that $q (gz)=q(z)$ for all $g\in G$ and $z\in A_M$. The $G$-invariance of   the bracket $\bracket{-}{-}$ in $A_M$ implies that for any $x,y\in A_M$,
\begin{equation*}\label{qg} q(\{g x , y\})= q(g\{  x , g^{-1} y\})= q( \{  x , g^{-1} y\}). \end{equation*}
  Summing up over all $g\in G$, we obtain that 
$$ \sum_{g\in G}  \, q(\{g x , y\})= \sum_{g\in G}  \, q( \{  x , g  y\}) \in A^G_M .$$
Denote this sum  by $[x,y]$.
This defines a bilinear pairing $[-,-]:A_M\times A_M\to A^G_M$. The $n$-graded antisymmetry for $\bracket{-}{-}$ implies that
for   homogeneous $x,y \in A_M$,
$$[x,y]= \sum_{g\in G}  \, q(\{g x , y\}) =- (-1)^{\vert x\vert_n  \vert y\vert_n} \sum_{g\in G}  \, q(\{y, g x  \})=- (-1)^{\vert x\vert_n  \vert y\vert_n} [y,x].$$
The first Leibniz rule  for $\bracket{-}{-}$ implies that for any homogeneous $x,y,z\in A_M$,
$$[x,yz]=[x,y] q(z)+ (-1)^{\vert x\vert_n \vert y\vert} q(y) [x,z].$$
 Observe also that for any $h\in G$, $a\in A$, $\alpha\in M$, 
 $$[x, (ha)_{h\alpha}-a_\alpha]=  
 \sum_{g\in G}  \, q( \{  x ,  (gha)_{gh\alpha}  \}) -  \sum_{g\in G}  \, q( \{  x , (ga)_{g\alpha} \})=0.$$
 These two formulas imply that $[A_M, \Ker q]=0$. The antisymmetry of $[-,-]$ yields 
 $[\Ker q, A_M]=0$. Hence,  the pairing $[-,-]:A_M\times A_M\to A^G_M$ descends to an $n$-graded biderivation $ \bracket{-}{-}^G_v$ in $ A^G_M$ satisfying the conditions of the lemma.
 
 The  Jacobi form $\{-,-,-\}$ of $ \bracket{-}{-}^G_v$    can be  computed on the generators   using Lemma~\ref{bra}.    For   homogeneous $a,b,c\in A$ and any $\alpha, \beta, \gamma\in M$, we obtain 
  \begin{equation}\label{IMP1-ee}  \{q(a_{\alpha}), q(b_{\beta}) , q(c_{\gamma})\} \end{equation} 
   \begin{equation*} =  \sum_{g,h\in G}  q\big ( Q (ga,hb,c, g\alpha, h\beta, \gamma)  -R  (ga,hb,c, g\alpha, h\beta, \gamma)  \big )\end{equation*} where
$Q, R\in A_M$ are defined by \eqref{IMP1}, \eqref{IMP2}
Therefore, if $\double{-}{-}$ is   Gerstenhaber, then so is $ \bracket{-}{-}^G_v   $.
\end{proof}

\subsection{The unital case}\label{uniuni} Suppose  now that the graded $G$-algebra $A$ is unital  and the $G$-coalgebra $M$ is counital. It is understood that the action  of   $G$ on $A$   fixes  the unit $1_A$ and  the action  of   $G$ on   $M$ fixes  the counit $\varepsilon=\varepsilon_M$.
 We construct  a unital graded algebra $A_M^{G+}$ by  adjoining a two-sided unit $e$ to $A_M^G$ and quotienting the resulting algebra $\kk e \oplus A^G_M$ by the   relations  $\{(1_A)_{\alpha}=\varepsilon   ( \alpha ) e\}_{\alpha\in M}$.
 For any unital commutative graded algebra $B$, the bijection  \eqref{eq:adjunctionequi} induces  a natural bijection 
\begin{equation}\label{eq:adjunctionequinnn}
\Hom_{ \grCom^+ } (   A^{G+}_{M} , B)
\stackrel{\simeq}{\longrightarrow} \Hom_{G-\grAlg^+} (A,  H_M(B))
\end{equation}
 where    $\grCom^+$ is the category   of unital
commutative  graded algebras and  $G-\grAlg^+$ is the category of unital $G$-algebras and equivariant graded algebra homomorphisms carrying $1$ to $1$. The construction of $  A^{G+}_{M}$    obviously extends to a   functor $f\mapsto  f^{G+}_{M}:  G-\grAlg^+ \to {\grCom}^+$.

  We introduce an equivariant analogue $\mathcal E\subset A^{G+}_{M}$ of the algebra  \eqref{Afix+++++}. To this end, we 
 define a right action of $G$ on  $M^*$ by 
$(\varphi g)(\alpha)=\varphi (g \alpha)$ for   $\varphi\in M^*, g\in G , \alpha \in M$. Let ${{L}}$   be the set  of all $\varphi\in M^*$ such that 
$ \varphi g =\varphi  $ for all $g\in G_0 $ and 
$ \varphi g = - \varphi  $ for all $g\in G\backslash G_0 $. It is easy to see that  ${{L}}$ is a Lie subalgebra of the Lie algebra $\underline M^{* }$ and the 
  coderivations  $\{\delta_\varphi:M\to M \}_{\varphi\in {{L}} }$  are $G$-equivariant. The induced derivations  $\{\delta_\varphi\}_{\varphi\in {{L}} }$ of     $  A_M $ commute  with the action of $G$ on   $  A_M $ defined in the proof of Lemma \ref{brainv} and  induce  an action of ${{L}}$ on 
$  A_M^{G+}$  by derivations. Then \begin{equation*}\label{Afix+++++more} \mathcal E = \{ x\in A_M^{G+} \, \vert\, \delta_\varphi( x)=0 \,\, {\rm {for \,\,  all}} \,\, \varphi\in {{L}} \}\end{equation*}
is a   unital   graded  subalgebra of $A_M^{G+}$.

Suppose now that $G$ is finite, $A$ carries   an equivariant $n$-graded double bracket,   and $M$ carries an equivariant cyclic bilinear form $v$. The   bracket $\bracket{-}{-}^G_v$ in $A_M^G $      extends   to a   bracket
 in   $ \kk e\oplus    A_M$ annihilating   $e$   on the left and on the right. The latter bracket annihilates $(1_A)_{\alpha}-\varepsilon   ( \alpha ) e  $    for all $\alpha\in M$ and descends to an $n$-graded  biderivation $\bracket{-}{-}^{G+}_v$ in   $  A_M^{G+}$ invariant under the action of the Lie algebra $L$.  By definition, for any
  $a,b\in   A $, $\alpha, \beta \in M$,
\begin{equation*}\label{mainfinvE} \bracket{q^+(a_\alpha)}{q^+(b_\beta)}^{G+}_v= \sum_{g\in G}  \, q^+(\{(ga)_{g\alpha} , b_\beta\}_v) = \sum_{g\in G}  \, q^+(\{a_{\alpha} , (gb)_{g\beta}\}_v)   \end{equation*}
where $q^+$ is the natural projection $A_M\to  A_M^{G+}$.
 Moreover,  $\bracket{\mathcal E}{\mathcal E}^{G+}_v \subset \mathcal E$.
If $\double{-}{-}$ is   Gerstenhaber, then so is $\bracket{-}{-}^{G+}_v$. 

We   state equivariant versions of Theorems~\ref{mommap-} and \ref{quasi}.

\begin{theor}\label{mommap-E}  {\rm (Equivariant  Hamiltonian reduction)} Under the assumptions above, let  $B$ be a graded $G$-algebra  and let $p:A\to B$ be an equivariant algebra epimorphism whose kernel satisfies the same condition as in   Theorem~\ref{mommap-}. Then the bracket $\bracket{-}{-}^{G+}_v$ in ${\mathcal E}$ descends to an $n$-graded Leibniz bracket   in the graded algebra $p^{G+}_{M}({\mathcal E})\subset B^{G+}_{M}$.    If   $\double{-}{-} $ is Gerstenhaber, then so is  the latter bracket.
\end{theor}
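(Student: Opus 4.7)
The plan is to mimic the proof of Theorem~\ref{mommap-}, replacing the moment-map identity \eqref{momoweak} by its $G$-averaged counterpart. Let $J=\Ker(p^+_M)\subset A^+_M$ and $J^G=\Ker(p^{G+}_M)\subset A^{G+}_M$. Since $p^+_M$ is surjective and $G$-equivariant, an elementary diagram chase shows that $J^G$ equals the image of $J$ under the projection $A^+_M\to A^{G+}_M$, so $J^G$ is generated as a two-sided ideal of $A^{G+}_M$ by $\{q^+(\xi_\alpha):\xi\in\mathcal M,\,\alpha\in M\}$. Once I establish the key inclusion
$$\bracket{q^+(\xi_\alpha)}{\mathcal E}^{G+}_v\subset J^G\qquad\text{for every }\xi\in\mathcal M,\ \alpha\in M, \quad (*)$$
the formal argument of Theorem~\ref{mommap-} runs verbatim: the Leibniz rule upgrades $(*)$ to $\bracket{J^G}{\mathcal E}^{G+}_v\subset J^G$; combined with the inclusion $\bracket{\mathcal E}{\mathcal E}^{G+}_v\subset\mathcal E$ recorded in Section~\ref{uniuni} and the antisymmetry of $\bracket{-}{-}^{G+}_v$, this yields $\bracket{\mathcal E\cap J^G}{\mathcal E}^{G+}_v,\,\bracket{\mathcal E}{\mathcal E\cap J^G}^{G+}_v\subset\mathcal E\cap J^G$, so $\bracket{-}{-}^{G+}_v$ descends to an $n$-graded biderivation on $\mathcal E/(\mathcal E\cap J^G)\simeq p^{G+}_M(\mathcal E)$. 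The Gerstenhaber property transfers for free because $\bracket{-}{-}^{G+}_v$ is already Gerstenhaber on $A^{G+}_M$ whenever $\double{-}{-}$ is.

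To establish $(*)$, I first extend the hypothesis on $\mathcal M$ across the $G$-orbit. Writing $\sigma(g)=+1$ for $g\in G_0$ and $\sigma(g)=-1$ for $g\in G\setminus G_0$, the equivariance of $\double{-}{-}$ combined with the $G$-invariance of $\Ker p$ produces, for every $g\in G$, $\xi\in\mathcal M$, and $a\in A$,
$$\double{g\xi}{a}\equiv \sigma(g)\,k_\xi\,(a\otimes 1_A-1_A\otimes a)\pmod{A\otimes\Ker p+\Ker p\otimes A};$$
the sign for $g\notin G_0$ arises from $P_{21}(a\otimes 1_A-1_A\otimes a)=-(a\otimes 1_A-1_A\otimes a)$. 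Feeding this moment-map-like identity into the derivation of \eqref{momoweak} in Theorem~\ref{mommap-} (with scalar $\sigma(g)k_\xi$ replacing $k_\xi$) yields, for every $y\in A^+_M$,
$$\bracket{(g\xi)_{g\alpha}}{y}^+_v\equiv \sigma(g)\,k_\xi\,\delta_{\overline{g\alpha}}(y)\pmod J,$$
where the overbar denotes the cyclic structure $M\to M^*$ determined by $v$.

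Summing over $g\in G$ and projecting by $q^+$, the defining formula for $\bracket{-}{-}^{G+}_v$ from Section~\ref{uniuni} gives
$$\bracket{q^+(\xi_\alpha)}{q^+(y)}^{G+}_v\equiv k_\xi\,q^+\bigl(\delta_\varphi(y)\bigr)\pmod{J^G},\qquad \varphi:=\sum_{g\in G}\sigma(g)\,\overline{g\alpha}\in M^*.$$
The main obstacle, and the only step that genuinely uses the equivariance of $v$, is verifying that $\varphi\in{{L}}$. For this I compute, using $v(g\alpha,g'\beta)=v(g'^{-1}g\alpha,\beta)$ and the substitution $h=g'^{-1}g$, for every $g'\in G$ and $\beta\in M$:
$$(\varphi g')(\beta)=\sum_{g\in G}\sigma(g)\,v(g\alpha,g'\beta)=\sum_{h\in G}\sigma(g'h)\,v(h\alpha,\beta)=\sigma(g')\,\varphi(\beta),$$
which is precisely the defining condition for membership in ${{L}}$. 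Since every $y\in\mathcal E$ is annihilated by $\delta_\psi$ for all $\psi\in{{L}}$, we conclude $\delta_\varphi(y)=0$, whence $(*)$ holds and the proof is complete.
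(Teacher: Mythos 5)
Your proof is correct, and while it follows the paper's overall strategy ($G$-average the moment-map congruence of Theorem~\ref{mommap-}, land in the Lie algebra $L$, then run the formal descent verbatim), the implementation is dual to the paper's in a genuine way. The paper averages over the \emph{second} argument of the bracket: it keeps $\xi$ fixed, applies \eqref{momoweak} as is, and pushes the $G$-action through the inner coderivations via $\delta_\varphi(g\beta)=\pm g\,\delta_{\varphi g}(\beta)$, packaging the average as \eqref{elll} with $\ell(\varphi)=\sum_{g\in G_0}\varphi g-\sum_{g\in G\setminus G_0}\varphi g$; membership $\ell(\overline\alpha)\in L$ is then automatic from the formal covariance $(\ell(\varphi))h=\sigma(h)\,\ell(\varphi)$, so the equivariance of $v$ is never invoked inside the descent itself. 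You average over the \emph{first} argument instead, which forces you to extend the moment-map hypothesis across the orbit $\{g\xi\}$ --- your sign computation is right, using that $P_{21}$ negates $a\otimes 1_A-1_A\otimes a$, that $\Ker p$ is $G$-stable, and (tacitly, but necessarily) that the action fixes $1_A$ --- and then to verify $\varphi=\sum_g\sigma(g)\,\overline{g\alpha}\in L$ by hand, which is exactly where equivariance of $v$ enters for you. The two routes are reconciled by the identity $\overline\alpha\, g=\overline{g^{-1}\alpha}$ (itself a restatement of the equivariance of $v$), which shows your $\varphi$ equals the paper's $\ell(\overline\alpha)$, so the same element of $L$ controls both arguments; the paper's bookkeeping sits on the coderivation side, yours on the double-bracket side. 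Your version has the merit of making explicit a point the paper leaves implicit, namely that $J^G=\Ker p^{G+}_M$ is the image of $J=\Ker p^+_M$ under the projection and hence is generated by the elements $q^+(\xi_\alpha)$. One small notational repair: in your final step $y$ should be a lift in $A^+_M$ of an element of $\mathcal E$, and the conclusion $q^+(\delta_\varphi(y))=\delta_\varphi(q^+(y))=0$ needs the (easy, and available) fact that $\delta_\varphi$ commutes with $q^+$ for $\varphi\in L$, since the coderivations $\delta_\varphi$ with $\varphi\in L$ are $G$-equivariant; as written you apply $\delta_\varphi$ to an element of $\mathcal E$ and to its lift interchangeably, which is harmless but conflates the two levels.
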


\begin{proof}    
We define a map $\ell:  M^* \to L \subset    M^*$ by
  $${{\ell(\varphi)}}= \sum_{g\in G_0} { {\varphi g}  }-  \sum_{g\in G\backslash G_0}  { {\varphi g}   }. $$
  We claim that for any $\varphi\in M^*, g\in G , b\in A, \beta\in M$
\begin{equation} \label{elll} \sum_{g\in G}  \,   q^+ \delta_{\varphi} ( (gb)_{g\beta} ) =  q^+ \delta_{\ell({\varphi })} (b_\beta) .\end{equation} Indeed, it follows from the definitions that
 \begin{equation*} \label{momoE1} \delta_\varphi (g\beta)= \begin{cases}
g \delta_{\varphi g} (\beta)  & {\rm if}  \,\,  g\in G_0 \\
- g \delta_{\varphi g} (\beta)  & {\rm if}  \,\, g\in G\backslash G_0. 
\end{cases} \end{equation*}
Therefore
$$ \sum_{g\in G}  \,   q^+ \delta_{\varphi} ( (gb)_{g\beta} ) = 
 q^+( \sum_{g\in G}  (gb)_{\delta_{\varphi} (g\beta)})=
  q^+(\sum_{g\in G_0} (gb)_{g\delta_{{\varphi} g}  (\beta)}-  \sum_{g\in G\backslash G_0} (gb)_{g\delta_{{\varphi} g}  (\beta)}) $$
$$=
  q^+(\sum_{g\in G_0} b_{\delta_{{\varphi} g}  (\beta)}-  \sum_{g\in G\backslash G_0} b_{\delta_{{\varphi} g}  (\beta)}) =  q^+(b_{\delta_{{\ell({\varphi})}}(\beta)})   =  q^+ \delta_{\ell({\varphi })} (b_\beta)$$
    The rest of the argument
     follows the proof of Theorem~\ref{mommap-} with   \eqref{momo} replaced by  
 \begin{equation*} \label{momoE} \bracket{q^+({\xi}_\alpha)}{x}^{G+}_v= k q^+ \delta_{\ell({\overline  \alpha })} (x)  \end{equation*}
for any $x\in A_M^{G+}$.  We check this equality for $x=q^+(b_\beta)$ with $b\in A, \beta\in M$:
\begin{equation*}\bracket{q^+({\xi}_\alpha)}{q^+(b_\beta)}^{G+}_v=  \sum_{g\in G}  \, q^+(\{{\xi}_{\alpha} , (gb)_{g\beta}\}_v) \end{equation*} \begin{equation*} =   \sum_{g\in G}  \, k q^+ \delta_{\overline  \alpha} ( (gb)_{g\beta} )     =k q^+ \delta_{\ell({\overline  \alpha })} (b_\beta). \qedhere \end{equation*}
 \end{proof}   
  
 \begin{theor}\label{quasiE}   If, under the assumptions above, $A$ is concentratred in degree zero, $n=0$, and the double bracket in $ A $ is quasi-Poisson, then the  restriction of  the  
 bracket $\bracket{-}{-}_v^{G+} $   to   $\mathcal E \subset A_M^{G+}$    satisfies the   Jacobi identity and makes ${\mathcal E} $ into a Poisson algebra.
\end{theor}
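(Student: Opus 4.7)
The plan is to follow the strategy of the proof of Theorem~\ref{quasi}, with the $G$-averaging built into $\bracket{-}{-}^{G+}_v$ carefully tracked. Since $\bracket{-}{-}^{G+}_v$ is a biderivation, its Jacobi form $\{-,-,-\}^{G+}_v$ is a derivation in each variable (the analogue of \eqref{jjja}), so it suffices to establish the stronger vanishing $\{\mathcal E,\, A_M^{G+},\, A_M^{G+}\}^{G+}_v = 0$. Combined with the containment $\bracket{\mathcal E}{\mathcal E}^{G+}_v \subset \mathcal E$ already recorded in Section~\ref{uniuni}, this will give a Poisson bracket on $\mathcal E$.

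The first step is to express the equivariant Jacobi form on generators as a double sum of the non-equivariant triple bracket. From $\bracket{q^+(x)}{q^+(y)}^{G+}_v = \sum_{g\in G} q^+(\{gx, y\}_v)$, unfolding the three iterated brackets in $\{q^+(a_\alpha), q^+(b_\beta), q^+(c_\gamma)\}^{G+}_v$ produces three sums indexed by $G\times G$. Invoking the invariance $\bracket{gu}{gv}_v = g\bracket{u}{v}_v$ from Lemma~\ref{brainv} together with $q^+ \circ g = q^+$ and re-indexing of the summation variables, these three sums assemble exactly into the cyclic decomposition of the triple bracket $\{(ha)_{h\alpha},\, (gb)_{g\beta},\, c_\gamma\}_v$, giving
\[
\{q^+(a_\alpha), q^+(b_\beta), q^+(c_\gamma)\}^{G+}_v \;=\; \sum_{g,h\in G} q^+\bigl(\{(ha)_{h\alpha},\, (gb)_{g\beta},\, c_\gamma\}_v\bigr).
\]

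The second step substitutes formulas \eqref{newQ}--\eqref{newR} from the proof of Theorem~\ref{quasi} into each summand. The building blocks have the form $\delta_\varphi((ha)_{h\alpha})$, and the computation in the proof of Theorem~\ref{mommap-E} gives $\delta_\varphi(h\alpha) = \varepsilon(h)\, h\, \delta_{\varphi h}(\alpha)$, where $\varepsilon(h)=1$ for $h\in G_0$ and $\varepsilon(h)=-1$ for $h\in G\setminus G_0$. Applying $q^+$ kills the outer $h$, and summing over $h$ with this sign converts $\delta_{\overline{\gamma^i}\, h}$ into $\delta_{\ell(\overline{\gamma^i})}$, with $\ell(\overline{\gamma^i}) \in L$. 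Since each $\delta_{\ell(\overline{\gamma^i})}$ descends to a derivation of $A_M^{G+}$, the double sum telescopes to
\begin{align*}
\{q^+(a_\alpha),q^+(b_\beta),q^+(c_\gamma)\}^{G+}_v &= \bigl(\delta_{\ell(\overline{\gamma^2})}(x)\,\delta_{\ell(\overline{\gamma^3})}(y) - \delta_{\ell(\overline{\gamma^3})}(x)\,\delta_{\ell(\overline{\gamma^2})}(y)\bigr)\,q^+(c_{\gamma^1}) \\
&\quad + \bigl(\delta_{\ell(\overline{\gamma^2})}(x)\,\delta_{\ell(\overline{\gamma^1})}(y) - \delta_{\ell(\overline{\gamma^1})}(x)\,\delta_{\ell(\overline{\gamma^2})}(y)\bigr)\,q^+(c_{\gamma^3}),
\end{align*}
where $x=q^+(a_\alpha)$ and $y=q^+(b_\beta)$. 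By the derivation property of the Jacobi form in the first two slots, the same identity extends to arbitrary $x,y \in A_M^{G+}$. For $x\in\mathcal E$ every factor $\delta_{\ell(\overline{\gamma^i})}(x)$ vanishes by definition of $\mathcal E$, so $\{x,y,q^+(c_\gamma)\}^{G+}_v = 0$; derivation in the third slot then forces $\{x,y,z\}^{G+}_v = 0$ for all $y,z \in A_M^{G+}$.

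The main technical hurdle will lie in step one: the three iterated brackets unfold into distinct $G\times G$ sums, and matching them to the cyclic pieces of $\{-,-,-\}_v$ requires a careful change of summation variables exploiting both the $G$-invariance of the double bracket and the triviality of the $G$-action after $q^+$. The sign $\varepsilon(h)$ forced by the antiautomorphism action of $G\setminus G_0$ meshes precisely with the definition of $L$, and that meshing is what makes the telescoping of step two land in derivations that annihilate $\mathcal E$.
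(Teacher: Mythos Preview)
Your proposal is correct and follows essentially the same approach as the paper's proof. The paper's argument is terse---it just says to use the expansion \eqref{IMP1-ee} with $q^+$, substitute \eqref{newQ}--\eqref{newR}, apply \eqref{elll} to perform the double sum over $G\times G$, and then finish as in Theorem~\ref{quasi}---and you have unpacked precisely these steps, including the key observation that the sign $\varepsilon(h)$ arising from the antiautomorphism action converts the sum $\sum_h \delta_{\overline{\gamma^i}}((hb)_{h\beta})$ into $\delta_{\ell(\overline{\gamma^i})}$ acting on $A_M^{G+}$, which then annihilates $\mathcal E$.
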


\begin{proof} 
To compute the Jacobi form of the bracket $\bracket{-}{-}_v^{G+} $  we use the expansion \eqref{IMP1-ee} with $q$ replaced by $q^+$.  To compute the terms of this expansion  we use   \eqref{newQ}, \eqref{newR} and to compute their sum over all $g,h\in G$ we use  \eqref{elll}. 
The rest of the arguments  is as in the proof of Theorem~\ref{quasi}.
  \end{proof}
 
\subsection{The involutive case}  We focus now on the case  where $G$ is a cyclic group of order two  with generator $\iota$ and   $G_0\subset G$ is the trivial subgroup. To turn a unital graded algebra $A$ into a $G$-algebra, one needs only to fix an involutive graded antiautomorphism  $\iota$  of $A$ such that $\iota(1_A)=1_A$.  An  $n$-graded  double bracket $\double{-}{-}$  in $A$ is equivariant if and only if for all   $a,b \in A$,
\begin{equation}\label{iotaio}
\double{\iota(a) }{ \iota(b)  } =
 P_{21} (\iota \otimes \iota)   ( \double{a}{b})  .
\end{equation}
Similarly, to   turn a counital coalgebra $M$ into a $G$-coalgebra, one needs to fix an involutive  antiautomorphism $\iota$ of $M$ such that $\varepsilon_M \iota=\varepsilon_M$. A cyclic bilinear form $v$ on $M$ is equivariant if and only if $v(\iota \otimes \iota)=v: M\otimes M\to \kk$.
By Section \ref{uniuni}, an equivariant $n$-graded  double   bracket $\double{-}{-}$  in $A$ and an equivariant  cyclic bilinear form $v$ on $M$ induce    an $n$-graded  biderivation $\bracket{-}{-}_v^{G+}$ in $A^{G+}_M$.  


Suppose   that $A=\kk[\pi]$ is the  group algebra  of a group $\pi$. We treat  $A$  as a graded algebra concentrated in degree zero and equip it with the   involutive    antiautomorphism $\iota$ inverting all elements of $\pi$. 
Consider a counital $G$-coalgebra $(M, \iota:M\to M)$ whose underlying module is free of finite rank.  For any  (non-graded) unital algebra $B$,   the convolution  algebra $H_M(B)=B\otimes M^*$ is a unital $G$-algebra  with involutive antiautomorphism $\id_B \otimes \iota^*$. We  identify the  set $\Hom_{G-\grAlg^+} (A,  H_M(B))$   with the set   of group homomorphisms   from  $ \pi   $  to     the group $U_\iota (B\otimes M^*)$ consisting of all invertible $u\in B\otimes M^*$ such that   $u^{-1} = (\id_B \otimes \iota^*) (u)$.   The bijection \eqref{eq:adjunctionequinnn} exhibits   $A^{G+}_M$ as the coordinate algebra of the functor $B\mapsto  \Hom (\pi, U_\iota (B\otimes M^*))$ where $B$ runs over unital commutative  algebras. 

For example,   consider a symmetric Frobenius algebra ${{{{\mathcal A}}}} $ equipped with  an involutive algebra automorphism   $\Delta$ preserving the Frobenius pairing.   
Pick an invertible   matrix $J\in \Mat_N({{{{\mathcal A}}}})$ with $N\geq 1$ such  that 
 $J^t=\Delta (J)$ where $t$ is the  matrix transposition and $\Delta $ applies to matrices entry-wise. (One can take $J$ to be the unit matrix or, more generally, a diagonal matrix whose   diagonal entries are invertible and fixed by $\Delta$.)  The    antiautomorphism $ X \mapsto     J\Delta (X^t) J^{-1}$  of 
  $\Mat_N({{{{\mathcal A}}}} )$   induces an involutive coalgebra  antiautomorphism $\iota$ of 
  the coalgebra  $M=(\Mat_N({{{{\mathcal A}}}}))^*$  and  makes $M$ into a counital $G$-colagebra. 
  The  cyclic   form $v_N$ on  $M $  defined in Example \ref{exam}.5 is equivariant   for any choice of $J$. This form  can be used to derive an $n$-graded  biderivation   in $A^{G+}_M$ from an equivariant double bracket in $A$.  
 Note that for a   unital  commutative algebra $B$, the group $U_\iota (B\otimes M^*)\subset GL_N(B\otimes {{{{\mathcal A}}}})$ consists  of all invertible $(N\times N)$-matrices $X$ over the ring $R_B=B\otimes {{{{\mathcal A}}}}$ such that   $  J=  X
 J \Delta (X^t ) $.  The group $U_\iota (B\otimes M^*)$  is nothing but   the group of automorphisms of the  free $R_B$-module $(R_B)^N$ of rank $N$ preserving the $R_B$-valued  sesquilinear form determined by   $J$. For $\Delta=\id_{\mathcal A}$, we recover  the group of invertible $(N\times N)$-matrices $X$ over  $R_B $ such that   $  J=  X
 J X^t   $, i.e.,   the group of automorphisms of the module  $(R_B)^N$  preserving the $R_B$-valued  bilinear form determined by   $J$.

    These constructions apply   to the fundamental group  $\pi $  of an oriented surface    with base point  in the boundary. Indeed, the   double quasi-Poisson bracket in $A=\kk[\pi]$ constructed in \cite{MT2}  satisfies   \eqref{iotaio}.   This bracket and the cyclic form $v_N$ induce    a   biderivation  in $A^{G+}_M$ which restricts to a Poisson bracket in $\mathcal E\subset A^{G+}_M$. 
As in Section~\ref{Group algebras}, the equivariant Hamiltonian reduction  yields a Poisson bracket in algebras associated with oriented surfaces with empty boundary.

\end{document}